\numberwithin{equation}{section}
\newtheorem{theorem}{Theorem}[section]
\newtheorem{proposition}[theorem]{Proposition}
\newtheorem{lemma}[theorem]{Lemma}
\newtheorem{corollary}[theorem]{Corollary}
\theoremstyle{definition}
\newtheorem{definition}[theorem]{Definition}
\newtheorem{remark}[theorem]{Remark}
\renewcommand{\Im}{\mathrm{Im}}
\renewcommand{\Re}{\mathrm{Re}}
\newcommand{\Dom}{\mathrm{Dom}}
\newcommand{\Hei}{\mathbb{H}}
\newcommand{\sign}{\mathrm{sign}}
\newcommand{\N}{\mathbb{N}}
\newcommand{\Z}{\mathbb{Z}}
\newcommand{\R}{\mathbb{R}}
\newcommand{\C}{\mathbb{C}}
\DeclareMathAlphabet{\mathpzc}{OT1}{pzc}{m}{en}
\DeclarePairedDelimiter{\abs}{\lvert}{\rvert}
\lbrace\begin{array}{@{}l@{}}}%
\newcommand*{\mint}[1]{%
	\mint@l{#1}{}%
}
\newcommand*{\mint@l}[2]{%
	\@ifnextchar\limits{%
		\mint@l{#1}%
	}{%
	\@ifnextchar\nolimits{%
		\mint@l{#1}%
	}{%
	\@ifnextchar\displaylimits{%
		\mint@l{#1}%
	}{%
	\mint@s{#2}{#1}%
}%
}%
}%
}
\newcommand*{\mint@s}[2]{%
	\@ifnextchar_{%
		\mint@sub{#1}{#2}%
	}{%
	\@ifnextchar^{%
		\mint@sup{#1}{#2}%
	}{%
	\mint@{#1}{#2}{}{}%
}%
}%
}
\def\mint@sub#1#2_#3{%
	\@ifnextchar^{%
		\mint@sub@sup{#1}{#2}{#3}%
	}{%
	\mint@{#1}{#2}{#3}{}%
}%
}
\def\mint@sup#1#2^#3{%
	\@ifnextchar_{%
		\mint@sub@sup{#1}{#2}{#3}%
	}{%
	\mint@{#1}{#2}{}{#3}%
}%
}
\def\mint@sub@sup#1#2#3^#4{%
	\mint@{#1}{#2}{#3}{#4}%
}
\def\mint@sup@sub#1#2#3_#4{%
	\mint@{#1}{#2}{#4}{#3}%
}
\newcommand*{\mint@}[4]{%
	\mathop{}%
	\mkern-\thinmuskip
	\mathchoice{%
		\mint@@{#1}{#2}{#3}{#4}%
		\displaystyle\textstyle\scriptstyle
	}{%
	\mint@@{#1}{#2}{#3}{#4}%
	\textstyle\scriptstyle\scriptstyle
}{%
\mint@@{#1}{#2}{#3}{#4}%
\scriptstyle\scriptscriptstyle\scriptscriptstyle
}{%
\mint@@{#1}{#2}{#3}{#4}%
\scriptscriptstyle\scriptscriptstyle\scriptscriptstyle
}%
\mkern-\thinmuskip
\int#1%
\ifx\\#3\\\else_{#3}\fi
\ifx\\#4\\\else^{#4}\fi  
}
\newcommand*{\mint@@}[7]{%
	\begingroup
	\sbox0{$#5\int\m@th$}%
	\sbox2{$#5\int_{}\m@th$}%
	\dimen2=\wd0 %
	\let\mint@limits=#1\relax
	\ifx\mint@limits\relax
	\sbox4{$#5\int_{\kern1sp}^{\kern1sp}\m@th$}%
	\ifdim\wd4>\wd2 %
	\let\mint@limits=\nolimits
	\else
	\let\mint@limits=\limits
	\fi
	\fi
	\ifx\mint@limits\displaylimits
	\ifx#5\displaystyle
	\let\mint@limits=\limits
	\fi
	\fi
	\ifx\mint@limits\limits
	\sbox0{$#7#3\m@th$}%
	\sbox2{$#7#4\m@th$}%
	\ifdim\wd0>\dimen2 %
	\dimen2=\wd0 %
	\fi
	\ifdim\wd2>\dimen2 %
	\dimen2=\wd2 %
	\fi
	\fi
	\rlap{%
		$#5%
		\vcenter{%
			\hbox to\dimen2{%
				\hss
				$#6{#2}\m@th$%
				\hss
			}%
		}%
		$%
	}%
	\endgroup
}
\def\overbracket#1{\mathop{\vbox{\ialign{##\crcr\noalign{\kern3\p@}
				\downbracketfill\crcr\noalign{\kern3\p@\nointerlineskip}
				$\hfil\displaystyle{#1}\hfil$\crcr}}}\limits}
\def\underbracket#1{\mathop{\vtop{\ialign{##\crcr
				$\hfil\displaystyle{#1}\hfil$\crcr\noalign{\kern3\p@\nointerlineskip}
				\upbracketfill\crcr\noalign{\kern3\p@}}}}\limits}
\def\overparenthesis#1{\mathop{\vbox{\ialign{##\crcr\noalign{\kern3\p@}
				\downparenthfill\crcr\noalign{\kern3\p@\nointerlineskip}
				$\hfil\displaystyle{#1}\hfil$\crcr}}}\limits}
\def\underparenthesis#1{\mathop{\vtop{\ialign{##\crcr
				$\hfil\displaystyle{#1}\hfil$\crcr\noalign{\kern3\p@\nointerlineskip}
				\upparenthfill\crcr\noalign{\kern3\p@}}}}\limits}
\def\downparenthfill{$\m@th\braceld\leaders\vrule\hfill\bracerd$}
\def\upparenthfill{$\m@th\bracelu\leaders\vrule\hfill\braceru$}
\def\upbracketfill{$\m@th\makesm@sh{\llap{\vrule\@height3\p@\@width.7\p@}}%
	\leaders\vrule\@height.7\p@\hfill
	\makesm@sh{\rlap{\vrule\@height3\p@\@width.7\p@}}$}
\def\downbracketfill{$\m@th
	\makesm@sh{\llap{\vrule\@height.7\p@\@depth2.3\p@\@width.7\p@}}%
	\leaders\vrule\@height.7\p@\hfill
	\makesm@sh{\rlap{\vrule\@height.7\p@\@depth2.3\p@\@width.7\p@}}$}
\newcommand{\dd}{\mathrm{d}}
\newcommand{\res}[2]{ \mathrm{Res}\left(#1,#2\right)}
\newcommand{\Ec}{\mathcal{E}}
\newcommand{\Hc}{\mathcal{H}}
\renewcommand{\phi}{\varphi}
\begin{document}
\title[Asymptotics for the Heat Kernel on H-Type Groups]{Asymptotics for the Heat Kernel on H-Type Groups}
\author[Bruno]{Tommaso Bruno}
\author[Calzi]{Mattia Calzi}

\address{Tommaso Bruno \\Università degli Studi di Genova\\ Dipartimento di Matematica\\ Via Dodecaneso\\ 35 16146 Genova\\ Italy }
\email{brunot@dima.unige.it}

\address{Mattia Calzi \\Scuola Normale Superiore\\ Piazza dei Cavalieri \\ 7 56126 Pisa\\ Italy }
\email{mattia.calzi@sns.it}

\maketitle
\begin{small}
\section*{Abstract}
We give sharp asymptotic estimates at infinity of all radial partial derivatives of the heat kernel on H-type groups. As an application, we give a new proof of the discreteness of the spectrum of some natural sub-Riemannian Ornstein-Uhlenbeck operators on these groups.
\end{small}

\bigskip

\section{Introduction}
\label{sec:asyintro}
Estimates at infinity for the heat kernel on the Heisenberg group or, more generally, H-type groups have attracted a lot of interest in the last decades (see, e.g.,~\cite{Gaveau, HueberMuller, Beals, Eldredge, Li1, Li3}). In the context of H-type groups, in particular, some results were recently  obtained by Eldredge~\cite{Eldredge} and Li~\cite{Li3} independently. In~\cite{Eldredge}, Eldredge provides precise upper and lower bounds for the heat kernel $p_s$ and its horizontal gradient $\nabla_\Hc p_s$. In~\cite{Li3}, Li provides asymptotic estimates for the heat kernel $p_s$, as well as upper bounds for all its derivatives. Nevertheless, to the best of our knowledge, sharp asymptotic estimates at infinity for the derivatives of $p_s$ are still missing. In this paper we address this problem by providing asymptotic expansions at infinity of the heat kernel and of all its derivatives.

Let $G$ be an H-type group identified with $\R^{2 n}\times \R^m$ via the exponential map, and denote by $(x,t)$  its generic element, where $x\in \R^{2n}$ and $t\in \R^m$. It is well known that the heat kernel $p_s$ is a function of $R\coloneqq \abs{x}^2/4$ and $|t|$. Outside the region $\{(x,t)\in G\colon t=0\}$, \emph{any} derivative of $p_s(x,t)$ can {thus} be written as a finite linear combination with smooth coefficients of the functions 
\[p_{s,k_1,k_2}(x,t)=\frac{\partial^{k_1}}{\partial R^{k_1}}\frac{\partial^{k_2}}{\partial |t|^{k_2}} p_{s}(x,t),\] for {suitable} $k_1,k_2 \in \N$.  We call {these functions} \emph{radial partial derivatives} of $p_s$. Thus, everything can be reduced to finding asymptotic estimates at infinity of $p_{s,k_1,k_2}$ for every $k_1,k_2 \in \N$; these will yield asymptotic estimates of every desired derivative of $p_s$.

We divide the paper in five sections. In the next section we fix the notation and recall some preliminary facts on H-type groups and the method of stationary phase. In the central Sections \ref{sec:m1} and \ref{sec:mgeneral} the functions $p_{s,k_1,k_2}$ are studied. In Section \ref{sec:m1} we provide asymptotic estimates for $p_{s,k_1,k_2}$ in the case $m=1$, namely when $G$ is a Heisenberg group; in Section \ref{sec:mgeneral} we extend the results of Section \ref{sec:m1} to the more general class of H-type groups. This is done via a reduction to the case $m=1$ when $m$ is odd; a descent method is then applied in order to cover the case $m$ even. The preliminary study of the case $m=1$ is necessary except in a single case, for which the general case could be treated directly; nevertheless, we include both proofs for the sake of clarity. As the reader may see, our Theorem~\ref{HstimeI} and Corollary~\ref{Hstime234} cover the cases of~\cite[Theorems 1.4 and 1.5]{Li3} and~\cite[Theorem 4.2]{Eldredge} as particular instances, and imply~\cite[Theorems 1.1 and 1.2]{Li3} and~\cite[Theorem 4.4]{Eldredge} as easy corollaries, by means of formula~\eqref{gradient}. In Section~\ref{OU} we show an interesting application of our estimates, providing a different proof of a theorem due to Inglis~\cite{Inglis} which concerns the discreteness of the spectrum of some Ornstein-Uhlenbeck operators on $G$.

We emphasize that our methods are strongly related to those employed by Gaveau~\cite{Gaveau} and then Hueber and M\"uller~\cite{HueberMuller} in the case of the Heisenberg group; some ideas are also taken from the work of Eldredge~\cite{Eldredge}. In particular, we borrow from~\cite{Gaveau} and~\cite{HueberMuller} the use of the method of \textit{stationary phase}, though in a stronger form provided by H\"ormander~\cite{Hormander2}.

\section{Preliminaries}\label{sec:prel}
\subsection{H-type Groups} 
An H-type group $G$ is a 2-step stratified group whose Lie algebra $\mathfrak{g}$ is endowed with an inner product $(\, \cdot\,,\, \cdot \,)$ such that
\begin{itemize}
	\item[1.] if $\mathfrak{z}$ is the centre of $\mathfrak{g}$ and $\mathfrak{h}=\mathfrak{z}^\perp$, then $[\mathfrak{h},\mathfrak{h}]=\mathfrak{z}$;
	\item[2.] for every $Z \in \mathfrak{z}$, the map $J_Z\colon \mathfrak{h} \to \mathfrak{h}$, 
	\[(J_Z X,Y)=( Z, [X,Y]) \qquad \forall X,Y\in \mathfrak{h},\]
	is an isometry whenever $(Z, Z)=1$.
\end{itemize}
In particular, $\mathfrak{g}$ stratifies as $\mathfrak{h} \oplus \mathfrak{z}$. It is very convenient, however, to realize an H-type group $G$ as $\R^{2 n}\times \R^m $, for some $n,m\in \N$, via the exponential map. More precisely, we shall denote by $(x,t)$ the elements of $G$, where  $x\in \R^{2n}$ and $t \in \R^m$. We denote by  $(e_1,\dots,e_{2n})$ and $(u_1,\dots,u_m)$ the standard bases of $\R^{2 n}$ and $\R^{m}$ respectively. Under this identification, the Haar measure $d y$ is the Lebesgue measure. The maps $\{J_Z \colon Z\in \mathfrak{z}\}$ are identified with $2n \times 2n$ skew symmetric matrices $\{J_t \colon t\in \R^m\}$ which are orthogonal whenever $|t|=1$. This identification endows $\R^{2n}\times \R^{m}$ with the group law
\[(x,t)\cdot (x',t') = \left(x+x',t+t' + \frac{1}{2} \sum_{k=1}^m (J_{u_k}x,x') u_k\right).\]
A basis of left-invariant vector fields for $\mathfrak{g}$ is
\[X_j = \partial_{x_j} + \frac{1}{2}\sum_{k=1}^{m}( J_{u_k} x,e_j) \partial_{t_k},\quad j=1,\dots,2n; \qquad T_k = \partial_{t_k}, \quad k=1,\dots,m.\]
In particular, $(X_j)_{1\leq j \leq 2n}$ is a basis for the first layer $\mathfrak{h}\cong \R^{2 n}$. If $f$ is a sufficiently {smooth} function on $G$, its horizontal gradient will be the vector field $\nabla_\mathcal{H} f \coloneqq \sum_{j=1}^{2n} (X_j f)X_j$, and its sub-Laplacian $\mathcal{L}f\coloneqq -\sum_{j=1}^{2n} X_j^2f$. We refer the reader to~\cite{Bonfiglioli} for further details.

\subsection{The Heat Kernel} On an H-type group $G \simeq \R^{2 n}\times \R^m $ the heat kernel $(p_s)_{s>0}$ has the form
\begin{equation}
	p_s(x,t)= \frac{1}{(4\pi)^n (2\pi)^m s^{n+m}}\int_{\R^m}e^{\frac{i}{s}(\lambda, t)-\frac{\abs*{x}^2}{4s}|\lambda|\coth(|\lambda|)} \left(\frac{ |\lambda|}{\sinh|\lambda|}\right)^n\,\dd\lambda,
	\label{HeatKernel}
\end{equation}
for every $s>0$ and every $(x,t)\in G$ (see~\cite{Gaveau} or~\cite{Hulanicki} for the Heisenberg groups, \cite{Randall} or \cite{Yang} for H-type groups). {For the sake of clarity, we shall sometimes stress the dependence of $p_s$ on the dimension $m$ of the centre of $G$ by writing $p^{(m)}_s$ instead of $p_s$.} 

We begin by writing the heat kernel \eqref{HeatKernel} in a more convenient form. Let $\mathcal{R}$ be an isometry such that $\mathcal{R} t = |t|u_1$, where $u_1$ is the first element of the canonical basis\footnote{The choice of $u_1$ is  actually irrelevant.} of the centre of $G$, namely $\R^m$. Then make the change of variables $\lambda \mapsto \mathcal{R}^{-1} \lambda$ in \eqref{HeatKernel}, which gives
\begin{equation}\label{HeatKernelMod}
	p_s(x,t)= \frac{1}{(4\pi)^n (2\pi)^m s^{n+m}}\int_{\R^m}e^{\frac{i}{s}( \lambda, u_1) \abs{t}-\frac{\abs{x}^2}{4s}|\lambda|\coth(|\lambda|)} \left(\frac{ |\lambda|}{\sinh|\lambda|}\right)^n\,\dd\lambda.
\end{equation}
It is now more evident that $p_s$ depends only on $\abs{x}$ and $\abs{t}$. This leads us to the following definition.

\begin{definition}\label{defp1k1k2} Let $R=\frac{\abs{x}^2}{4}$. 
	For all $s>0$ and for all $k_1,k_2\in \mathbb{N}$, define
	\begin{equation}\label{pk1k2} 
		\begin{split}
			p_{s,k_1,k_2}(x,t)&\coloneqq\frac{\partial^{k_1}}{\partial R^{k_1}} \frac{\partial^{k_2}}{\partial |t|^{k_2}}p_{s}(x,t)= \frac{ (-1)^{k_1}i^{k_2}}{(4\pi)^{n}(2\pi)^{m} s^{n+m+k_1+k_2}} \\&\quad\quad \times \int_{\mathbb{R}^m} e^{\frac{i}{s}|t|( \lambda, u_1) -\frac{\abs{x}^2}{4 s}|\lambda| \coth|\lambda|} \frac{\abs{\lambda}^{n+k_1}\cosh(|\lambda|)^{k_1}}{\sinh(|\lambda|)^{n+k_1}}(\lambda,u_1)^{k_2}\,\dd \lambda.
		\end{split}
	\end{equation}
\end{definition}

Notice that $p_s$ is a smooth function of $R$ and $\abs{t}$ {by formula~\eqref{HeatKernelMod}}, so that the definition of $p_{s,k_1,k_2}$ is meaningful on the whole of $G$. Moreover, consider a differential operator on $G$ of the form
\[
X^\gamma = \frac{\partial^{\abs{\gamma}}}{\partial x^{\gamma_1} \partial t^{\gamma_2}}
\]
for some $\gamma= (\gamma_1,\gamma_2)\in \N^{2n}\times \N^m$. By means of Faà di Bruno's formula, the function $X^\gamma p_s$ can be written {on $\{t\neq 0\}$} as a finite linear combination with smooth coefficients of the functions $p_{s,k_1,k_2}$, for suitable $k_1$ and $k_2$. Since $X^\gamma p_s$ is uniformly continuous, the value of $X^\gamma p_s(x,0)$ can then be recovered by continuity uniformly in $x\in \R^{2n}$. Therefore, one can obtain asymptotic estimates for $X^\gamma p_s$ by combining appropriately some given estimates of $p_{s,k_1,k_2}$ (see also Remark~\ref{rem:sharpness}). We shall see an application of this in Section~\ref{OU}.

Observe that it will be sufficient to study $p_{1,k_1,k_2}$, since
\[
p_{s,k_1,k_2}(x,t)= \frac{1}{s^{n+m+k_1+k_2}} p_{1,k_1,k_2}\left(\frac{x}{\sqrt{s}},\frac{t}{s}\right)
\]
for every $s>0$, $k_1,\,k_2 \in \N$ and  $(x,t)\in G$. Hence, we shall focus only on $p_{1,k_1,k_2}$.
Moreover, from now on we shall fix the integers $k_1,k_2\geq 0$. {Of course, the choice $k_1=k_2=0$ gives the heat kernel~$p_s$.}

\begin{remark}
	It is well known (see~\cite{Eckmann} or~\cite[Remark 3.6.7]{Bonfiglioli}) that there exist $n$ and $m$ for which $\R^{2n} \times \R^m$ cannot represent any H-type group. Nevertheless,~\eqref{HeatKernel} and hence~\eqref{pk1k2} make sense for every positive $n,m\in \N$, and for such $n$ and $m$ we shall then study $p_{s,k_1,k_2}$.
\end{remark}

\begin{definition}\label{Romegadeltakappa}(cf.~\cite{HueberMuller})
	For every $(x,t) \in G$, define\footnote{Actually, $\omega$ is defined for $x\neq0$ and $\delta$ for $t\neq 0$, but we shall not recall it again in the following.} \[\omega \coloneqq \frac{\abs*{t}}{R}, \qquad \delta \coloneqq \sqrt{\frac{R}{\pi \abs*{t}}},\qquad \kappa \coloneqq  2\sqrt{\pi \abs*{t}R}.\]
\end{definition}
We shall split the asymptotic condition  $(x,t)\to \infty$ into four cases, some of which depend on an arbitrary constant $C>1$. In particular, the first one covers the case $\abs{t}/\abs{x}^2$ bounded, while the other three are a suitable splitting of the case $\abs{t}/\abs{x}^2 \to \infty$.

\begin{multicols}{2}{
	\begin{itemize}
	\setlength{\itemindent}{0.2in}
			\item[\bf{I.}] $(x,t)\to \infty$ and $\omega= 4\abs{t}/\abs{x}^2 \leq$~$C$;
			\smallskip
			\item[\bf{II.}] $\delta \rightarrow 0^+$ and $\kappa \rightarrow +\infty$ ;
			\bigskip	
			\item[\bf{III.}] $\delta\rightarrow 0^+$ and {$\kappa\in \left[1/C, C\right]$};
			\bigskip	
			\item[\bf{IV.}] $\kappa\to 0^+$ and $\abs{t}\to +\infty$.
			\bigskip
		\end{itemize}
		\columnbreak
		
			\begin{tikzpicture}[line cap=round,line join=round,>=triangle 45,x=1.0cm,y=1.0cm, scale=0.60]
			\clip(-2,-0.2) rectangle (7,6.1);
			\draw [->] (0,0) -- (0,6.1);
			\draw [->] (0,0) -- (6,0);
			\draw[smooth,samples=100,domain=0.0:5.0, variable=\t] plot({\t,\t^2/4.5});
			\draw[smooth,samples=100,domain=0.43:5.0, variable=\t] plot({\t},{1/\t^2});
			\draw[smooth,samples=100,domain=1.35:5.0, variable=\t] plot({\t},{10/\t^2});
			\draw (-0.1,1.1) node[anchor=north west] {\footnotesize{\textbf{IV}}};
			\draw (0.84,2.4) node[anchor=north west] {\footnotesize{\textbf{III}}};
			\draw (2.53,3.5) node[anchor=north west] {\footnotesize{\textbf{II}}};
			\draw (4.04,2.22) node[anchor=north west] {\footnotesize{\textbf{I}}};
			\draw (6,0.6) node[anchor=north west] {$\abs{x}$};
			\draw (-1,5.8) node[anchor=north west] {$\abs*{t}$};
			\end{tikzpicture}
	}\end{multicols}
	We shall {describe} the asymptotic {behaviour} of $p_{1,k_1,k_2}$ in each of these four cases. The first two will both need the method of stationary phase (Theorem \ref{stationaryphase} below), while the other two can be treated through Taylor expansions.
	
	In order to simplify the notation, we give some definitions.
	
	\begin{definition}
		Define the function $\theta \colon (-\pi, \pi) \rightarrow \R$ by
		\[\theta(\lambda)\coloneqq \begin{cases}
		\frac{2\lambda-\sin(2\lambda)}{2\sin^2(\lambda)}, 	& \text{if $\lambda \neq 0$,}\\
		0, & \text{if $\lambda= 0$.}\end{cases}\]
		
	\end{definition}
	
	\begin{lemma}\emph{\cite[§ 3, Lemma 3]{Gaveau}} \label{stime:lem:10}
		$\theta$ is an odd, strictly increasing analytic diffeomorphism between $(-\pi,\pi)$ and $\R$.
	\end{lemma}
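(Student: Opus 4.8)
The plan is to establish the three asserted properties of $\theta$ by elementary real-analytic arguments. First I would observe that $\theta$ is manifestly analytic on $(-\pi,0)\cup(0,\pi)$, being a quotient of analytic functions whose denominator $2\sin^2(\lambda)$ vanishes only at $\lambda=0$; to handle the point $\lambda=0$ I would compute the Taylor expansions $2\lambda-\sin(2\lambda)=\tfrac{4}{3}\lambda^3+O(\lambda^5)$ and $2\sin^2(\lambda)=2\lambda^2+O(\lambda^4)$ near the origin, so that $\theta(\lambda)=\tfrac{2}{3}\lambda+O(\lambda^3)\to 0=\theta(0)$, and more precisely $\theta$ extends to an analytic (indeed odd) function on all of $(-\pi,\pi)$ with $\theta'(0)=\tfrac23$. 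Oddness on the whole interval follows because $2\lambda-\sin(2\lambda)$ is odd and $2\sin^2(\lambda)$ is even, so the quotient is odd away from $0$, and by continuity at $0$.

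The core of the argument is strict monotonicity. For $\lambda\in(0,\pi)$ I would differentiate: writing $N(\lambda)=2\lambda-\sin(2\lambda)$ and $D(\lambda)=2\sin^2\lambda=1-\cos(2\lambda)$, one has $N'=2-2\cos(2\lambda)=2D$ and $D'=2\sin(2\lambda)$, hence
\[
\theta'(\lambda)=\frac{N'D-ND'}{D^2}=\frac{2D^2-N\,D'}{D^2}=2-\frac{(2\lambda-\sin 2\lambda)\,2\sin 2\lambda}{(1-\cos 2\lambda)^2}.
\]
Setting $u=2\lambda\in(0,2\pi)$, positivity of $\theta'$ is equivalent to $2(1-\cos u)^2>(u-\sin u)\sin u$, i.e. to the inequality $g(u)\coloneqq 2(1-\cos u)^2-(u-\sin u)\sin u>0$ on $(0,2\pi)$. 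I would prove this by checking $g(0)=0$ together with a sign analysis of $g'$ (and if necessary $g''$), using the standard estimates $u-\sin u>0$ for $u>0$ and handling the sub-interval $u\in(\pi,2\pi)$, where $\sin u<0$, separately since there the term $-(u-\sin u)\sin u$ is already positive and only the behaviour near $u=2\pi$ needs care. By oddness, $\theta'>0$ then holds on $(-\pi,\pi)\setminus\{0\}$ as well, and $\theta'(0)=\tfrac23>0$, so $\theta$ is strictly increasing on $(-\pi,\pi)$.

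Finally, since $\theta$ is continuous and strictly increasing, it is a homeomorphism onto its image, which is the open interval $(\lim_{\lambda\to-\pi^+}\theta(\lambda),\lim_{\lambda\to\pi^-}\theta(\lambda))$; I would compute that as $\lambda\to\pi^-$ the numerator $2\lambda-\sin(2\lambda)\to 2\pi>0$ while $2\sin^2\lambda\to 0^+$, so $\theta(\lambda)\to+\infty$, and by oddness $\theta(\lambda)\to-\infty$ as $\lambda\to-\pi^+$; hence the image is all of $\R$. Because $\theta$ is analytic with nowhere-vanishing derivative, its inverse is analytic as well, so $\theta$ is an analytic diffeomorphism of $(-\pi,\pi)$ onto $\R$. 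The only genuinely non-routine point is verifying the elementary trigonometric inequality $g(u)>0$ on $(0,2\pi)$; everything else is bookkeeping with power series and monotonicity. (Alternatively, since this is quoted as Lemma~3 of~\cite{Gaveau}, one may simply cite that reference; I would nonetheless include the short self-contained argument above.)
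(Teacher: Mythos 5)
The paper does not prove this lemma at all; it simply cites \cite[\S 3, Lemma 3]{Gaveau}. Your self-contained sketch is therefore a genuinely different route, and its overall structure (analyticity and oddness by power series, monotonicity via $\theta'>0$, surjectivity via the boundary limits, inverse analyticity from $\theta'\neq 0$) is the right one.

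However, there is a factor-of-two slip in the monotonicity step. From
\[
\theta'(\lambda)=2-\frac{(2\lambda-\sin 2\lambda)\,2\sin 2\lambda}{(1-\cos 2\lambda)^2},
\]
positivity of $\theta'$ for $\lambda\neq 0$ is equivalent, after cancelling the $2$'s, to
\[
(1-\cos u)^2>(u-\sin u)\sin u,\qquad u=2\lambda\in(0,2\pi),
\]
and \emph{not} to the weaker inequality $2(1-\cos u)^2>(u-\sin u)\sin u$ that you state. Proving your stated inequality would not establish $\theta'>0$. Fortunately the correct inequality is both true and cleaner than a direct derivative bash on $g$: using $1-\cos(2\lambda)=2\sin^2\lambda$ and $\sin(2\lambda)=2\sin\lambda\cos\lambda$ it reduces to $\sin^2\lambda>\lambda\sin\lambda\cos\lambda$, i.e.\ $\sin\lambda>\lambda\cos\lambda$ for $\lambda\in(0,\pi)$; this is $\tan\lambda>\lambda$ on $(0,\pi/2)$ (classical) and is trivial on $[\pi/2,\pi)$ where $\cos\lambda\le 0<\sin\lambda$. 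With that correction your argument is complete, and I would recommend replacing the ``sign analysis of $g'$, $g''$'' plan with this one-line reduction.
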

	
	\begin{definition}	
		For every $\omega \in \R$, set $y_\omega\coloneqq \theta^{-1}(\omega)$. For every $(x,t)\in G$ define
		\[
		d(x,t)\coloneqq \begin{cases}
		\abs{x}\frac{y_\omega}{ \sin(y_\omega)}  &\text{if $x\neq 0$ and $t\neq 0$,}\\
		\abs{x} &\text{if $t=0$,}\\
		\sqrt{4\pi \abs{t}} &\text{if $x=0$.}		
		\end{cases}
		\]
	\end{definition}
	
	It is worth observing that $d(x,t)$ is the Carnot-Carathéodory distance between $(x,t)$ and the origin with respect to the horizontal distribution generated by the vector fields $X_1,\dots, X_{2 n}$. See~\cite[\S 4]{Beals} or~\cite[Theorem 3.5]{Eldredge} for a proof and further details.
	
	\subsection{The Method of Stationary Phase} 
	The main tool that we shall use is an easy corollary of H\"ormander's theorem of stationary phase~\cite[Theorem 7.7.5]{Hormander2}, stated in a form convenient for our needs. We include a proof for the sake of clarity. Given an open set $V\subseteq \R^m$, we write $\mathcal{E}(V)$ for the space of $C^\infty$ complex-valued functions on $V$, endowed with the topology of locally uniform convergence of all the derivatives. If $f$ is a twice differentiable function on an open neighbourhood of $0$, we write $P_{2,0}f$ for the Taylor polynomial of order $2$ about $0$ of $f$.
	
	\begin{theorem}\label{stationaryphase}
		Let $V$ be an open neighbourhood of $0$ in $\R^m$, and let $\mathscr{F},\,\mathscr{G}$ be bounded subsets of $\mathcal{E}(V)$ such that
		\begin{itemize}
			\item[1.] $\Im f(\lambda)\geq 0$ for every $\lambda\in V$ and every $f\in \mathscr{F}$. Moreover, there exist $\eta>0$ and $c_1>0$ such that $B(0,2\eta)\subseteq V$ and $\Im f(\lambda)\geq c_1|\lambda|$ whenever $\abs{\lambda}\geq \eta$ and $f\in \mathscr{F}$;
			
			\item[2.] $\Im f(0)=f'(0)=0$ and $ \det f''(0)\neq 0$ for all $f\in \mathscr{F}$;
			
			\item[3.] there exists $c_2>0$  such that $\abs{f'(\lambda)} \geq c_2\abs{\lambda}$ for all $\abs{\lambda}\leq 2\eta$ and for all $f\in \mathscr{F}$;
			
			\item[4.] there exists $c_3>0$ such that $\abs{g(\lambda)}\leq c_3 e^{c_3 \abs{\lambda}}$ whenever $\lambda\in V$, for every $g\in \mathscr{G}$.
		\end{itemize}
		Then, for every $k\in \N$,
		\begin{equation}\label{eqfasestaz}
			\int_{V} e^{iR f(\lambda)} g(\lambda)\,\dd \lambda= e^{i R f (0)} \sqrt{\frac{(2\pi i)^m}{R^m \det f''(0)}}\sum_{j=0}^k \frac{L_{j,f} g}{R^j}+ O\left( \frac{1}{R^{\frac{m}{2}+k+1}}\right) 
		\end{equation}
		as $R\to +\infty$, uniformly as $f\in \mathscr{F}$ and $g\in \mathscr{G}$, where
		\[
		\begin{split}
		L_{j,f}g= i^{-j}\sum_{\mu=0}^{2j}\frac{ (f''(0)^{-1} \partial, \partial )^{\mu+j} [ (f-P_{2,0}f )^\mu g ](0)}{ 2^{\mu+j} \mu !(\mu+j) !}.
		\end{split}
		\]
		In particular, $L_{0,f}g=g(0)$.
	\end{theorem}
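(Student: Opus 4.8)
\emph{Strategy.} I would deduce the statement from H\"ormander's stationary phase theorem~\cite[Theorem 7.7.5]{Hormander2}, which gives the asymptotic expansion of an oscillatory integral near an isolated non-degenerate critical point of the phase. The point is to localise~\eqref{eqfasestaz} near $0$, where that theorem applies with uniform constants, and to dispose of the complementary region using the positivity of $\Im f$. Fix $k\in\N$ and a cut-off $\chi\in C^\infty_c(\R^m)$ with $0\leq\chi\leq1$, $\chi\equiv1$ on $B(0,\eta)$ and $\supp\chi\subseteq B(0,2\eta)\subseteq V$ (legitimate by hypothesis~1), and split
\[
\int_V e^{iRf(\lambda)}g(\lambda)\,\dd\lambda=\int_V e^{iRf(\lambda)}\chi(\lambda)g(\lambda)\,\dd\lambda+\int_V e^{iRf(\lambda)}\bigl(1-\chi(\lambda)\bigr)g(\lambda)\,\dd\lambda.
\]

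\emph{Far region.} On $\supp(1-\chi)$ one has $\abs{\lambda}\geq\eta$, hence $\Im f(\lambda)\geq c_1\abs{\lambda}$ by hypothesis~1, so by hypothesis~4 the integrand is bounded in modulus by $c_3e^{-(Rc_1-c_3)\abs{\lambda}}$, which for $R\geq 2(c_3+1)/c_1$ is at most $c_3e^{-Rc_1\eta/2}e^{-\abs{\lambda}}$ on $\{\abs{\lambda}\geq\eta\}$. Thus the second integral is $O\bigl(e^{-Rc_1\eta/2}\bigr)$, uniformly in $f\in\mathscr F$, $g\in\mathscr G$ since $c_1,c_3,\eta$ are absolute; in particular it is absorbed into the error term of~\eqref{eqfasestaz}.

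\emph{Local region.} The function $\chi g$ is supported in the fixed compact $K\coloneqq\supp\chi\subseteq B(0,2\eta)\subseteq V$, and on $K$ the phase $f$ meets all the hypotheses of~\cite[Theorem 7.7.5]{Hormander2} at the critical point $0$: $\Im f\geq0$, $\Im f(0)=f'(0)=0$, $\det f''(0)\neq0$ (hypotheses~1--2), and $f'(\lambda)\neq0$ on $K\setminus\{0\}$ with $\abs{\lambda}/\abs{f'(\lambda)}\leq1/c_2$ (hypothesis~3; combined with a Taylor estimate and the uniform bound on $f'''$, this also keeps $\abs{\det f''(0)}$ away from $0$). Applying that theorem to each pair $(f,\chi g)$ with its integer parameter equal to $k+1$ gives
\[
\int_V e^{iRf(\lambda)}\chi(\lambda)g(\lambda)\,\dd\lambda=e^{iRf(0)}\Bigl(\det\tfrac{Rf''(0)}{2\pi i}\Bigr)^{-1/2}\sum_{j=0}^{k}\frac{L_j(\chi g)}{R^{j}}+O\!\Bigl(R^{-\frac m2-k-1}\Bigr),
\]
with $\bigl(\det(Rf''(0)/2\pi i)\bigr)^{-1/2}=\sqrt{(2\pi i)^m/(R^m\det f''(0))}$ in the branch fixed there, and with $L_j$ the operator of loc.\ cit. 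The implied constant is uniform over $\mathscr F,\mathscr G$ because H\"ormander's remainder is controlled by finitely many derivatives of $f$ on $K$, by $\sup_K\abs{\lambda}/\abs{f'(\lambda)}$, and by $\sum_{\abs{\alpha}\leq2(k+1)}\sup_K\abs{D^\alpha(\chi g)}$, all of which are bounded since $\mathscr F,\mathscr G$ are bounded in $\mathcal E(V)$ and $\chi$ is fixed. Since $\chi\equiv1$ near $0$ and $L_j$ is a differential operator evaluated at $0$, we have $L_j(\chi g)=L_jg$; and a direct comparison of coefficients — putting $\nu=\mu+j$ in H\"ormander's double sum, using that $f-P_{2,0}f$ is exactly his third-order remainder at $0$ (here $f'(0)=0$), and keeping track of the convention $D=-i\partial$ — identifies $L_jg$ with $L_{j,f}g$ as in the statement, the constraint $2\nu\geq3\mu$ becoming $0\leq\mu\leq2j$ and $L_{0,f}g=g(0)$. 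Adding the two contributions yields~\eqref{eqfasestaz}.

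\emph{Main obstacle.} The delicate part is not the analysis but the bookkeeping: rewriting H\"ormander's $L_j$ in the closed form $L_{j,f}$ of the statement (index shuffling, the sign from $D=-i\partial$, and the exact branch of $\sqrt{\det f''(0)}$), and verifying that \emph{each} constant occurring in H\"ormander's remainder estimate is genuinely uniform over the bounded families $\mathscr F$ and $\mathscr G$ — which is precisely what hypotheses~1--4 are tailored to guarantee.
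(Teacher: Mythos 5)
Your proposal is correct and follows essentially the same route as the paper: split with a fixed cut-off $\chi$ supported in $B(0,2\eta)$, apply H\"ormander's Theorem~7.7.5 to the local piece, and kill the far piece by the exponential bound $|e^{iRf}g|\lesssim e^{-(Rc_1-c_3)|\lambda|}$ from hypotheses~1 and~4. The only difference is that you spell out the bookkeeping (identification of $L_j(\chi g)$ with $L_{j,f}g$, uniformity of the remainder, and the lower bound on $|\det f''(0)|$ via hypothesis~3), which the paper leaves implicit in the appeal to H\"ormander.
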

	
	\begin{proof}
		Take some $\tau\in C_c^\infty(\R^m)$ such that $\chi_{B(0,\eta)}\leq \tau \leq \chi_{B(0,2\eta)}$. Then split the integral as 
		\[\int_{V} e^{i R f(\lambda)} g(\lambda)\,\dd \lambda = \int_{V} e^{i R f(\lambda)} g(\lambda)\tau(\lambda)\,\dd \lambda + \int_{V} e^{i R f(\lambda)} g(\lambda)(1-\tau(\lambda))\,\dd \lambda\]
		and apply~\cite[Theorem 7.7.5]{Hormander2} to the first term, thanks to the first assumption in 1 and the assumptions 2 and 3: this represents the main contribution to the integral, and gives the right hand side of~\eqref{eqfasestaz}. The second term is instead negligible, since by the second assumption in 1 and by 4 we get, if $R$ is large enough,
		\[\begin{split}
		\abs*{\int_{V} e^{iR f(\lambda)} g(\lambda)(1-\tau(\lambda))\,\dd \lambda} &\lesssim  \int_{\abs{\lambda}\geq \eta} e^{-R\,\Im f(\lambda)+c_3 \abs{\lambda}}\,\dd \lambda \lesssim \int_\eta^\infty e^{-R c_1\rho+c_3\rho} \rho^{m-1}\,\dd \rho \\
		&=\int_\eta^\infty e^{-(c_1 R \rho- (1+c_3)\rho)-\rho} \rho^{m-1}\,\dd \rho \\
		&\lesssim e^{-(c_1 R-(1+c_3)) \eta}\int_0^\infty e^{-\rho   } \rho^{m-1}\,\dd\rho,\end{split}\]
		which is $O\left( e^{-Rc_1\eta}\right)$. The proof is complete.
	 \end{proof}
	\begin{remark}\label{Laplace}
		Theorem~\ref{stationaryphase} covers more cases than only oscillatory integrals. Indeed, assume we have an integral of the form
		\[\int_{V} e^{-R f(\lambda)} g(\lambda)\,\dd \lambda\]
		where $f$ is real. Under suitable assumptions, such integrals are usually treated via Laplace's method (see, e.g.,~\cite{Erdelyi} and~\cite{Wong}). In this case, one can use directly Theorem~\ref{stationaryphase},
		by substituting $\Im f$ by $f$ in the assumptions 1-4, thus getting
		\begin{equation}\label{eqfasestazmodificata}
			\int_{V} e^{-R f(\lambda)} g(\lambda)\,\dd\lambda= \sqrt{\frac{(2\pi)^m}{R^m \det f''(0)}}\sum_{j=0}^k \frac{L_{j,f} g}{R^j}+ O\left( \frac{1}{R^{\frac{m}{2}+k+1}}\right) ,
		\end{equation}
		with the obvious modifications on $L_{j,f}g$. Coherently, in such cases Theorem~\ref{stationaryphase} will be referred to as \emph{Laplace's method}.
	\end{remark}
	
	\section{The Heisenberg Group}\label{sec:m1}
	In this section we deal with the case $m=1$, namely when $G= \Hei^n$ is the Heisenberg group. The function $p_{1,k_1,k_2}$ of Definition~\ref{defp1k1k2} here reads 
	\[ \begin{split}
	p_{1,k_1,k_2}(x,t)&=\frac{2 (-1)^{k_1}i^{k_2}}{(4\pi)^{n+1} }\int_\mathbb{R} e^{i\lambda \abs{t}-\frac{\abs{x}^2}{4 }\lambda \coth(\lambda)} \frac{\lambda^{n+k_1+k_2}\cosh(\lambda)^{k_1}}{\sinh(\lambda)^{n+k_1}}\,\dd \lambda.
	\end{split}
	\]
	Indeed, the absolute values of $\lambda$ in the integral~\eqref{pk1k2} can be removed by parity reasons. We begin by introducing some functions which greatly simplify the notation.
	\begin{definition} 
		Define
		\begin{equation*}\label{hk1k2}
			h_{k_1,k_2}(R,t)\coloneqq (-1)^{k_1}i^{k_2}\int_\mathbb{R} e^{i\lambda \abs{t}- R\lambda \coth(\lambda)} \frac{\lambda  ^{n+k_1+k_2 }\cosh(\lambda  )^{k_1}}{\sinh(\lambda  )^{n+k_1}}\,\dd  \lambda = \int_\mathbb{R} e^{iR\phi_\omega(\lambda)}a_{k_1,k_2}(\lambda)\,\dd \lambda,
		\end{equation*}
		where
		\begin{equation}\label{ak1k2}
			\begin{split}
				a_{k_1,k_2}(\lambda)&=\begin{cases}
				(-1)^{k_1}i^{k_2}\frac{\lambda  ^{n+k_1 +k_2}\cosh(\lambda  )^{k_1}}{\sinh(\lambda  )^{n+k_1}} & \text{if $\lambda  \not\in \pi i\mathbb{Z}$,}\\
				(-1)^{k_1}i^{k_2}\delta_{k_2,0}	&\text{if $\lambda  =0$,}
				\end{cases}  	\\
				\phi_\omega  (\lambda)&= \begin{cases}
					\omega \lambda   +i\lambda  \coth(\lambda  ) & \text{if $\lambda  \not\in \pi i\mathbb{Z}$,}\\
					i	&\text{if $\lambda  =0$}.
				\end{cases}
			\end{split} 
		\end{equation}
	\end{definition}
	Notice that
	\[
	p_{1,k_1,k_2}(x,t)= \frac{2}{(4\pi)^{n+1}} h_{k_1,k_2}\left(R,t\right)
	\]
	for all $(x,t)\in \mathbb{H}^n$; hence we can reduce matters to studying $h_{k_1,k_2}(R,t)$. Observe moreover that $y_\omega= \theta^{-1}(\omega)  \in [0,\pi)$, since $\omega\geq 0$.
	
	It will be convenient to reverse the dependence relation between $(R,\omega)$ and $(x,t)$: hence, we shall no longer consider $R$ and $\omega$ as functions of $(x,t)$, but rather as ``independent variables''. In this order of ideas, the formula $\abs{t}=R\,\omega$ should sound as a definition.
	
Our intent will be to apply Theorem~\ref{stationaryphase} to a function closely related to $h_{k_1,k_2}$; hence we shall find some stationary points of the phase of $h_{k_1,k_2}$, namely $\phi_\omega$. The lemma below is of fundamental importance.

	\begin{lemma}\emph{\cite[§ 3, Lemma 6]{Gaveau}}\label{stime:lem:7}
		$\phi_\omega'(\lambda )= \omega + \tilde\theta( i\lambda  )$
		for all $\lambda\not\in \pi i\Z^*$, where $\tilde\theta$ is the analytic continuation of $\theta$ to $\Dom(\phi_{\omega})$. In particular, $i y_\omega$ is a stationary point of $\phi_\omega$.
	\end{lemma}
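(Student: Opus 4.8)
The plan is to start from the explicit formula for the phase,
\[
\phi_\omega(\lambda) = \omega\lambda + i\lambda\coth(\lambda)\qquad(\lambda\notin\pi i\Z),
\]
and simply differentiate. First I would compute $\frac{d}{d\lambda}\bigl(\lambda\coth\lambda\bigr) = \coth\lambda - \lambda/\sinh^2\lambda = \frac{\sinh\lambda\cosh\lambda-\lambda}{\sinh^2\lambda}$, so that
\[
\phi_\omega'(\lambda) = \omega + i\,\frac{\sinh\lambda\cosh\lambda-\lambda}{\sinh^2\lambda}.
\]
Next I would relate the second summand to $\theta$. Recall $\theta(\mu) = \frac{2\mu-\sin(2\mu)}{2\sin^2\mu}$ for $\mu\in(-\pi,\pi)$; writing $\sin(2\mu)=2\sin\mu\cos\mu$, this is $\theta(\mu) = \frac{\mu - \sin\mu\cos\mu}{\sin^2\mu}$. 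Now substitute $\mu = i\lambda$ and use $\sin(i\lambda) = i\sinh\lambda$, $\cos(i\lambda) = \cosh\lambda$, so $\sin^2(i\lambda) = -\sinh^2\lambda$ and $\sin(i\lambda)\cos(i\lambda) = i\sinh\lambda\cosh\lambda$. Hence
\[
\theta(i\lambda) = \frac{i\lambda - i\sinh\lambda\cosh\lambda}{-\sinh^2\lambda} = i\,\frac{\sinh\lambda\cosh\lambda-\lambda}{\sinh^2\lambda},
\]
which is exactly the second summand of $\phi_\omega'(\lambda)$. Of course, on the left side this must be read as $\tilde\theta(i\lambda)$, the analytic continuation of $\theta$ off the real interval $(-\pi,\pi)$; the identity of meromorphic functions just proved on a neighbourhood of $\R\setminus\pi i\Z$ extends to all of $\Dom(\phi_\omega)$ by the identity principle, and the excluded set $\pi i\Z^*$ is precisely where $\sinh\lambda$ vanishes (the point $\lambda=0$ being a removable singularity, which is why $\lambda\notin\pi i\Z^*$ rather than $\pi i\Z$ is the right hypothesis). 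This gives the asserted formula $\phi_\omega'(\lambda) = \omega + \tilde\theta(i\lambda)$.

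For the final assertion, I would note that by Lemma~\ref{stime:lem:10} the real equation $\theta(y)=\omega$ has the unique solution $y = y_\omega = \theta^{-1}(\omega)\in(-\pi,\pi)$. Setting $\lambda = iy_\omega$ and using the formula just derived,
\[
\phi_\omega'(iy_\omega) = \omega + \tilde\theta\bigl(i\cdot iy_\omega\bigr) = \omega + \tilde\theta(-y_\omega).
\]
Here I need that $\tilde\theta(-y_\omega) = \theta(-y_\omega)$, which holds since $-y_\omega\in(-\pi,\pi)$ and $\tilde\theta$ agrees with $\theta$ there; and since $\theta$ is odd, $\theta(-y_\omega) = -\theta(y_\omega) = -\omega$. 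Therefore $\phi_\omega'(iy_\omega) = \omega - \omega = 0$, so $iy_\omega$ is a stationary point of $\phi_\omega$, as claimed. (One should also check $iy_\omega\in\Dom(\phi_\omega)$, i.e. $iy_\omega\notin\pi i\Z$: indeed $y_\omega\in(-\pi,\pi)$, so this holds, and in fact $y_\omega\in[0,\pi)$ in our situation since $\omega\geq 0$.)

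The only genuine subtlety — and the step I would be most careful about — is the passage from the real-variable identity to its analytic continuation: one must verify that the right-hand side, a priori built from $\theta$ which is only defined on $(-\pi,\pi)$, makes sense as a holomorphic function on $\Dom(\phi_\omega)$ and coincides with the explicit expression $i(\sinh\lambda\cosh\lambda-\lambda)/\sinh^2\lambda$ there. This is immediate once one observes that this explicit expression \emph{is} meromorphic on $\C$ with poles only on $\pi i\Z^*$, agrees with $\theta(i\,\cdot\,)$ on a real neighbourhood of $0$, and hence provides the analytic continuation $\tilde\theta$; everything else is a routine substitution using $\sin(i\lambda)=i\sinh\lambda$ and $\cos(i\lambda)=\cosh\lambda$.
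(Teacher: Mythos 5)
Your proof is correct, and it is the direct computation one expects for this lemma. The paper does not reprove it but cites Gaveau; your verification — differentiate $\lambda\coth\lambda$, rewrite $\theta$ via $\sin(2\mu)=2\sin\mu\cos\mu$, substitute $\mu=i\lambda$ using $\sin(i\lambda)=i\sinh\lambda$, $\cos(i\lambda)=\cosh\lambda$, and invoke the identity principle to see that the explicit meromorphic expression is the continuation $\tilde\theta$ — is exactly the computation behind the cited result, and the oddness argument correctly yields $\phi_\omega'(iy_\omega)=\omega-\omega=0$.

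One small wording slip worth fixing: you speak of the identity "proved on a neighbourhood of $\R\setminus\pi i\Z$," but $\R\cap\pi i\Z=\{0\}$, so you mean a (real, then complex) neighbourhood of $\R\setminus\{0\}$, or simply that both sides are globally meromorphic on $\C\setminus\pi i\Z^*$ and agree on $(-\pi,\pi)/i$; either phrasing carries the argument. Also note that the statement's phrase "analytic continuation of $\theta$ to $\Dom(\phi_\omega)$" is itself a mild abuse — since $\tilde\theta$ is evaluated at $i\lambda$, its natural domain is $i\cdot\Dom(\phi_\omega)=\C\setminus\pi\Z^*$, and your identification of $\tilde\theta$ with the explicit formula $\mu\mapsto(\mu-\sin\mu\cos\mu)/\sin^2\mu$ (with the removable singularity at $0$ filled) is the right way to make this precise.
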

	
	\subsection{\textbf{I.} Estimates for $(x,t)\to \infty$ while $ 4\abs{t}/\abs{x}^2 \leq C$.}\label{sec_HeisI}
		\begin{theorem}\label{HeisHstimeI}
Fix $C>0$. If $(x,t)\to \infty$ while $0\leq \omega \leq C$, then
\begin{equation}\label{eqHeisHstime}
p_{1,k_1,k_2}(x,t)= \frac{1}{|x|}e^{-\frac{1}{4}d(x,t)^2}\Psi(\omega) \left[(-1)^{k_1+k_2}  \frac{ y_\omega^{n+k_1+k_2}\cos(y_\omega)^{k_1}}{\sin (y_\omega)^{n+k_1}} +O\left(\frac{1}{|x|^2}\right)\right]
\end{equation}
where
\[ \Psi(\omega)=\begin{cases}
	\frac{1}{4^n \pi^{n+1}}\sqrt{\frac{\pi \sin(y_\omega)^3}{\sin(y_\omega)-y_\omega\cos(y_\omega)}}, 	& \text{if $\omega\neq 0$,}\\
		\frac{(3\pi)^{1/2}}{4^n \pi^{n+1}}, & \text{if $\omega= 0$.}\end{cases}\]
		\end{theorem}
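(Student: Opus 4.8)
The plan is to reduce to the integral $h_{k_1,k_2}(R,t)=\int_\R e^{iR\phi_\omega(\lambda)}a_{k_1,k_2}(\lambda)\,\dd\lambda$ (recall $p_{1,k_1,k_2}=\tfrac{2}{(4\pi)^{n+1}}h_{k_1,k_2}$), to move the contour of integration through the stationary point $iy_\omega$ of $\phi_\omega$ given by Lemma~\ref{stime:lem:7}, and then to apply the stationary phase method, Theorem~\ref{stationaryphase}, in the variable $R$. Note first that $(x,t)\to\infty$ with $\omega\le C$ forces $|x|\to\infty$, hence $R=|x|^2/4\to+\infty$, while $y_\omega=\theta^{-1}(\omega)$ stays in the compact set $[0,y_C]\subset(-\pi,\pi)$, $y_C\coloneqq\theta^{-1}(C)$. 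The integrand $\lambda\mapsto e^{iR\phi_\omega(\lambda)}a_{k_1,k_2}(\lambda)$ extends holomorphically to the strip $\{0\le\Im\lambda\le y_C\}$ (the apparent singularity at $\lambda=0$ is removable), and on the vertical segments $\{\pm M+i\sigma\colon 0\le\sigma\le y_\omega\}$ the amplitude decays exponentially in $M$ (the factor $\sinh^{-n}$ dominates) while $|e^{iR\phi_\omega}|=e^{-R\Im\phi_\omega}$ stays bounded; Cauchy's theorem then lets me rewrite the integral over $\R+iy_\omega$. Substituting $\lambda=\mu+iy_\omega$ with $\mu\in\R$ and putting $f_\omega(\mu)\coloneqq\phi_\omega(\mu+iy_\omega)-\phi_\omega(iy_\omega)$, $g_\omega(\mu)\coloneqq a_{k_1,k_2}(\mu+iy_\omega)$, I obtain
\[
h_{k_1,k_2}(R,t)=e^{iR\phi_\omega(iy_\omega)}\int_\R e^{iRf_\omega(\mu)}g_\omega(\mu)\,\dd\mu .
\]

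Next I would verify the hypotheses of Theorem~\ref{stationaryphase} for the families $\mathscr F=\{f_\omega\colon 0\le\omega\le C\}$ and $\mathscr G=\{g_\omega\colon 0\le\omega\le C\}$ on $V=\R$, so that the conclusion holds uniformly in $\omega$. Since $f_\omega,g_\omega$ are restrictions of functions that are holomorphic and locally uniformly bounded (in $\omega$) on the strip, $\mathscr F,\mathscr G$ are bounded in $\mathcal E(\R)$ and hypothesis~4 holds. By Lemma~\ref{stime:lem:7}, $f_\omega'(0)=\phi_\omega'(iy_\omega)=0$, and differentiating $\phi_\omega'(\lambda)=\omega+\tilde\theta(i\lambda)$ gives $f_\omega''(0)=\phi_\omega''(iy_\omega)=i\theta'(y_\omega)=\tfrac{2i(\sin y_\omega-y_\omega\cos y_\omega)}{\sin^3 y_\omega}\ne 0$ by Lemma~\ref{stime:lem:10} (with limiting value $2i/3$ at $y_\omega=0$); this is hypothesis~2, and, combined with the uniform non-degeneracy of $f_\omega''(0)$ and a Taylor estimate, it also yields hypothesis~3 for $\eta$ small enough. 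The crux is hypothesis~1: $\Im f_\omega\ge 0$ on $\R$, with $\Im f_\omega(\mu)\gtrsim|\mu|$ for $|\mu|$ large. Using $\coth(\mu+iy)=\tfrac{\sinh 2\mu-i\sin 2y}{\cosh 2\mu-\cos 2y}$ and $\omega+\cot y_\omega=y_\omega/\sin^2 y_\omega$ one computes
\[
\Im f_\omega(\mu)=\frac{\mu\sinh 2\mu+y_\omega\sin 2y_\omega}{\cosh 2\mu-\cos 2y_\omega}-y_\omega\cot y_\omega ,
\]
and, after clearing the (positive) denominator and using $\cosh 2\mu-1=2\sinh^2\mu$, the inequality $\Im f_\omega(\mu)\ge 0$ reduces to $\mu\coth\mu\ge y_\omega\cot y_\omega$, which holds for every $\mu\in\R$ and every $y_\omega\in[0,\pi)$ because $\mu\coth\mu\ge 1\ge y\cot y$ there; the same formula shows $\Im f_\omega(\mu)\sim|\mu|$ as $|\mu|\to\infty$, uniformly in $\omega$, giving the quantitative part of hypothesis~1.

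Applying Theorem~\ref{stationaryphase} with $m=1$ and $k=0$ (so the error $O(R^{-3/2})$ becomes an $O(R^{-1})$ correction of the leading coefficient), uniformly in $\omega\in[0,C]$, then yields
\[
h_{k_1,k_2}(R,t)=e^{iR\phi_\omega(iy_\omega)}\sqrt{\frac{2\pi i}{R\,f_\omega''(0)}}\,\bigl[g_\omega(0)+O(R^{-1})\bigr].
\]
To finish, I would evaluate the three factors. First, $\phi_\omega(iy_\omega)=i\,y_\omega^2/\sin^2 y_\omega$ (from $\coth(iy_\omega)=-i\cot y_\omega$ and the rearrangement $\omega+\cot y_\omega=y_\omega/\sin^2 y_\omega$ of $\omega=\theta(y_\omega)$), so that, since $d(x,t)=|x|\,y_\omega/\sin y_\omega$ and $R=|x|^2/4$, one has $e^{iR\phi_\omega(iy_\omega)}=e^{-Ry_\omega^2/\sin^2 y_\omega}=e^{-\frac14 d(x,t)^2}$. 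Second, $g_\omega(0)=a_{k_1,k_2}(iy_\omega)=(-1)^{k_1}i^{k_2}\tfrac{(iy_\omega)^{n+k_1+k_2}\cos^{k_1}y_\omega}{(i\sin y_\omega)^{n+k_1}}=(-1)^{k_1+k_2}\tfrac{y_\omega^{n+k_1+k_2}\cos^{k_1}y_\omega}{\sin^{n+k_1}y_\omega}$. Third, $\sqrt{\tfrac{2\pi i}{R f_\omega''(0)}}=\sqrt{\tfrac{\pi\sin^3 y_\omega}{R(\sin y_\omega-y_\omega\cos y_\omega)}}=\tfrac{2}{|x|}\sqrt{\tfrac{\pi\sin^3 y_\omega}{\sin y_\omega-y_\omega\cos y_\omega}}$. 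Multiplying these and then by $\tfrac{2}{(4\pi)^{n+1}}$, and using $\tfrac{4}{(4\pi)^{n+1}}=\tfrac{1}{4^n\pi^{n+1}}$ together with $O(1/R)=O(1/|x|^2)$, gives exactly~\eqref{eqHeisHstime} with $\Psi(\omega)$ as stated; the case $\omega=0$ (so $y_\omega=0$, $d(x,0)=|x|$) is subsumed in the same uniform statement, via the limits $\tfrac{\sin^3 y}{\sin y-y\cos y}\to 3$ and $\tfrac{y^{n+k_1+k_2}\cos^{k_1}y}{\sin^{n+k_1}y}\to\delta_{k_2,0}$ as $y\to0^+$. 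The main obstacle is the verification of hypothesis~1 of Theorem~\ref{stationaryphase}, above all the global positivity of $\Im f_\omega$ uniformly over $\omega\in[0,C]$; the remaining points are routine bookkeeping.
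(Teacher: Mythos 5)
Your proposal is correct and follows essentially the same route as the paper: shift the contour to the line $\R+iy_\omega$ through the stationary point of $\phi_\omega$, apply Theorem~\ref{stationaryphase} uniformly in $\omega\in[0,C]$, and evaluate $e^{iR\phi_\omega(iy_\omega)}$, $f_\omega''(0)=i\theta'(y_\omega)$, and $a_{k_1,k_2}(iy_\omega)$ to assemble the constants. The only difference is cosmetic: the paper packages the contour shift and the positivity of $\Im\psi_\omega$ in Lemmata~\ref{stime:lem:8} and~\ref{stime:lem:5} respectively, whereas you re-derive both inline (Cauchy's theorem plus the explicit formula for $\Re[(\mu+iy)\coth(\mu+iy)]$), arriving at the same inequality $\mu\coth\mu\ge y_\omega\cot y_\omega$.
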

It is worthwhile to stress that the above estimates may \emph{not} be sharp when $\omega\to 0$ and $k_2>0$, as well as when $\omega\to \frac{\pi}{2}$ and $k_1>0$. In these cases indeed $y_\omega \to 0$ and $y_\omega \to \frac{\pi}{2}$, respectively, and the first term of the asymptotic expansion~\eqref{eqHeisHstime} {may be}  smaller than the remainder. 
{However}, the \emph{sharp} asymptotics of $p_{1,k_1,k_2}$ when $\omega$ remains bounded turn out to be {more} involved, and for the moment we avoid to treat the complete picture of its asymptotic behaviour. The statement above is just a simplified version of Theorem~\ref{HstimeI} of Section~\ref{SecStimeIm>1}, where the general case of H-type groups is completely described.

 In this section we {then} limit ourselves to consider Theorem~\ref{HeisHstimeI} in the stated form. Its proof mostly consists in a straightforward generalization of~\cite[Theorem 2 of § 3]{Gaveau}, but it can also be seen as Proposition~\ref{propHorm} of Section~\ref{SecStimeIm>1} in the {current setting} of Heisenberg groups. Nevertheless, for the sake of completeness we give a brief sketch of the proof.

	{The main idea is to change the contour of integration in the integral defining $h_{k_1,k_2}$ in order to meet a stationary point of $\phi_\omega$. Since $\Im \,\phi_\omega (\lambda)= \omega\, \Im\, \lambda+\Re\left[ \lambda\coth(\lambda)\right]$ for every $\lambda \not\in{\pi i\Z}$, to make this change we need to deepen our knowledge of $\Re\left[ \lambda\coth(\lambda)\right]$ and $\abs{a_{k_1,k_2}}$; this is done in the following lemma, which we state without proof. }
	
	\begin{lemma}\label{stime:lem:5}
		For all $\lambda,y\in \R$ such that $\abs*{\lambda}>\abs*{y}$,
		\[
		\Re[ (\lambda+i y)\coth(\lambda+i y)]=\frac{\lambda\sinh(2\lambda)+y\sin(2y)}{2(\sinh(\lambda)^2+\sin(y)^2)}>0.
		\]
		Moreover, for all $\lambda,y\in \R$ such that either $y\not\in \pi\Z$ or $\lambda\neq 0$,
		\[
		\abs*{a_{k_1,k_2}(\lambda+i y)}= \frac{\abs*{\lambda+i y}^{n+k_1+k_2}(\sinh(\lambda)^2+\cos(y)^2)^{\frac{k_1}{2}}}{(\sinh(\lambda)^2+\sin(y)^2)^{\frac{n+k_1}{2}}}.
		\]
	\end{lemma}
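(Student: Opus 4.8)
The proof is a direct computation resting on the addition formulas for the hyperbolic functions together with the identity $\overline{f(\lambda+iy)}=f(\lambda-iy)$, valid whenever $f$ has real Taylor coefficients.

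\textbf{Step 1 (two auxiliary identities).} First I would record that, for all $\lambda,y\in\R$,
\[
\abs{\sinh(\lambda+iy)}^2=\sinh(\lambda)^2+\sin(y)^2,\qquad \abs{\cosh(\lambda+iy)}^2=\sinh(\lambda)^2+\cos(y)^2 .
\]
Indeed $\abs{\sinh(\lambda+iy)}^2=\sinh(\lambda+iy)\sinh(\lambda-iy)$ and likewise for $\cosh$; applying $\sinh a\sinh b=\tfrac12[\cosh(a+b)-\cosh(a-b)]$ and $\cosh a\cosh b=\tfrac12[\cosh(a+b)+\cosh(a-b)]$ with $a=\lambda+iy$ and $b=\lambda-iy$ gives $\tfrac12[\cosh(2\lambda)\mp\cos(2y)]$, and the duplication formulas $\cosh(2\lambda)=1+2\sinh(\lambda)^2$ and $\cos(2y)=1-2\sin(y)^2=2\cos(y)^2-1$ finish the computation.

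\textbf{Step 2 (the real part).} Writing $(\lambda+iy)\coth(\lambda+iy)=\dfrac{(\lambda+iy)\,\cosh(\lambda+iy)\sinh(\lambda-iy)}{\abs{\sinh(\lambda+iy)}^2}$ and using $\cosh a\sinh b=\tfrac12[\sinh(a+b)-\sinh(a-b)]$ with $a=\lambda+iy$, $b=\lambda-iy$, one gets $\cosh(\lambda+iy)\sinh(\lambda-iy)=\tfrac12[\sinh(2\lambda)-i\sin(2y)]$. Multiplying by $\lambda+iy$, taking real parts, and inserting the value of $\abs{\sinh(\lambda+iy)}^2$ from Step 1 yields the stated closed form. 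For the strict positivity under the hypothesis $\abs\lambda>\abs y$, note first that this hypothesis forces $\lambda\neq0$; then $\lambda\sinh(2\lambda)>2\lambda^2$ (since $u\sinh u>u^2$ for $u\neq0$) while $\abs{y\sin(2y)}\le 2y^2$ (since $\abs{\sin u}\le\abs u$), so $\lambda\sinh(2\lambda)+y\sin(2y)>2\lambda^2-2y^2>0$.

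\textbf{Step 3 (the modulus of $a_{k_1,k_2}$).} On the set where $\lambda+iy\notin\pi i\Z$, i.e.\ where $y\notin\pi\Z$ or $\lambda\neq0$, the first line of~\eqref{ak1k2} applies, and since $\abs{(-1)^{k_1}i^{k_2}}=1$ we obtain $\abs{a_{k_1,k_2}(\lambda+iy)}=\dfrac{\abs{\lambda+iy}^{n+k_1+k_2}\,\abs{\cosh(\lambda+iy)}^{k_1}}{\abs{\sinh(\lambda+iy)}^{n+k_1}}$; substituting the two identities of Step 1 gives the claimed expression. The argument is entirely elementary; the only mild points of attention are keeping track of which points $\lambda+iy$ must be excluded and the strictness in the positivity step, neither of which is a genuine obstacle.
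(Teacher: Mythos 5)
The paper does not give a proof of this lemma — it is explicitly ``stated without proof'' — so there is nothing to compare against; but your argument is correct and complete. Steps 1--3 are the standard computation one would carry out (the product-to-sum identities $\abs{\sinh(\lambda+iy)}^2 = \sinh^2\lambda + \sin^2 y$, $\abs{\cosh(\lambda+iy)}^2 = \sinh^2\lambda+\cos^2 y$, the rationalization of $\coth$ by $\sinh(\lambda-iy)$, and the inequalities $\lambda\sinh(2\lambda) > 2\lambda^2$ for $\lambda\neq0$ and $\abs{y\sin(2y)}\leq 2y^2$ for the strict positivity), and the identification of the excluded set $\{\lambda+iy\in\pi i\Z\}$ with $\{\lambda=0,\ y\in\pi\Z\}$ is handled correctly.
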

	
	In the following lemma we perform the change of the contour of integration in the definition of $h_{k_1,k_2}$. Its proof is a simple adaptation of that of~\cite[Lemma 1.4]{HueberMuller}.
	
	\begin{lemma}\label{stime:lem:8}
		For all $y\in [0,+\infty)\setminus {\pi \N^*}$
		\[
		\begin{split}
		h_{k_1,k_2}(R,t)= \int_{\R} e^{i R\phi_\omega(\lambda+ i y)} a_{k_1,k_2}(\lambda+i y)\,\dd  \lambda+2\pi i \sum_{\substack{k\in\N^* \\k\pi \in [0,y]}} \res{e^{i R\phi_\omega} a_{k_1,k_2}}{k\pi i }.
		\end{split}
		\]
	\end{lemma}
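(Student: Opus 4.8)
The plan is to prove Lemma~\ref{stime:lem:8} by a contour-shift argument in the complex $\lambda$-plane, applying the residue theorem to the holomorphic function $\lambda\mapsto e^{iR\phi_\omega(\lambda)}a_{k_1,k_2}(\lambda)$ on a rectangular contour and then letting the horizontal extent of the rectangle tend to infinity. First I would fix $y\in[0,+\infty)\setminus\pi\N^*$ and, for $A>0$, consider the boundary of the rectangle $Q_A=\{\lambda+i\sigma\colon |\lambda|\le A,\ 0\le\sigma\le y\}$, oriented counterclockwise. The integrand is meromorphic on a neighbourhood of the closed strip $\{0\le\Im\lambda\le y\}$ (recall from~\eqref{ak1k2} that the only possible singularities come from the zeros of $\sinh$, i.e.\ the points $k\pi i$ with $k\in\Z$; the point $\lambda=0$ is removable by the definition of $a_{k_1,k_2}$ and $\phi_\omega$), and the poles enclosed by $Q_A$ are exactly the $k\pi i$ with $k\in\N^*$ and $k\pi\in(0,y]$; since $y\notin\pi\N^*$ we may harmlessly write $k\pi\in[0,y]$. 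The residue theorem then gives
\[
\int_{-A}^{A} e^{iR\phi_\omega(\lambda)}a_{k_1,k_2}(\lambda)\,\dd\lambda - \int_{-A}^{A} e^{iR\phi_\omega(\lambda+iy)}a_{k_1,k_2}(\lambda+iy)\,\dd\lambda + (\text{two vertical sides}) = 2\pi i\sum_{\substack{k\in\N^*\\ k\pi\in[0,y]}}\res{e^{iR\phi_\omega}a_{k_1,k_2}}{k\pi i}.
\]

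The key analytic step is to show that the two vertical segments $\{A+i\sigma\colon 0\le\sigma\le y\}$ and $\{-A+i\sigma\colon 0\le\sigma\le y\}$ contribute nothing in the limit $A\to+\infty$. Here I would invoke Lemma~\ref{stime:lem:5}: for $|\lambda|>|y|$ (in particular for $|\lambda|=A$ large relative to the fixed $y$), one has $\Re[(\lambda+i\sigma)\coth(\lambda+i\sigma)]>0$ with an explicit lower bound that grows like $|\lambda|/2$, so $\Im\,\phi_\omega(\lambda+i\sigma)=\omega\sigma+\Re[(\lambda+i\sigma)\coth(\lambda+i\sigma)]\gtrsim A$ uniformly for $0\le\sigma\le y$; hence $|e^{iR\phi_\omega(\lambda+i\sigma)}|=e^{-R\,\Im\phi_\omega(\lambda+i\sigma)}$ decays like $e^{-cRA}$. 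Meanwhile the amplitude modulus from Lemma~\ref{stime:lem:5}, namely $|a_{k_1,k_2}(\lambda+i\sigma)|=|\lambda+i\sigma|^{n+k_1+k_2}(\sinh(\lambda)^2+\cos(\sigma)^2)^{k_1/2}(\sinh(\lambda)^2+\sin(\sigma)^2)^{-(n+k_1)/2}$, grows at most polynomially in $A$ (the $\sinh^2$ factors in numerator and denominator essentially cancel, leaving a bounded ratio times $|\lambda+i\sigma|^{n+k_1+k_2}$). Since $\sigma$ ranges over the bounded interval $[0,y]$, the length of each vertical side is the fixed constant $y$, so the two vertical contributions are $O(A^{n+k_1+k_2}e^{-cRA})\to 0$ as $A\to+\infty$. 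The same exponential-decay-dominates-polynomial-growth estimate also shows that both horizontal integrals $\int_{-A}^A$ converge absolutely to the corresponding integrals over $\R$ as $A\to+\infty$ (note $\Im\phi_\omega(\lambda)=\Re[\lambda\coth\lambda]>0$ for $\lambda\in\R\setminus\{0\}$, and near $0$ the integrand is bounded), so passing to the limit yields the claimed identity.

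The main obstacle is the bookkeeping at the vertical sides: one must make the lower bound on $\Re[(\lambda+i\sigma)\coth(\lambda+i\sigma)]$ from Lemma~\ref{stime:lem:5} genuinely uniform in $\sigma\in[0,y]$ and large $|\lambda|$, and simultaneously control the amplitude modulus — which is exactly where having Lemma~\ref{stime:lem:5} in hand (both the positivity/growth of $\Re[\lambda\coth\lambda]$ and the closed formula for $|a_{k_1,k_2}|$) does the real work. A secondary subtlety is the degenerate case $y=0$ (or more generally $y<\pi$), where the rectangle is degenerate or encloses no pole: then the sum is empty, the vertical sides have length $y$ which may be zero, and the identity is trivial (both sides equal $h_{k_1,k_2}(R,t)$), so nothing extra is needed. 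Finally, I would remark that the argument is a verbatim adaptation of~\cite[Lemma 1.4]{HueberMuller}, the only new feature being the presence of the parameters $k_1,k_2$ in the amplitude, which are absorbed harmlessly into the polynomial growth bound above.
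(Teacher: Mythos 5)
Your proof is correct and follows the same approach the paper invokes: the paper does not spell the argument out but simply says it is ``a simple adaptation of \cite[Lemma 1.4]{HueberMuller}'', which is precisely the rectangular contour shift plus residue theorem you carry out, with Lemma~\ref{stime:lem:5} supplying both the lower bound on $\Im\,\phi_\omega$ and the closed formula for $\abs{a_{k_1,k_2}}$ that kill the vertical sides. One small imprecision worth flagging, though it does not affect the validity of the argument: the singularities of $e^{iR\phi_\omega}a_{k_1,k_2}$ at $k\pi i$, $k\in\Z^*$, are \emph{essential}, not poles, since $\phi_\omega$ has a simple pole there and hence $e^{iR\phi_\omega}$ has an essential singularity; so the integrand is not meromorphic on the strip, only holomorphic off a discrete set of isolated singularities. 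The residue theorem applies verbatim to isolated essential singularities, so your conclusion stands, but you should not call the function meromorphic or the singularities poles -- indeed this essential nature is exactly why the paper later has to express $\res{e^{iR\phi_\omega}a_{k_1,k_2}}{\pi i}$ as a contour integral in Lemma~\ref{stime:lem:11} rather than by a simple limit formula.
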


	\begin{proof}[Proof of Theorem~\ref{HeisHstimeI}] 
		Define
		\[
		\psi_\omega\:= \phi_\omega(\,\cdot\,+i y_\omega)-\phi_\omega(i y_\omega)
		\]
		and observe that
		\begin{equation*}
			\begin{split}
				\phi_\omega(i y_\omega)&=	i\omega\, y_\omega+i y_\omega\cot (y_\omega) =i\frac{y_\omega^2}{\sin(y_\omega)^2},
			\end{split}
		\end{equation*}
		since $\omega=\theta(y_\omega)$.	Therefore, by Lemma~\ref{stime:lem:8} (recall that $0\leq y_\omega <\pi$, so that there are no residues)
		\[
		h_{k_1,k_2}(R,{t})=e^{-\frac{1}{4}d(x,t)^2} \int_\R e^{i R\psi_\omega(\lambda)}a_{k_1,k_2}(\lambda+i y_\omega)\,\dd  \lambda.
		\]
		Our intent is to apply Theorem~\ref{stationaryphase} to the bounded subsets $\mathscr{F}=\{\psi_\omega \colon \omega\in \left[0, C\right]\}$ and $\mathscr{G}= \{a_{k_1,k_2}(\cdot \,+i y_\omega)\colon \omega\in \left[0, C\right]\}$ of $\Ec(\R)$. Therefore we first verify that the four conditions of its statement hold.
		\begin{enumerate}
			\item[2.] Lemmata~\ref{stime:lem:7} and~\ref{stime:lem:10} imply that $ i\phi''_\omega\left(i y_\omega\right) 	=-\theta'(-y_\omega)< 0$ for all $\omega\in \R_+$. From the definition of $\psi_\omega$ we then get
			\begin{equation}\label{dimStimeIcaso2}
				\psi_\omega(0)=\psi'_\omega(0)=0, \qquad i\psi_\omega''(0)< 0.
			\end{equation}
			
			\item[3.]  Consider the mapping  $\psi\colon \R\times (-\pi,\pi)\ni(\lambda,y)\mapsto \psi_{\theta(y)}(\lambda)$. By~\eqref{dimStimeIcaso2}, $\partial_1\psi(0,y)=0$ and $i\partial_1^2\psi(0,y)< 0$ for all $y\in [0,\pi)$; moreover, $\psi$ is analytic thanks to Lemma~\ref{stime:lem:10}. Therefore, by Taylor's formula we may find two constants $\eta >0$ and $C'>0$ such that $ \abs{\partial_1\psi(\lambda,y)}\geq C' \abs*{\lambda}$ for all $\lambda\in [-2\eta,2\eta]$ and for all $y\in [0,\theta^{-1}(C)]$.
			
			\item[1.]Lemma~\ref{stime:lem:5} implies that
			\[
			\Im\,\psi(\lambda,y) =\frac{\lambda\cosh(\lambda)\sinh(\lambda)-y\cot(y)\sinh(\lambda)^2}{\sinh(\lambda)^2+\sin(y)^2}
			\]
			for all $\lambda\in\R$ and for all $y\in (-\pi,\pi)$, $y\neq0$; moreover, the mapping $(0,\pi)\ni y\mapsto y\cot(y)$ is strictly decreasing and tends to $1$ as $y\to 0^+$. Therefore, if $\lambda\neq 0$ and $y\in [0,\pi)$, then
			\[
			\begin{split}
			\Im\,\psi(\lambda,y) \geq \frac{\lambda\coth(\lambda)-1}{1+\frac{1}{\sinh(\lambda)^2}}>0
			\end{split}
			\]
			since $\lambda\coth(\lambda)-1>0$.
			Observe finally that, since  $\frac{\lambda\coth(\lambda)-1}{1+\frac{1}{\sinh(\lambda)^2}}\sim\abs*{\lambda}$ for $\lambda\to\infty$, the second condition is also satisfied.
			
			\item[4.] {Just observe that} $\mathscr{G}$ is bounded in $L^\infty(\R)$. 
		\end{enumerate}
		{By} Theorem~\ref{stationaryphase}, we then get
		\begin{equation*}
			\begin{split}
				\int_\R e^{i R\psi_\omega(\lambda)} a_{k_1,k_2}(\lambda+i y_\omega)\,\dd  \lambda & = \frac{(2\pi) (4\pi)^n}{|x|}\Psi(\omega) a_{k_1,k_2}(iy_\omega)+ O\left(\frac{1}{\abs{x}^{3}}\right)
			\end{split}
		\end{equation*}
		for $R\to +\infty$, uniformly as $\omega$ runs through $[0,C]$. 
	 \end{proof}

	From now on, we shall consider the case $\omega\to +\infty$. The method of stationary phase cannot be applied directly in this case, since $y_\omega\to \pi$, and $i\pi$ is a pole of the phase (as well as of the amplitude). Although it seems possible to adapt the techniques developed by Li~\cite{Li3} to this situation, our proof follows the idea presented by Hueber and M\"uller~\cite[Theorem 1.3 (i)]{HueberMuller}. {We shall take} advantage of this singularity to get the correct behaviour of $h_{k_1,k_2}$, by means of the residues obtained by Lemma~\ref{stime:lem:8}.

	\subsection{\textbf{II.} Estimates for $\delta \rightarrow 0^+$ and $\kappa \rightarrow +\infty$.}
	We state below the main result of this section.
	
	\begin{theorem}\label{stimeII}
		For $\delta\to 0^+$ and $\kappa\to +\infty$
		\[
		p_{1,k_1,k_2}(x,t)= \frac{(-1)^{k_2} \pi^{k_1+k_2}}{4^{n}(\pi\delta)^{n+k_1-1}\sqrt{2\pi\kappa}} e^{-\frac{1}{4}d(x,t)^2} \left[1+O\left(\frac{1}{\kappa}+\delta \right)  \right].
		\]
		
	\end{theorem}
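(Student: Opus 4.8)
The plan is to work directly with $h_{k_1,k_2}(R,t)$ and the representation given in Lemma \ref{stime:lem:8}, but in the regime $\delta\to 0^+$, $\kappa\to+\infty$, which by Definition \ref{Romegadeltakappa} means $R=\pi|t|\delta^2\to\infty$ while $\omega=|t|/R=1/(\pi\delta^2)\to+\infty$. Since $\omega\to+\infty$ forces $y_\omega=\theta^{-1}(\omega)\to\pi^-$, one cannot shift the contour all the way to $\Im\lambda=y_\omega$ as in the proof of Theorem \ref{HeisHstimeI}, because $i\pi$ is a pole of both $\phi_\omega$ and $a_{k_1,k_2}$. Following Hueber–Müller \cite[Theorem 1.3(i)]{HueberMuller}, I would instead rescale: the natural large parameter here is not $R$ but a combination that keeps the oscillation $e^{i\lambda|t|}$ in balance with the factor $\sinh(\lambda)^{-(n+k_1)}$, which near $\lambda=0$ looks like $\lambda^{-(n+k_1)}$. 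Substituting $\lambda=\delta\sqrt{\pi|t|}\,\mu\cdot(\text{something})$ — more precisely, rescaling so that $i\lambda|t|-R\lambda\coth\lambda$ becomes, after extracting $e^{-d(x,t)^2/4}$, a phase of the form $-\kappa\,F(\mu)$ with $F$ having a nondegenerate minimum at an interior point and $\kappa$ as the large parameter — should convert the integral into one to which Laplace's method (Remark \ref{Laplace}, i.e. Theorem \ref{stationaryphase} with $\Im f$ replaced by $f$) applies with $m=1$.

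Concretely, I would first record the exact formula
\[
d(x,t)^2 = |x|^2\frac{y_\omega^2}{\sin(y_\omega)^2} = 4R\,\frac{y_\omega^2}{\sin(y_\omega)^2},
\]
and extract $e^{-\frac14 d(x,t)^2}=e^{-R y_\omega^2/\sin(y_\omega)^2}$ as in Theorem \ref{HeisHstimeI} by shifting the contour to $\Im\lambda=y_\omega-\epsilon$ for a small buffer (or by a direct algebraic manipulation keeping $\lambda$ real). Then, writing $\lambda = \delta\sqrt{\pi|t|}\,\sigma = \tfrac{\kappa}{2\pi|t|}\cdot(2\pi|t|/\kappa)\dots$ — the point being that $\delta\to 0$ so $\lambda$ ranges over a shrinking window near $0$ where $\coth\lambda\approx 1/\lambda + \lambda/3$ — the residual phase after removing $e^{-d^2/4}$ becomes approximately quadratic plus higher order, with leading coefficient giving the $\sqrt{2\pi\kappa}$ in the denominator. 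The amplitude $a_{k_1,k_2}(\lambda)\approx (-1)^{k_1}i^{k_2}\lambda^{k_2}/\lambda^{n+k_1}$ contributes the factor $\lambda^{k_2-n-k_1}\sim (\delta\sqrt{\pi|t|})^{k_2-n-k_1}$ evaluated at the critical point, and tracking constants — including the shift of the critical point of $\mu\mapsto$ phase, which accounts for the factor $\pi^{k_1+k_2}$ and the sign $(-1)^{k_2}$ — yields the stated prefactor $\frac{(-1)^{k_2}\pi^{k_1+k_2}}{4^n(\pi\delta)^{n+k_1-1}\sqrt{2\pi\kappa}}$. Taking $k=0$ in Theorem \ref{stationaryphase} produces the relative error $O(1/\kappa)$; the extra $O(\delta)$ accounts for the error incurred in the initial contour shift and in replacing $y_\omega$ by $\pi$ in the various subleading terms (since $\pi-y_\omega = O(1/\omega) = O(\delta^2)$, actually, so one must be careful which error dominates — I expect the $O(\delta)$ to come from the asymmetry of the rescaled phase rather than from $\pi-y_\omega$).

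The main obstacle I anticipate is the bookkeeping of the two competing small parameters and verifying the hypotheses of Theorem \ref{stationaryphase} \emph{uniformly} over the relevant range: one must exhibit a single rescaled family of phases $f_{\delta,\kappa}$ and amplitudes $g_{\delta,\kappa}$ lying in bounded subsets of $\mathcal E(V)$ for a fixed neighbourhood $V$ of $0$, with the nondegeneracy $\det f''(0)\neq 0$, the lower bound $|f'(\lambda)|\geq c_2|\lambda|$, and the coercivity $\Im f(\lambda)\gtrsim|\lambda|$ at infinity all holding with constants independent of $\delta,\kappa$ — this is delicate because the natural rescaling depends on $\delta$, and the tails of the integral (where the $\sinh^{-(n+k_1)}$ decay takes over) must be shown negligible separately, presumably by the same crude exponential estimate used in the proof of Theorem \ref{stationaryphase}. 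Once the rescaled integral is set up correctly so that $\kappa$ plays the role of $R$ in \eqref{eqfasestazmodificata}, the conclusion is a direct application of Laplace's method, and the proof concludes by translating back via $p_{1,k_1,k_2}=\frac{2}{(4\pi)^{n+1}}h_{k_1,k_2}$.
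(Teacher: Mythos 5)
Your plan misses the essential step of the paper's argument, which is to push the contour of integration \emph{past} the pole of the integrand at $\lambda=\pi i$ and extract the residue there. In the regime $\delta\to 0^+$, $\kappa\to+\infty$ one has $\omega\to+\infty$ and hence $y_\omega\to\pi^-$, so the complex critical point $iy_\omega$ of $\phi_\omega$ converges to the pole. A shift to $\Im\lambda=y_\omega-\epsilon$ does not pass through the critical point and therefore does not extract the factor $e^{-\frac14 d(x,t)^2}$; nor does the integral on that contour localize near $\lambda=0$, since on the real axis $\phi_\omega'(\lambda)=\omega+\tilde\theta(i\lambda)$ has no real zero for $\omega>0$. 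The dominant contribution lives at the singularity $\lambda=\pi i$, not at $\lambda=0$, and your proposed rescaling $\lambda=\delta\sqrt{\pi|t|}\,\mu=\sqrt{R}\,\mu$ is not a shrinking window in general (in case~\textbf{II}, $R=\kappa\delta/2$ may tend to $0$, stay bounded, or tend to $\infty$). Your power count confirms the mismatch: you predict an amplitude factor $(\sqrt{R})^{k_2-n-k_1}=(\kappa\delta/2)^{(k_2-n-k_1)/2}$, whereas the theorem's prefactor carries $\delta^{-(n+k_1-1)}$, independent of $k_2$. That power of $\delta$ comes exactly from the Laurent expansion of $a_{k_1,k_2}$ at $\pi i$ in the residue computation, not from an evaluation at a critical point near the origin.

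What the paper does, and what you would need, is the following chain: (i) use Lemma~\ref{stime:lem:8} to move the contour to $\Im\lambda=\tfrac{3\pi}{2}$, picking up the residue at $\pi i$; (ii) show via Lemma~\ref{stime:lem:3} that the shifted integral is $O(e^{-3\pi|t|/2})$ and hence negligible after comparison with the residue's size; (iii) express the residue as the loop integral $H_{k_1,k_2}(R,t)=\int_{-\pi}^\pi e^{\kappa q_\delta(-is)}\phi_{\delta,k_1,k_2}(s)\,\dd s$ of Lemma~\ref{stime:lem:11}, where the crucial prefactor $(-1)^{k_2}\pi^{k_2+1}\delta^{-(n+k_1-1)}e^{-R-\pi|t|}$ is produced by the residue calculation; (iv) find the saddle $\sigma_\delta$ of $q_\delta$ (Lemma~\ref{stime:lem:6}), straighten the loop through it (Lemma~\ref{stime:lem:16}), and only then invoke Theorem~\ref{stationaryphase} with $\kappa$ as the large parameter; (v) verify the identity $-R-\pi|t|+\kappa q_\delta(\sigma_\delta)=-\frac14 d(x,t)^2$ via $y_\omega=\pi(1-\delta e^{-\sigma_\delta})$. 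Your intuition that $\kappa$ is the right large parameter and that one should land on Laplace's method is correct, but steps (i)--(iii) are not cosmetic: without the residue you cannot produce $\delta^{-(n+k_1-1)}$, $\pi^{k_1+k_2}$, $(-1)^{k_2}$, or the exponent $e^{-\frac14 d^2}$, and the uniformity over $(\delta,\kappa)$ that worries you at the end is handled precisely because the loop integral in (iii) is compactly supported and depends smoothly on $\delta$ near $\delta=0$.
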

	The proof of Theorem~\ref{stimeII} will be prepared by several lemmata. The first step will be to invoke Lemma~\ref{stime:lem:8}, of which we keep the notation, to move the contour of integration beyond the singularity at $\pi i$; since at $2\pi i$ there is another one, it seems convenient {to stop at $\frac{3\pi i}{2}$}. We first notice that the integral {on $\R+\frac{3\pi i}{2}$}  may be neglected in some circumstances, as the following lemma shows. It is essentially~\cite[Lemma 1.4]{HueberMuller}, so we omit the proof.
	
	\begin{lemma}\label{stime:lem:3}
		There exists a constant $C'>0$ such that
		\[
		\abs*{\int_{\R}e^{i R\phi_\omega\left(\lambda+\frac{3\pi i}{2}\right)} a_{k_1,k_2}\left(\lambda+\dfrac{3\pi i}{2}\right)\,\dd  \lambda}\leq C'e^{-\frac{3\pi \abs{t}}{2}}.
		\]
	\end{lemma}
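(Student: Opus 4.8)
The estimate I want to establish bounds the integral over the horizontal line $\R+\frac{3\pi i}{2}$ by $C' e^{-\frac{3\pi\abs{t}}{2}}$, and the decisive gain of the factor $e^{-\frac{3\pi\abs{t}}{2}}$ must come from the phase. Recall that $\Im\,\phi_\omega(\lambda) = \omega\,\Im\,\lambda + \Re[\lambda\coth(\lambda)]$, so on the contour $\lambda = \mu + \frac{3\pi i}{2}$ with $\mu\in\R$ we get $\Im\,\phi_\omega(\mu+\tfrac{3\pi i}{2}) = \tfrac{3\pi}{2}\omega + \Re[(\mu+\tfrac{3\pi i}{2})\coth(\mu+\tfrac{3\pi i}{2})]$. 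Since $\abs{t}=R\omega$, the term $R\cdot\tfrac{3\pi}{2}\omega$ in $e^{iR\phi_\omega}$ produces exactly the exponential $e^{-\frac{3\pi}{2}R\omega}=e^{-\frac{3\pi\abs{t}}{2}}$ I am after, so the whole game is to show that the remaining factor
\[
\int_\R e^{-R\,\Re[(\mu+\frac{3\pi i}{2})\coth(\mu+\frac{3\pi i}{2})]}\,\bigl\lvert a_{k_1,k_2}(\mu+\tfrac{3\pi i}{2})\bigr\rvert\,\dd\mu
\]
is bounded uniformly in $R$ (say $R\ge 1$) and $\omega$.

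First I would invoke Lemma~\ref{stime:lem:5}, which applies here since $\abs{\mu}$ versus $y=\tfrac{3\pi}{2}$: for $\abs{\mu}>\tfrac{3\pi}{2}$ it gives $\Re[(\mu+\tfrac{3\pi i}{2})\coth(\mu+\tfrac{3\pi i}{2})]>0$ with the explicit formula $\frac{\mu\sinh(2\mu)+\frac{3\pi}{2}\sin(3\pi)}{2(\sinh^2\mu+\sin^2\frac{3\pi}{2})}=\frac{\mu\sinh(2\mu)}{2(\sinh^2\mu+1)}$, which is $\sim\abs{\mu}$ as $\abs{\mu}\to\infty$; for the bounded range $\abs{\mu}\le\tfrac{3\pi}{2}$ one checks directly that this real part is bounded below (the denominator $\sinh^2\mu+1\ge 1$ never vanishes on the line $\tfrac{3\pi i}{2}$, precisely because $\sin\frac{3\pi}{2}=-1\ne0$, so there is no pole). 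Next, from the second identity of Lemma~\ref{stime:lem:5} with $y=\tfrac{3\pi}{2}$,
\[
\bigl\lvert a_{k_1,k_2}(\mu+\tfrac{3\pi i}{2})\bigr\rvert = \frac{\abs{\mu+\frac{3\pi i}{2}}^{n+k_1+k_2}(\sinh^2\mu+\cos^2\frac{3\pi}{2})^{k_1/2}}{(\sinh^2\mu+\sin^2\frac{3\pi}{2})^{(n+k_1)/2}}=\frac{\abs{\mu+\frac{3\pi i}{2}}^{n+k_1+k_2}\sinh(\mu)^{k_1}}{(\sinh^2\mu+1)^{(n+k_1)/2}},
\]
which is a continuous function of $\mu$ that grows only polynomially (of degree $n+k_1+k_2 - n = k_2$ at worst — actually it is $O(\abs{\mu}^{n+k_1+k_2}e^{-n\abs{\mu}})$ as $\abs{\mu}\to\infty$ once one accounts for the $\sinh$ in the denominator; in any case it grows at most polynomially) on the whole line.

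Putting these together: for $\abs{\mu}\ge\tfrac{3\pi}{2}$, say, there is $c>0$ with $\Re[(\mu+\tfrac{3\pi i}{2})\coth(\mu+\tfrac{3\pi i}{2})]\ge c\abs{\mu}$, so $e^{-R\,\Re[\cdots]}\le e^{-c\abs{\mu}}$ for $R\ge1$, and multiplying by the polynomially bounded amplitude gives an integrable function independent of $R$; on the compact range $\abs{\mu}\le\tfrac{3\pi}{2}$ both $e^{-R\,\Re[\cdots]}$ (bounded by $1$ when the real part is $\ge0$, or at worst by $e^{R\cdot\mathrm{const}}$ — one must check the real part stays nonnegative, which it does by the explicit formula) and the amplitude are bounded. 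This yields the uniform bound on the integral, hence the claim with the stated $C'$. The main delicate point — and the only place requiring genuine care — is the behaviour on the bounded $\mu$-range near $\mu=0$: one needs to confirm that $\Re[(\mu+\tfrac{3\pi i}{2})\coth(\mu+\tfrac{3\pi i}{2})]\ge 0$ there (so that $e^{-R\,\Re[\cdots]}\le1$ and no spurious exponential growth in $R$ appears), which follows from writing $\coth(\mu+\tfrac{3\pi i}{2})=\tanh(\mu)$ (since $\coth(z+\tfrac{i\pi}{2})=\tanh z$ and $\coth$ has period $i\pi$, so $\coth(\mu+\tfrac{3\pi i}{2})=\coth(\mu+\tfrac{i\pi}{2})=\tanh\mu$) and then $\Re[(\mu+\tfrac{3\pi i}{2})\tanh\mu]=\mu\tanh\mu\ge0$, which is manifestly nonnegative. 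With that observation the estimate is immediate and uniform, and since this is "essentially \cite[Lemma 1.4]{HueberMuller}" in the stated reference, it is reasonable to omit the routine verification as the authors do.
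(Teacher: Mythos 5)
Your argument is correct and is the natural (and expected) route: the paper explicitly omits the proof, citing Hueber--M\"uller, and the Hueber--M\"uller argument proceeds exactly as you do --- peel off the factor $e^{-\frac{3\pi}{2}R\omega}=e^{-\frac{3\pi}{2}\abs{t}}$ from the phase and bound the remaining integral uniformly in $R$ and $\omega$. The decisive observation that $\coth\bigl(\mu+\tfrac{3\pi i}{2}\bigr)=\tanh\mu$, hence $\Re\bigl[(\mu+\tfrac{3\pi i}{2})\coth(\mu+\tfrac{3\pi i}{2})\bigr]=\mu\tanh\mu\ge 0$ on all of $\R$ (not just $\abs{\mu}>\tfrac{3\pi}{2}$, where the stated hypothesis of Lemma~\ref{stime:lem:5} would formally apply), is exactly the step needed to ensure $e^{-R\,\Re[\cdots]}\le 1$ uniformly; you identify and resolve this correctly. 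One small tightening: the amplitude $\abs{a_{k_1,k_2}(\mu+\tfrac{3\pi i}{2})}=\abs{\mu+\tfrac{3\pi i}{2}}^{n+k_1+k_2}\abs{\sinh\mu}^{k_1}/\cosh(\mu)^{n+k_1}$ does not merely grow polynomially --- since $n\ge1$ it decays like $\abs{\mu}^{n+k_1+k_2}e^{-n\abs{\mu}}$, which you note parenthetically; this makes the amplitude integrable by itself, so the bound in fact holds uniformly for \emph{all} $R\ge0$, with no need to restrict to $R\ge1$.
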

	Hence, {matters are reduced to} the computation of the residue. First of all, define
	\[
	r(\lambda) = \begin{cases} 1+\frac{1}{\lambda}-\pi(1+\lambda)\cot (\pi \lambda),	& \text{if $\lambda\not\in \Z$,}\\
	0, & \text{if $\lambda= 0$,}\end{cases}\\
	\]
	and observe that $r$ is holomorphic on its domain. It will be useful to define also 
	
	\[
	\tilde \phi_{k_1,k_2}(R,\xi )\coloneqq \begin{cases}
	e^{R \, r(-\xi )}\frac{(\pi \xi )^{n+k_1}\cos(\pi \xi )^{k_1}(1-\xi )^{n+k_1+k_2}}{\sin(\pi \xi )^{n+k_1}},  &\text{if $\xi \not \in  \Z$,}\\
	1,	 &\text{if  $\xi =0$,}
	\end{cases}
	\]
	and 
	\begin{equation}\label{phideltak1k2}
		\phi_{\delta, k_1,k_2}(s)\coloneqq e^{-i(n+k_1-1)s} \tilde \phi_{k_1,k_2}(0, \delta e^{i s})
	\end{equation}
	whenever $\delta e^{i s}\not \in\Z^*$. The following lemma may be proved again on the lines of~\cite[Lemma 1.4]{HueberMuller}.

	\begin{lemma}\label{stime:lem:11} For every $\delta<1$
		\begin{equation}\label{Hk1k2}
			2\pi i\,\res{e^{i R \phi_\omega} a_{k_1,k_2}}{\pi i}=\frac{(-1)^{k_2}\pi^{k_2+1} }{\delta^{n+k_1-1}} e^{-R-\pi\abs{t} } \int_{-\pi}^{\pi} e^{\kappa\cos(s)+R r(-\delta e^{i s}) }\phi_{\delta,k_1,k_2}(s)\,\dd  s.
		\end{equation}
	\end{lemma}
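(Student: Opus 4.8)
The strategy is to write the residue as a contour integral over a small positively oriented circle about $\pi i$, transport that circle via a holomorphic change of variable to a circle of radius $\delta$, and recognise the resulting parametrised integral. First I would observe that $\lambda\mapsto\lambda\coth\lambda$ extends holomorphically across $0$ but has a simple pole at each point of $\pi i\Z^*$, where $\sinh$ vanishes to first order; hence $e^{iR\phi_\omega}a_{k_1,k_2}$ is holomorphic on the punctured disc $\{0<\abs{\lambda-\pi i}<\pi\}$, with an essential singularity at $\pi i$, and
\[
\res{e^{iR\phi_\omega}a_{k_1,k_2}}{\pi i}=\frac{1}{2\pi i}\oint_{\abs{\lambda-\pi i}=\rho}e^{iR\phi_\omega(\lambda)}a_{k_1,k_2}(\lambda)\,\dd\lambda
\]
for every small $\rho>0$.

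Next I would perform the substitution $\lambda=\pi i(1-\xi)$, an orientation-preserving affine map carrying the singular set $\pi i\Z^*$ onto $\Z^*$. Using the $\pi i$-periodicity of $\coth$ together with the identities $\cosh(\pi i+z)=-\cosh z$ and $\sinh(\pi i+z)=-\sinh z$, one gets $\coth\bigl(\pi i(1-\xi)\bigr)=i\cot(\pi\xi)$, hence $\lambda\coth\lambda=-\pi(1-\xi)\cot(\pi\xi)$; feeding this into $iR\phi_\omega=iR\omega\lambda-R\lambda\coth\lambda$ and using the definition of $r$ together with $R\omega=\abs{t}$ yields
\[
iR\phi_\omega\bigl(\pi i(1-\xi)\bigr)=-R-\pi\abs{t}+R\,r(-\xi)+\pi\abs{t}\,\xi+\frac{R}{\xi},
\]
while a parallel, purely algebraic computation, tracking the powers of $i$ and $\pi$, gives
\[
a_{k_1,k_2}\bigl(\pi i(1-\xi)\bigr)=(-1)^{k_2}\pi^{k_2}\,\xi^{-(n+k_1)}\,\tilde\phi_{k_1,k_2}(0,\xi).
\]
In the $\xi$-variable the integrand is holomorphic on $0<\abs{\xi}<1$ (apart from the essential singularity at $0$, its nearest singularities sit at $\xi=\pm1$), so when $\delta<1$ the contour may be enlarged to $\abs{\xi}=\delta$ without changing the integral.

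Finally I would parametrise $\xi=\delta e^{is}$, $s\in[-\pi,\pi]$. The arithmetic heart of the matter is the identity $\pi\abs{t}\,\delta=R/\delta=\kappa/2$, which turns the essentially singular part into $\pi\abs{t}\,\xi+R/\xi=\kappa\cos s$; substituting also $\dd\lambda=-\pi i\,\dd\xi$, $\dd\xi=i\delta e^{is}\,\dd s$ and $\xi^{-(n+k_1)}=\delta^{-(n+k_1)}e^{-i(n+k_1)s}$, pulling $\delta^{-(n+k_1-1)}$ out of the integral and absorbing $e^{-i(n+k_1-1)s}$ into $\tilde\phi_{k_1,k_2}(0,\delta e^{is})$ to form $\phi_{\delta,k_1,k_2}(s)$, the leftover constants $i$, $\pi$ and $(-1)^{k_2}$ combine to give precisely~\eqref{Hk1k2}. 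The argument is entirely elementary and, as indicated, mirrors~\cite[Lemma 1.4]{HueberMuller}; the only delicate points are the careful bookkeeping of the many factors of $i$ and $\pi$, the observation that the essential singularity at $\pi i$ is harmless since we integrate along a closed loop around it, and the fact that the hypothesis $\delta<1$ is exactly what licenses the enlargement of the contour to radius $\delta$ — the nearest poles of the $\xi$-integrand sitting at $\xi=\pm1$.
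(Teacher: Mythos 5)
Your proof is correct and follows the approach the paper intends, which the authors leave to the reader by appealing to Hueber--M\"uller's Lemma~1.4. All the computations check out: the identity $\coth(\pi i(1-\xi))=i\cot(\pi\xi)$ and its companions for $\cosh,\sinh$, the recombination $\pi R(1-\xi)\cot(\pi\xi)=R\,r(-\xi)-R+R/\xi$, the constant $(-1)^{k_1}i^{k_2}(\pi i)^{n+k_1+k_2}(-1)^{k_1}/i^{n+k_1}=(-1)^{k_2}\pi^{n+k_1+k_2}$ simplifying $a_{k_1,k_2}(\pi i(1-\xi))$ to $(-1)^{k_2}\pi^{k_2}\xi^{-(n+k_1)}\tilde\phi_{k_1,k_2}(0,\xi)$, the orientation-preserving nature of the affine change $\lambda=\pi i(1-\xi)$, the freedom to move the circle to $\lvert\xi\rvert=\delta$ thanks to holomorphy in the punctured unit disc, and finally the identity $\pi\lvert t\rvert\,\delta=R/\delta=\kappa/2$ yielding $\pi\lvert t\rvert\,\xi+R/\xi=\kappa\cos s$, after which the Jacobian factor $(-\pi i)\cdot i\delta e^{is}=\pi\delta e^{is}$ merges with $e^{-i(n+k_1)s}$ to give $\delta\,e^{-i(n+k_1-1)s}$ and the stated prefactor.
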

	Therefore, it remains only to estimate the integral in~\eqref{Hk1k2}, namely
	\begin{equation}\label{stime:eq:9}
		\begin{split}
			H_{k_1,k_2}(R,t)\coloneqq &\int_{-\pi}^{\pi} e^{\kappa\cos(s)+R r(-\delta e^{is}) }\phi_{\delta,k_1,k_2}(s)\,\dd  s =\int_{-\pi}^\pi e^{\kappa q_\delta( -i s)} \phi_{\delta,k_1,k_2}(s)\,\dd  s,
		\end{split}
	\end{equation}
	where 
	\begin{equation}\label{qdeltazeta}
		q(\delta,\zeta) = q_\delta(\zeta)\coloneqq \cosh(\zeta)+\frac{\delta}{2}r(-\delta e^{-\zeta}). 
	\end{equation} 
	Notice that we may apply Theorem~\ref{stationaryphase} only when $\kappa\to+\infty$, and this is why we confined ourselves to the case where $\delta\to 0^+$ (and we shall assume $0<\delta<1$)  and $\kappa\to+\infty$.
	
	Again for technical convenience, we shall reverse the dependence relation between $(\delta,\kappa)$ and $(R,\abs{t})$, thus assuming that $\delta$ and $\kappa$ are ``independent variables''. Indeed, $\delta$ and $\kappa$ completely describe our problem, since
	\[
	\abs{t}=\frac{\kappa}{2\pi\delta},\qquad R=\frac{\kappa \delta}{2}, 
	\]
	and $\abs{t}+R\to+\infty$ if $\delta\to 0^+$ and $\kappa\to+\infty$. We shall sometimes let $\delta$ assume complex values. The following lemma is essentially{~\cite[Lemma 1.2]{HueberMuller}}. We present a slightly shorter proof.
	\begin{lemma}	\label{stime:lem:6} 
		$q$ is holomorphic on the set $\{(\delta,\zeta)\in\C\times \C|\, \delta e^{-\zeta}\not\in\Z^*\}$. Moreover there exist two constants $\delta_1\in (0,1)$ and $\eta_1>0$ such that for all $\delta\in \mathrm{B}_\C(0,\delta_1)$ there is a unique $ \sigma_\delta \in \mathrm{B}_\C(0,\eta_1)$ such that $q'_\delta( \sigma_\delta )=0$. Then the mapping $\mathrm{B}_\C(0,\delta_1)\ni\delta\mapsto \sigma_\delta$ is holomorphic and real on $(-\delta_1,\delta_1)$. Finally, $\sigma_\delta = O(\delta^2)$ and $q_\delta(\sigma_\delta)=1+O(\delta^2)$ for $\delta\to 0$.
	\end{lemma}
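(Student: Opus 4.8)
The plan is to establish the four assertions in order. First, holomorphy: one writes $q(\delta,\zeta) = \cosh(\zeta) + \tfrac{\delta}{2} r(-\delta e^{-\zeta})$, and since $r$ is holomorphic away from $\Z$ (and extends holomorphically across $0$ by its definition, as $1+\tfrac1\lambda - \pi(1+\lambda)\cot(\pi\lambda)$ has a removable singularity there), the composition $\delta\mapsto r(-\delta e^{-\zeta})$ is holomorphic precisely when $-\delta e^{-\zeta}\notin\Z^*$, i.e.\ $\delta e^{-\zeta}\notin\Z^*$; the product with the entire function $\delta/2$ and the sum with $\cosh\zeta$ preserve this. Second, the implicit function argument for the critical point: compute $q_\delta'(\zeta) = \sinh(\zeta) + \tfrac{\delta}{2}\,\partial_\zeta\bigl[r(-\delta e^{-\zeta})\bigr]$; at $\delta=0$ this is $\sinh(\zeta)$, which vanishes only at $\zeta=0$ in a neighbourhood of the origin, with $q_0''(0)=\cosh(0)=1\neq 0$. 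Since $(\delta,\zeta)\mapsto q_\delta'(\zeta)$ is holomorphic on a neighbourhood of $(0,0)$ in $\C^2$ (as $\delta e^{-\zeta}\to 0\notin\Z^*$ there), the holomorphic implicit function theorem yields $\delta_1\in(0,1)$ and $\eta_1>0$ and a unique holomorphic map $\delta\mapsto\sigma_\delta$ on $\mathrm{B}_\C(0,\delta_1)$ with $\sigma_0=0$ and $q_\delta'(\sigma_\delta)=0$; uniqueness of the zero in $\mathrm{B}_\C(0,\eta_1)$ follows by shrinking $\delta_1$ so that, by continuity, $q_\delta'$ stays close to $\sinh$ and hence has exactly one zero in that ball (e.g.\ by Rouché or by the inverse function theorem applied to $q_0'$).

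Third, reality on $(-\delta_1,\delta_1)$: note that $r$ has real Taylor coefficients at $0$, hence $r(-\delta e^{-\zeta})$ is real for real $\delta,\zeta$ small, so $q_\delta$ restricts to a real-analytic function of a real variable for $\delta\in(-\delta_1,\delta_1)$; by uniqueness of the critical point, $\sigma_\delta$ must coincide with the (real) critical point of this real function, so $\sigma_\delta\in\R$. Equivalently, $\overline{\sigma_{\bar\delta}}$ also solves $q_\delta'=0$ in $\mathrm{B}_\C(0,\eta_1)$, so by uniqueness $\overline{\sigma_{\bar\delta}}=\sigma_\delta$, giving $\sigma_\delta\in\R$ when $\delta\in\R$. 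Fourth, the asymptotics: since $r(0)=0$, we have $r(-\delta e^{-\zeta}) = O(\delta)$ uniformly for $\zeta$ near $0$, hence $q_\delta'(\zeta) = \sinh(\zeta) + O(\delta^2)$ and also $\partial_\zeta q_\delta'(\zeta) = \cosh(\zeta)+O(\delta^2) = 1+O(\delta^2)+O(\zeta)$; plugging $\zeta=\sigma_\delta$ into $q_\delta'(\sigma_\delta)=0$ gives $\sinh(\sigma_\delta) = O(\delta^2)$, whence $\sigma_\delta = O(\delta^2)$. Finally $q_\delta(\sigma_\delta) = \cosh(\sigma_\delta) + \tfrac{\delta}{2}r(-\delta e^{-\sigma_\delta}) = 1 + O(\sigma_\delta^2) + O(\delta^2) = 1 + O(\delta^2)$.

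I expect the only mildly delicate point to be the \emph{uniqueness} of $\sigma_\delta$ within a fixed ball $\mathrm{B}_\C(0,\eta_1)$ uniformly in $\delta$ — the bare implicit function theorem only gives local uniqueness near $(0,\sigma_0)$, so one must argue separately (via continuity/Rouché on a fixed small circle $|\zeta|=\eta_1$, using that $|\sinh\zeta|$ is bounded below there while the perturbation is $O(\delta^2)$) that no spurious critical point enters the ball as $\delta$ varies. Everything else is routine Taylor expansion and bookkeeping; since the authors advertise this as a shorter variant of~\cite[Lemma 1.2]{HueberMuller}, presumably the streamlining is exactly in handling holomorphy and the critical point together via the holomorphic implicit function theorem rather than by hand.
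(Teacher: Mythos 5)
Your proposal is correct and follows essentially the same route as the paper: holomorphy of $q$ from that of $r$, the holomorphic implicit function theorem applied at $(0,0)$ using $\partial_2 q(0,0)=0$ and $\partial_2^2 q(0,0)=1$, the Schwarz-reflection argument $q_\delta'(\overline{\sigma_\delta})=\overline{q_\delta'(\sigma_\delta)}=0$ plus uniqueness for reality, and then the $O(\delta^2)$ asymptotics. Two small remarks. First, the ``delicate point'' you flag about uniform uniqueness in a fixed ball is not actually an issue for the paper: the version of the implicit function theorem cited, namely Proposition~6.1 of IV.5.6 in Cartan's book, is exactly the strong statement that produces a pair of radii $(\delta_1,\eta_1)$ such that for every $\delta$ in the first disc there is a \emph{unique} solution in the second, so the Rouch\'e argument you propose as a backstop is already built into the quoted theorem. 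Second, for the asymptotics the paper proceeds slightly differently: it differentiates the implicit equation to get $\tfrac{d}{d\delta}\sigma_\delta\big|_{\delta=0}=0$ and $\tfrac{d}{d\delta}q_\delta(\sigma_\delta)\big|_{\delta=0}=\partial_1q(0,0)+\partial_2q(0,0)\tfrac{d}{d\delta}\sigma_\delta\big|_{\delta=0}=0$, then applies Taylor's formula; you instead plug in directly, observing $q_\delta'(\zeta)=\sinh\zeta+\tfrac{\delta^2}{2}e^{-\zeta}r'(-\delta e^{-\zeta})$ and $r(0)=0$. Both are equally short. (One tiny slip in phrasing: it is the chain-rule factor $\delta e^{-\zeta}$ in $\partial_\zeta[r(-\delta e^{-\zeta})]$, not $r(0)=0$, that makes $q_\delta'(\zeta)-\sinh\zeta$ an $O(\delta^2)$; the fact $r(0)=0$ is what you need for $q_\delta(\sigma_\delta)-\cosh\sigma_\delta=O(\delta^2)$. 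Your conclusions are nevertheless correct.)
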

	\begin{proof}
		$q$ is holomorphic since $r$ is. Furthermore, $\partial_2 q(0,0)=0$ and $\partial_2^2 q(0,0)=1$. Therefore, the implicit function theorem (cf.~\cite[Proposition 6.1 of IV.5.6]{Cartan}) implies the existence of some $\delta_1$ and $\eta_1$ as in the statement, the holomorphy of the mapping $\delta \mapsto \sigma_\delta$, and that $\frac{d}{d \delta} \sigma_\delta\vert_{\delta=0}=0$. Notice also that $\sigma_0=0$, so that $\sigma_\delta = O(\delta^2)$ for $\delta\to 0$ by Taylor's formula.
		
		Since $q_\delta$ is real on real numbers, $q'_\delta(\overline{\sigma_\delta})=\overline {q'_\delta(\sigma_\delta)}=0$; thus $\sigma_\delta=\overline{\sigma_\delta}$ for the uniqueness of $\sigma_\delta$, and hence $\sigma_\delta\in\R$ for all $\delta\in (-\delta_1,\delta_1)$.
		
		The last assertion follows from Taylor's formula, since $q_0(\sigma_0)=1$ and $\frac{d}{d \delta} q_\delta(\sigma_\delta)\vert_{\delta=0}= \partial_1 q(0,0)+\partial_2 q(0,0)\frac{d}{d \delta} \sigma_\delta\vert_{\delta=0}=0$.
	 \end{proof}
	{The contour of integration can now be changed} in order to apply the method of stationary phase. For the remainder of this section, we keep $\delta_1$ and $\eta_1$ of Lemma~\ref{stime:lem:6} fixed.
	
	\begin{lemma}\label{stime:lem:16}
		Let $\tau \in C_c^\infty(\R)$ such that $\chi_{[-\frac{\pi}{2}, \frac{\pi}{2}]}\leq \tau\leq \chi_{[\pi,\pi]}$. Define, for all $\delta\in (-\delta_1,\delta_1)$, the path  $\gamma_\delta(s)\coloneqq s+i\sigma_\delta\, \tau(s)$, and
		\begin{gather*}
			F_\delta (s)\coloneqq -i q_\delta (-i\gamma_\delta(s))+i q_\delta(\sigma_\delta) \qquad \text{and}\qquad
			\psi_{\delta,k_1,k_2}\coloneqq (\phi_{\delta,k_1,k_2}\circ \gamma_\delta)\, \gamma_\delta'.
		\end{gather*}
		Then
		\[
		H_{k_1,k_2}(R,t)= e^{\kappa \,q_\delta(\sigma_\delta)} \int_{-\pi}^{\pi}  e^{i\kappa  F_\delta\left(s\right)}\psi_{\delta,k_1,k_2}(s)\,\dd  s.
		\] 
	\end{lemma}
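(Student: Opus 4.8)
The plan is to read the asserted identity as a contour deformation and justify it by Cauchy's theorem. Write $G_\delta(\zeta)\coloneqq e^{\kappa q_\delta(-i\zeta)}\,\phi_{\delta,k_1,k_2}(\zeta)$, where $\phi_{\delta,k_1,k_2}(\zeta)=e^{-i(n+k_1-1)\zeta}\,\tilde\phi_{k_1,k_2}(0,\delta e^{i\zeta})$ is the natural holomorphic extension of~\eqref{phideltak1k2}. From~\eqref{stime:eq:9}, $H_{k_1,k_2}(R,t)=\int_{-\pi}^{\pi}e^{\kappa q_\delta(-is)}\phi_{\delta,k_1,k_2}(s)\,\dd s=\int_{[-\pi,\pi]}G_\delta(\zeta)\,\dd\zeta$, the integral of $G_\delta$ over the straight segment. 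On the other hand, unwinding the definitions of $F_\delta$ and $\psi_{\delta,k_1,k_2}$ one has $i\kappa F_\delta(s)=\kappa q_\delta(-i\gamma_\delta(s))-\kappa q_\delta(\sigma_\delta)$, so
\[
e^{\kappa q_\delta(\sigma_\delta)}\int_{-\pi}^{\pi}e^{i\kappa F_\delta(s)}\psi_{\delta,k_1,k_2}(s)\,\dd s=\int_{-\pi}^{\pi}G_\delta(\gamma_\delta(s))\,\gamma_\delta'(s)\,\dd s=\int_{\gamma_\delta}G_\delta(\zeta)\,\dd\zeta .
\]
Thus the lemma reduces to the equality $\int_{[-\pi,\pi]}G_\delta=\int_{\gamma_\delta}G_\delta$. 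Since $\supp\tau\subseteq[-\pi,\pi]$ and $\tau$ is continuous, $\tau(\pm\pi)=0$, hence $\gamma_\delta(\pm\pi)=\pm\pi$: the segment and the path $\gamma_\delta$ share their endpoints.

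It then remains to check that $G_\delta$ is holomorphic on a simply connected open set containing both the segment and $\gamma_\delta$ — for instance a neighbourhood of the region swept out by the homotopy $(s,\theta)\mapsto s+i\theta\sigma_\delta\tau(s)$, $\theta\in[0,1]$ — after which homotopy invariance of the integral of a holomorphic function with fixed endpoints yields the claim. For a point $\zeta=-is+\theta\sigma_\delta\tau(s)$ of the homotopy one computes $\delta e^{-\zeta}=\delta e^{is}e^{-\theta\sigma_\delta\tau(s)}$, whose modulus is $\delta e^{-\theta\sigma_\delta\tau(s)}\leq\delta e^{|\sigma_\delta|}$. By Lemma~\ref{stime:lem:6} we have $\sigma_\delta=O(\delta^2)$, so after shrinking $\delta_1$ if necessary we may assume $\delta e^{|\sigma_\delta|}<1$ for every $\delta\in(-\delta_1,\delta_1)$; in particular $\delta e^{-\zeta}\notin\Z^*$ all along the homotopy. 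By~\eqref{qdeltazeta} and the holomorphy of $r$ away from $\Z^*$, the map $\zeta\mapsto q_\delta(-i\zeta)$ is holomorphic there; and $\phi_{\delta,k_1,k_2}(\zeta)$ is holomorphic as well, since $\tilde\phi_{k_1,k_2}(0,\cdot)$ is holomorphic off $\Z^*$ (the apparent singularity at $0$ being removable). Hence $G_\delta$ is holomorphic on the required open set.

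Combining these observations, $\int_{[-\pi,\pi]}G_\delta=\int_{\gamma_\delta}G_\delta$, which, after reinstating the constant factor $e^{\kappa q_\delta(\sigma_\delta)}$, is exactly the claimed identity. The only genuinely delicate point is the holomorphy verification — keeping $|\delta e^{-\zeta}|<1$, so that the argument of $r$ and of $\tilde\phi_{k_1,k_2}(0,\cdot)$ never hits $\Z^*$, along the whole deformation — and this is precisely where the estimate $\sigma_\delta=O(\delta^2)$ of Lemma~\ref{stime:lem:6} and a harmless shrinking of $\delta_1$ are used; everything else is bookkeeping with the definitions and Cauchy's theorem.
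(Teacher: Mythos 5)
Your proof is correct, and it supplies the argument that the paper leaves implicit: the paper states Lemma~\ref{stime:lem:16} without proof, relying on the reader to see it as a contour deformation. Your identification of the asserted identity as $\int_{[-\pi,\pi]}G_\delta=\int_{\gamma_\delta}G_\delta$ with $G_\delta(\zeta)=e^{\kappa q_\delta(-i\zeta)}\phi_{\delta,k_1,k_2}(\zeta)$, the observation that $\tau(\pm\pi)=0$ forces $\gamma_\delta(\pm\pi)=\pm\pi$, and the check that $\abs{\delta e^{i\zeta}}\le\delta e^{\abs{\sigma_\delta}}<1$ along the whole linear homotopy (so that the arguments of $r$ and of $\tilde\phi_{k_1,k_2}(0,\cdot)$ never meet $\Z^*$) together give exactly what Cauchy's theorem needs. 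The only mild caveat is that the paper declares $\delta_1$ fixed after Lemma~\ref{stime:lem:6}, whereas you shrink it if necessary; this is harmless and is in any case consistent with the paper's own later introduction of $\delta_2\le\delta_1$ in the proof of Theorem~\ref{stimeII}, but a slightly cleaner phrasing would be to note that the conclusion holds for $\abs{\delta}<\delta_1'$ with some $\delta_1'\le\delta_1$, which suffices for all downstream uses. One small typo to fix: at one point you write ``$\zeta=-is+\theta\sigma_\delta\tau(s)$'' when you clearly mean either $\zeta=s+i\theta\sigma_\delta\tau(s)$ (the point of the homotopy) or $-i\zeta=-is+\theta\sigma_\delta\tau(s)$ (the argument fed into $q_\delta$); the subsequent computation of $\abs{\delta e^{-(-i\zeta)}}$ is correct, so this is purely notational.
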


	\begin{proof}[Proof of Theorem \ref{stimeII}]
		We shall apply Theorem~\ref{stationaryphase}  to the bounded subsets  $\mathscr{F}= \{ F_\delta\colon \delta\in (0,\delta_2)\}$ and $\mathscr{G}= \{\psi_{\delta,k_1,k_2}\colon \delta\in (0,\delta_2)\}$ of $\Ec((-\pi,\pi))$, depending on some $\delta_2$ to be fixed later. Hence we check that the four conditions of the statement are satisfied.
		\begin{enumerate}
			\item [1.] The mapping $F\colon (-\delta_1,\delta_1)\times \R \ni (\delta,s )\mapsto  F_\delta(s )$ is of class $C^\infty$, and $\partial_2^2 F(0,0)=i$; thus we may find  $\delta_2\in (0,\delta_1)$, $\eta_2\in \left(0,\frac{\pi}{2}\right)$ and $C''>0$  such that $\Im \,\partial_2^2 F(\delta,s )\geq 2 C''$ for all $\delta\in [-\delta_2,\delta_2]$ and for all $s \in [-2\eta_2,2\eta_2]$. From Taylor's formula then 
			
			\[\Im\, F(\delta,s )=\int_0^s  \partial_2^2\, \Im\,  F(\delta,\tau)(s -\tau)\,\dd  \tau\geq C'' s ^2\]
			for all $s \in [-2\eta_2,2\eta_2]$ and for all $\delta\in [-\delta_2,\delta_2]$. {Since} $\Im \, F(0,s )=1-\cos(s )$ for all $s \in [-\pi,\pi]$,  by reducing $\delta_2$ and $C''$ if necessary one may assume that $\Im  \,F(\delta,s )\geq C''\pi^2\geq C'' s ^2$ for all $s \in \R$ such that $2\eta_2\leq \abs*{s}\leq \pi$   and for all $\delta\in [-\delta_2,\delta_2]$.
			
			\item [2.] It is immediately seen that $ F_\delta(0)= F_\delta'(0)=0$ by definition.
			
			\item[3.] For every $\delta\in [-\delta_2,\delta_2]$ and $s\in [-2 \eta_2,2 \eta_2]$
			\[\abs{\partial_2 F(\delta,s)}\geq\abs{\partial_2\,\Im \, F(\delta, s)}=\abs*{\int_{0}^s \partial_2^2\, \Im \,F(\delta,\tau)\,\dd \tau}\geq 2 C''\abs{s}.\] 
			\item[4.] Just observe that $\mathscr{G}$ is bounded in $L^\infty((-\pi,\pi))$.
		\end{enumerate}
		{By} Theorem \ref{stationaryphase}, then,
		\[ \begin{split}
		\int_{-\pi}^\pi e^{i\kappa F_\delta(s)}\tau_2(s)\psi_{\delta,k_1,k_2}(s)\,\dd s&=\sqrt{\frac{2\pi i}{\kappa F_\delta''(0) }} \psi_{\delta,k_1,k_2}(0)+O\left(\frac{1}{\kappa^{3/2}} \right)\end{split}. \]
		It is then easily seen that $ F_\delta''(0)= i q_\delta''(\sigma_\delta)=i(1 + O(\delta))$ and $\psi_{\delta,k_1,k_2}(0)=\phi_{\delta, k_1,k_2}(i\sigma_\delta)=1+O(\delta)$ for $\delta \rightarrow 0^+$.
		
		Now, by construction,
		\[
		-R-\pi \abs{t}+\kappa q_\delta(s)=i R\,\phi_\omega(\pi i (1-\delta e^{-s}))
		\]
		for $s$ in a neighbourhood of $\sigma_\delta$. Take $\delta_3\in (0,\delta_2]$ so that $ (1-\delta e^{-\sigma_\delta})\in (-1,1)$ for all $\delta\in [0,\delta_3]$, and fix $\delta\in (0,\delta_3)$ and {$t\neq 0$}. We shall prove that \[y_\omega= \pi (1- \delta e^{-\sigma_\delta}).\] Indeed, $y_\omega$ is the unique element of $(-\pi,\pi)$ such that $\phi'_\omega(i y_\omega)=0$; furthermore, $\pi(1- \delta e^{-\sigma_\delta})\in (-\pi,\pi)$ for the choice of $\delta_3$, and $-R\,\pi  \,\delta\, e^{-\sigma_\delta}\phi'_\omega(\pi i(1- \delta e^{-\sigma_\delta}))=\kappa\, q_\delta'(\sigma_\delta)=0$. Therefore, $y_\omega= \pi (1-\delta e^{-\sigma_\delta})$. Finally, equality holds by analyticity whenever both sides are defined. It then follows that
		\begin{equation}\label{esponente}
			-R-\pi \abs{t} +\kappa q_\delta(\sigma_\delta)=i R\phi_\omega(i y_\omega)=-\frac{1}{4}d(x,t)^2.
		\end{equation}
		Finally observe that, by definition of $\kappa$ and $\delta$, and by Lemma~\ref{stime:lem:6},
		\[
		-\frac{3\pi \abs{t}}{2}+R+\pi\abs{t}-\kappa q_\delta(\sigma_\delta) {+ \log \kappa } \leq -{\frac{\kappa}{2\pi \delta}}\left[ \frac{\pi}{2}- \pi\delta^2 + 2\pi \delta \left(1+O\left(\delta^2\right) \right) {- 2\pi \delta \frac{\log \kappa}{\kappa}} \right], 
		\]
		which tends to $-\infty$ as $\delta\to 0^+$ and $\kappa\to+\infty$. This means that
		\[e^{-\frac{3\pi \abs{t}}{2}}=o\left(\frac{e^{-R-\pi \abs{t}+\kappa q_\delta(\sigma_\delta)}}{\kappa}\right)\] for $\kappa\to +\infty$, uniformly as $\delta$ runs through $(0,\delta_2]$. Our assertion is then a consequence of~Lemmata~\ref{stime:lem:8} and~\ref{stime:lem:3}.
	 \end{proof}

	\subsection{\textbf{III} and \textbf{IV}. Estimates for $\delta\to 0^+$ and $\kappa$ bounded.}
Strictly speaking, cases {\bf III} and {\bf IV} have already been considered together by Hueber and M\"uller~\cite[Theorem 1.3 (ii)]{HueberMuller}. Despite this, we shall follow a different approach similar to that of Li~\cite{Li1}, which will allow us to get slightly better results.
	
	We first recall that, for all $\nu\in \Z$ and $\zeta\in\C$, the modified Bessel function $I_\nu$ of order $\nu$ is defined as
	\[
	I_\nu(\zeta)=\sum_{k\in\N} \frac{\zeta^{2k+\nu}}{2^{2 k+\nu}k!\,\Gamma(k+\nu+1)}.
	\]
	If $s>0$, then also
	\[
	I_\nu(s)=\frac{1}{2\pi}\int_{-\pi}^\pi e^{s\cos(\xi)-i \nu\xi}\,\dd  \xi,
	\]
	as one can verify from~\cite[7.3.1 (2)]{Erdelyi2} by applying the change of variables $\psi=\frac{\pi}{2}-\phi$ and by taking into account the relationship~\cite[7.2.2 (12)]{Erdelyi2} between $I_\nu=I_{-\nu}$ and $J_\nu$, and also the periodicity of the integrand. {Notice that for $s>0$ and $\nu \in \Z$, $I_\nu(s)$ is strictly positive unless $s=0$ and $\nu \neq 0$.} The main result of this section is the following.
	
	\begin{theorem}\label{stimeIIIeIV}
Fix $C>1$. If $\delta \to 0^+$ while {${1}/{C}\leq \kappa \leq C$}, then
\begin{equation}\label{stimeIII}
			{p_{1,k_1,k_2}(x,t) =\frac{(-1)^{k_2} \pi^{k_1+k_2}}{4^n (\pi\delta)^{n+k_1-1}} e^{-\frac{1}{4}d(x,t)^2 } e^{-\kappa } I_{{n+k_1-1}}(\kappa) \left[1 +O(\delta)\right].}
		\end{equation}
When $\kappa\to 0^+$ and $\abs{t}\to +\infty$
		\begin{equation}\label{stimeIV}
			\begin{split}
				p_{1,k_1,k_2}(x,t) &=\frac{(-1)^{k_2}\pi^{k_1+k_2}}{4^n(n+k_1-1)!} \abs*{t}^{n+k_1-1} e^{-\frac{1}{4}d(x,t)^2 }\left[1+O\left(\frac{1}{\abs{t}}+\kappa \right)  \right] . 
			\end{split}	
		\end{equation}	
	\end{theorem}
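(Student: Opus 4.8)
\emph{Reduction to the residue.} Since $\delta\to0^+$ in both statements we may assume $0<\delta<1$ and apply Lemma~\ref{stime:lem:8} with $y=\frac{3\pi}{2}$, crossing the singularity of the phase at $\pi i$ but staying below the one at $2\pi i$. By Lemma~\ref{stime:lem:3} the resulting integral over $\R+\frac{3\pi i}{2}$ is $O(e^{-3\pi|t|/2})$, and by Lemma~\ref{stime:lem:11} the residue at $\pi i$ equals $\frac{(-1)^{k_2}\pi^{k_2+1}}{\delta^{n+k_1-1}}\,e^{-R-\pi|t|}\,H_{k_1,k_2}(R,t)$ with $H_{k_1,k_2}$ as in~\eqref{stime:eq:9}. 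Two ingredients carry over, with the same proof, from Theorem~\ref{stimeII}, where only $\delta\to0^+$ was exploited: the identity $y_\omega=\pi(1-\delta e^{-\sigma_\delta})$, which together with~\eqref{esponente} and $q_\delta(\sigma_\delta)=1+O(\delta^2)$ gives
\[
e^{-R-\pi|t|}=e^{-\frac14 d(x,t)^2}e^{-\kappa q_\delta(\sigma_\delta)}=e^{-\frac14 d(x,t)^2-\kappa}\bigl(1+O(\kappa\delta^2)\bigr);
\]
and the inequality $\frac14 d(x,t)^2\le \pi|t|+R$, which, since $\pi|t|=\kappa/(2\delta)\to+\infty$ in both regimes, makes $e^{-3\pi|t|/2}$ negligible against $\delta^{-(n+k_1-1)}e^{-\frac14 d(x,t)^2-\kappa}$. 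Everything thus reduces to the asymptotic behaviour of $H_{k_1,k_2}(R,t)$, and then to rewriting $p_{1,k_1,k_2}=\frac{2}{(4\pi)^{n+1}}h_{k_1,k_2}$ and tidying the constants.

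\emph{A Bessel expansion for $H_{k_1,k_2}$.} Using $R=\frac{\kappa\delta}{2}$ and~\eqref{phideltak1k2} one has
\[
H_{k_1,k_2}(R,t)=\int_{-\pi}^{\pi} e^{\kappa\cos s}\,e^{-i(n+k_1-1)s}\,\Theta_{\kappa,\delta}(\delta e^{is})\,\dd s,\qquad \Theta_{\kappa,\delta}(u):=e^{\frac{\kappa\delta}{2}r(-u)}\,\tilde\phi_{k_1,k_2}(0,u),
\]
where $\Theta_{\kappa,\delta}$ is holomorphic on a fixed neighbourhood of $0$ (independent of bounded $\kappa,\delta$) and $\Theta_{\kappa,\delta}(0)=1$. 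Expanding $\Theta_{\kappa,\delta}(\delta e^{is})=\sum_{j\ge0}c_j(\kappa,\delta)\,\delta^j e^{ijs}$, with $c_0=1$ and $|c_j|$ dominated by a fixed geometric sequence, and invoking the integral representation of the modified Bessel functions together with $I_\nu=I_{-\nu}$, we get
\[
H_{k_1,k_2}(R,t)=2\pi\sum_{j\ge0}c_j(\kappa,\delta)\,\delta^j\,I_{n+k_1-1-j}(\kappa).
\]

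\emph{Conclusion.} In case \textbf{III}, $\kappa$ is bounded, so the integrand of $H_{k_1,k_2}$ is bounded and the terms with $j\ge1$ contribute $O(\delta)$; since $I_{n+k_1-1}(\kappa)$ is bounded away from $0$ on $[1/C,C]$, this yields $H_{k_1,k_2}(R,t)=2\pi I_{n+k_1-1}(\kappa)(1+O(\delta))$, and substituting this together with the exponential identities above (absorbing $O(\kappa\delta^2)$ into $O(\delta)$) gives~\eqref{stimeIII}. In case \textbf{IV}, $\kappa\to0^+$ as well and $\delta/\kappa=\frac{1}{2\pi|t|}\to0$; since $I_\nu(\kappa)=\frac{\kappa^{|\nu|}}{2^{|\nu|}|\nu|!}(1+O(\kappa^2))$, the $j=1$ term of the series is $O(\delta/\kappa)=O(1/|t|)$ relative to the $j=0$ term and dominates all the others, whence $H_{k_1,k_2}(R,t)=2\pi I_{n+k_1-1}(\kappa)(1+O(1/|t|))$. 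Inserting $e^{-\kappa}I_{n+k_1-1}(\kappa)=\frac{\kappa^{n+k_1-1}}{2^{n+k_1-1}(n+k_1-1)!}(1+O(\kappa))$ and $\delta^{-(n+k_1-1)}\kappa^{n+k_1-1}=(2\pi|t|)^{n+k_1-1}$, and absorbing $O(\kappa\delta^2)$ into $O(\kappa)$, produces~\eqref{stimeIV}.

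\emph{Main obstacle.} The delicate point is the last step in case \textbf{IV}: the crude bound $O(\delta)$ of case \textbf{III} is not good enough once $I_{n+k_1-1}(\kappa)\to0$, and one genuinely needs the uniform control of the Fourier expansion of the amplitude of $H_{k_1,k_2}$ — equivalently, the comparison of $I_{n+k_1-1-j}(\kappa)$ with $I_{n+k_1-1}(\kappa)$ as $\kappa\to0$ — to see that the relative error improves to $O(1/|t|)$. The remaining steps (the contour shift, the negligibility of $e^{-3\pi|t|/2}$, and the rewriting of the exponent via $d(x,t)$) are essentially those of Theorem~\ref{stimeII}, and the rest is bookkeeping of constants and exponents.
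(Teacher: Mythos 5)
Your proof is correct and follows essentially the same route as the paper's: the contour shift past the pole at $\pi i$ via Lemmata~\ref{stime:lem:8},~\ref{stime:lem:3} and~\ref{stime:lem:11}, the exponential bookkeeping through~\eqref{esponente} and Lemma~\ref{stime:lem:6}, and a Bessel-function expansion of $H_{k_1,k_2}$. The only presentational difference is in that last step: you expand the holomorphic amplitude as a full convergent power series $\sum_j c_j(\kappa,\delta)\,\delta^j e^{ijs}$ and control the tail with uniform Cauchy estimates, whereas the paper's Lemma~\ref{stime:lem:13} uses a finite two-variable Taylor expansion of $\tilde\phi_{k_1,k_2}(R,\xi)$ to order $N$ (with $N=0$ for case \textbf{III} and $N=n+k_1-1$ for case \textbf{IV}), but both give the same leading term $2\pi I_{n+k_1-1}(\kappa)$ with error $O(\delta)$ in case \textbf{III} and $O(1/|t|)$ in case \textbf{IV}.
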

	\begin{lemma}\label{stime:lem:13}
		For every $N\in \N$
		\[
		H_{k_1,k_2}(R,t)=2\pi \sum_{\abs{\alpha}\leq N} I_{{n+k_1-1-\alpha_2}}(\kappa) \frac{\partial^\alpha\tilde \phi_{k_1,k_2}(0,0)\kappa^{\alpha_1}}{2^{\alpha_1}\alpha!}\delta^{\abs*{\alpha}}+O\left(\delta^{N+1}\right)
		\]
		for $\delta\to 0^+$,  uniformly as $\kappa$ runs through $[0, C]$. 
	\end{lemma}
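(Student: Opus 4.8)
The plan is to recognize that the exponential prefactor $e^{Rr(-\delta e^{is})}$ in the definition of $H_{k_1,k_2}$ is exactly what is needed to reconstitute $\tilde\phi_{k_1,k_2}$ from $\phi_{\delta,k_1,k_2}$, after which the lemma reduces to a joint Taylor expansion of $\tilde\phi_{k_1,k_2}$ at the origin followed by term-by-term integration against $e^{\kappa\cos s}$. No stationary-phase or Laplace machinery is needed here: the gain of powers of $\delta$ comes purely from the analyticity of $\tilde\phi_{k_1,k_2}$ at $(0,0)$.

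First I would record the elementary identity $\tilde\phi_{k_1,k_2}(R,\xi)=e^{Rr(-\xi)}\,\tilde\phi_{k_1,k_2}(0,\xi)$, valid for all $R\in\C$ and $\xi\notin\Z$ (and at $\xi=0$, since $r(0)=0$), which is immediate from the definitions. Combined with $\phi_{\delta,k_1,k_2}(s)=e^{-i(n+k_1-1)s}\tilde\phi_{k_1,k_2}(0,\delta e^{is})$ and $R=\kappa\delta/2$, this gives, for $0<\delta<1$ (so that $\delta e^{is}\notin\Z^*$ for every real $s$),
\[
H_{k_1,k_2}(R,t)=\int_{-\pi}^{\pi}e^{\kappa\cos s}\,e^{-i(n+k_1-1)s}\,\tilde\phi_{k_1,k_2}\!\big(R,\delta e^{is}\big)\,\dd s.
\]
The key analytic input is then that $\tilde\phi_{k_1,k_2}$ is holomorphic in $(R,\xi)$ on $\C\times\{|\xi|<1\}$: indeed $\xi\mapsto \pi\xi/\sin(\pi\xi)$ extends holomorphically and without zeros across $\xi=0$, $r$ is holomorphic near $0$, and the remaining factors are entire or polynomial, so the singularity of the defining formula at $\xi=0$ is removable. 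Hence, on a polydisc $\{|R|\le 1\}\times\{|\xi|\le\rho\}$, Taylor's formula yields, uniformly on that polydisc,
\[
\tilde\phi_{k_1,k_2}(R,\xi)=\sum_{|\alpha|\le N}\frac{\partial^{\alpha}\tilde\phi_{k_1,k_2}(0,0)}{\alpha!}\,R^{\alpha_1}\xi^{\alpha_2}+O\!\big((|R|+|\xi|)^{N+1}\big).
\]

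Next I would substitute $R=\kappa\delta/2$ and $\xi=\delta e^{is}$; since $\kappa\in[0,C]$ one has $|R|+|\xi|\le(1+C/2)\delta$, so for $\delta$ small the substitution is legitimate and the remainder becomes $O(\delta^{N+1})$ uniformly in $s\in[-\pi,\pi]$ and $\kappa\in[0,C]$. Plugging the expansion
\[
\tilde\phi_{k_1,k_2}\!\big(R,\delta e^{is}\big)=\sum_{|\alpha|\le N}\frac{\partial^{\alpha}\tilde\phi_{k_1,k_2}(0,0)\,\kappa^{\alpha_1}}{2^{\alpha_1}\alpha!}\,\delta^{|\alpha|}e^{i\alpha_2 s}+O(\delta^{N+1})
\]
into the integral, the error term contributes $\int_{-\pi}^{\pi}e^{\kappa\cos s}\,O(\delta^{N+1})\,\dd s=O(\delta^{N+1})$ because $e^{\kappa\cos s}\le e^{C}$, while the $\alpha$-th term of the finite sum produces, via the integral representation of the modified Bessel function recalled above (with the integer order $n+k_1-1-\alpha_2$, the identity persisting at $\kappa=0$ by continuity),
\[
\frac{\partial^{\alpha}\tilde\phi_{k_1,k_2}(0,0)\,\kappa^{\alpha_1}}{2^{\alpha_1}\alpha!}\,\delta^{|\alpha|}\int_{-\pi}^{\pi}e^{\kappa\cos s}e^{-i(n+k_1-1-\alpha_2)s}\,\dd s=2\pi\, I_{n+k_1-1-\alpha_2}(\kappa)\,\frac{\partial^{\alpha}\tilde\phi_{k_1,k_2}(0,0)\,\kappa^{\alpha_1}}{2^{\alpha_1}\alpha!}\,\delta^{|\alpha|}.
\]
Summing over $|\alpha|\le N$ gives exactly the asserted expansion.

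The only point requiring a little care is the uniformity of the Taylor remainder and of the error terms with respect to $\kappa$ and $s$; but this is mild, since $\kappa$ ranges over the compact interval $[0,C]$ and $s$ over $[-\pi,\pi]$, so all implicit constants depend only on $N,C,n,k_1,k_2$.
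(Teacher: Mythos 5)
Your proof is correct and follows essentially the same route as the paper's: substitute the definition of $\phi_{\delta,k_1,k_2}$ (and use $e^{Rr(-\delta e^{is})}$ and the definition of $\tilde\phi_{k_1,k_2}$) to rewrite $H_{k_1,k_2}$ as $\int_{-\pi}^{\pi}e^{\kappa\cos s}e^{-i(n+k_1-1)s}\tilde\phi_{k_1,k_2}(R,\delta e^{is})\,\dd s$, Taylor-expand $\tilde\phi_{k_1,k_2}$ about $(0,0)$, and recognize the modified Bessel functions. You simply spell out more carefully the holomorphy of $\tilde\phi_{k_1,k_2}$ and the uniformity of the remainder, which the paper leaves implicit.
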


	\begin{proof}
		By substituting \eqref{phideltak1k2} in \eqref{Hk1k2} and by Taylor's formula applied to $\tilde \phi_{k_1,k_2}$,
		\[
		\begin{split}
		H_{k_1,k_2}(R,t)&=\int_{-\pi}^\pi e^{\kappa \cos(s)}e^{-i(n+k_1-1)s}\tilde \phi_{k_1,k_2}(R,\delta e^{is})\,\dd s\\
		&=\sum_{\abs*{\alpha}\leq N} \frac{\partial^{\alpha}\tilde \phi_{k_1,k_2}(0,0)}{\alpha!}R^{\alpha_1}\delta^{\alpha_2} \int_{-\pi}^\pi e^{\kappa \cos(s)} e^{-i(n+k_1-1-\alpha_2) s} \,\dd s + \mathcal{R}_{N+1}(\delta,\kappa)
		\\&= 2\pi\sum_{\abs*{\alpha}\leq N} I_{{n+k_1-1-\alpha_2}}(\kappa) \frac{\partial^\alpha\tilde \phi_{k_1,k_2}(0,0)\kappa^{\alpha_1}}{2^{\alpha_1}\alpha!}\delta^{\abs*{\alpha}}+ \mathcal{R}_{N+1}(\delta,\kappa),
		\end{split}
		\]
		where the last equality holds since $R=\frac{\delta\kappa}{2}$. Moreover, $\mathcal{R}_{N+1}(\delta,\kappa)$ is easily seen to be $O\left(\delta^{N+1}\right)$ for $\delta\to 0^+$ uniformly as $\kappa$ runs through $[0,C]$. This completes the proof.
	 \end{proof}

	\begin{proof}[Proof of Theorem~\ref{stimeIIIeIV}]
		Lemmata~\ref{stime:lem:3} and~\ref{stime:lem:11} imply that
		\[
		p_{1,k_1,k_2}(x,t)\!=\!\frac{(-1)^{k_2}2 \pi^{k_2-n}}{4^{n+1}\delta^{n+k_1-1}} e^{-R-\pi\abs*{t}} H_{k_1,k_2}(R,t)+ O\left(e^{-\frac{3\pi \abs*{t}}{2}}\right).
		\]
		Moreover, recall that $\delta \abs*{t}= \frac{\kappa}{2 \pi}$ and $R=\frac{\kappa \delta}{2}$; therefore, for every $N\in\N$,  
		\begin{equation}\label{restoIII}
			e^{-\frac{3\pi \abs*{t}}{2}}= o\left(\delta^{N+2-n-k_1}e^{-R-\pi\abs*{t}}\right)
		\end{equation}
		as $\delta\to 0^+$, uniformly as $\kappa$ runs through $[1/C,C]$. By~\eqref{esponente} and Lemma~\ref{stime:lem:6}, the first assertion follows from Lemma~\ref{stime:lem:13} for $N=0$.
		
		As for \eqref{stimeIV}, observe first that $\kappa \to 0^+$ and $\abs{t} \to +\infty$ is equivalent to saying $\delta,\kappa \to 0^+$ and $\delta =o(\kappa)$. Then Lemma~\ref{stime:lem:13} with $N=n+k_1-1$  and an easy development of the Bessel function in a neighbourhood of $0$ imply that
		\begin{multline*}
			p_{1,k_1,k_2}(x,t)=  \frac{\pi^{k_1+k_2}(-1)^{k_2}}{4^n (\pi\delta)^{n+k_1-1}} e^{-\pi \abs{t}-R} \Bigg[  \kappa^{n+k_1-1} \frac{I_{n+k_1-1}^{(n+k_1-1)}(0)}{(n+k_1-1)!}+O(\kappa^{n+k_1})\\
			+\sum_{1\leq \abs{\alpha}\leq n+k_1-1} O\left(I_{n+k_1-1-\alpha_2}(\kappa) \kappa^{\alpha_1}\delta^{\abs{\alpha}}\right)+O(\delta^{n+k_1})\Bigg] + O\left(e^{-\frac{3\pi \abs{t}}{2}}\right).
		\end{multline*}
		Since $\delta=o(\kappa)$, one has $\delta^{\alpha_2+\alpha_1-1}=O(\kappa^{\alpha_2+\alpha_1-1})$  for every $\alpha\neq 0$. Therefore,
		\[
		\begin{split}
		\sum_{1\leq \abs{\alpha}\leq n+k_1-1} O\left(I_{n+k_1-1-\alpha_2}(\kappa) \kappa^{\alpha_1}\delta^{\abs{\alpha}}\right)		&=\sum_{1\leq \abs{\alpha}\leq n+k_1-1} O\left(\kappa^{n+k_1-2+2\alpha_1}\delta\right)\\
		&=O\left(\kappa^{n+k_1-2}\delta\right).
		\end{split}
		\]
		Since $\frac{\kappa}{2\pi \delta}= \abs{t}$ and $I_{n+k_1-1}^{(n+k_1-1)}(0)= \frac{1}{2^{n+k_1-1}}$, we get
		\[
		p_{1,k_1,k_2}(x,t)=  \frac{\pi^{k_1+k_2} (-1)^{k_2}}{4^n(n+k_1-1)!} e^{-\pi \abs{t}-R} \abs{t}^{n+k_1-1}\left[  1+ O\left(\frac{1}{\abs{t}}+\kappa+\delta\right)\right] + O\left(e^{-\frac{3\pi \abs{t}}{2}}\right).
		\]
		{Finally,  $\delta=o\left(\frac{1}{\abs{t}}\right)$ since $\delta \abs{t}= \frac{\kappa}{2\pi}$; moreover}
		\[
		e^{-\frac{3\pi \abs{t}}{2}}=o\left( e^{-\pi \abs{t}-R} \abs{t}^{n+k_1-2}\right) 
		\]
		since $R\to 0^+$ and $\abs{t}\to +\infty$. The assertion follows.
	 \end{proof}
	
	The estimates in cases {\bf II}, {\bf III}, and {\bf IV} can be put together. This is done in the following corollary, which will turn out to be fundamental later on. Define first, for $\zeta\in \C$ and $\nu\in \Z$,
	\[
		\tilde I_\nu(\zeta)\coloneqq \sum_{k\geq 0}  \frac{\zeta^{2 k}}{2^{2 k+\nu}k! \Gamma(k+\nu+1)}.
	\]
	From now on we shall use the following abbreviation. We keep the notation of Lemma~\ref{stime:lem:6}.
	
	\begin{definition}
		For $\delta\in B_\C(0,\delta_1)$, define $\rho(\delta)\coloneqq q_\delta(\sigma_\delta)$.
	\end{definition}
	
	By Lemma~\ref{stime:lem:6}, $\rho$ is a holomorphic function such that $\rho(0)=1$ and $\rho'(0)=0$, so that $\rho(\delta)=1+O(\delta^2)$ as $\delta\to 0$.
	\begin{corollary}\label{cor:1}
		When $(x,t)\to \infty$ and $\delta\to 0^+$
		\begin{equation*}
			p_{1,k_1,k_2}(x,t)= \frac{(-1)^{k_2}\pi^{k_1+k_2} }{2^{n -k_1+1} }\abs*{t}^{n+k_1-1}  e^{-\frac{1}{4}d(x,t)^2}e^{-\kappa \rho(\delta)}\tilde I_{n+k_1-1}\left( \kappa \rho(\delta)\right) \left[1+ g(\abs{x},\abs{t})  \right],
		\end{equation*}
		where
		\begin{equation}\label{remainders}
			g(\abs{x},\abs{t})= \begin{cases}
				O\left(\delta+\frac{1}{\kappa}\right) &\text{if $\delta\to 0^+$ and $\kappa\to +\infty$,}\\
				O(\delta) &\text{if $\delta\to 0^+$ and {$\kappa\in [1/C,C]$},}\\
				O\left(\frac{1}{\abs{t}}+\kappa\right) &\text{if $\delta\to 0^+$ and $\kappa\to 0^+$}
			\end{cases}
		\end{equation}
		for every $C>1$.
	\end{corollary}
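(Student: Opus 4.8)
The plan is to show that the single formula in the statement specializes, in each of the three asymptotic regimes, to the estimate already proved — Theorem~\ref{stimeII} in case \textbf{II}, and the two displays \eqref{stimeIII}, \eqref{stimeIV} of Theorem~\ref{stimeIIIeIV} in cases \textbf{III} and \textbf{IV} — up to errors absorbed into the corresponding remainder in~\eqref{remainders}. Writing $\nu\coloneqq n+k_1-1\geq 0$, the only facts I would need about $\tilde I_\nu$ are: the identity $I_\nu(\zeta)=\zeta^\nu\tilde I_\nu(\zeta)$ read off from the two power series, together with $\tilde I_\nu(0)=\frac{1}{2^\nu\,\nu!}$ and the fact that $\tilde I_\nu$ is entire and strictly positive on $\R$; and the classical large-argument asymptotics of the modified Bessel function (see, e.g.,~\cite{Erdelyi2}), which in this notation reads $e^{-z}\tilde I_\nu(z)=\frac{1}{z^\nu\sqrt{2\pi z}}\bigl(1+O(1/z)\bigr)$ as $z\to+\infty$. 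From Lemma~\ref{stime:lem:6} I also use that $\rho$ is holomorphic near $0$ with $\rho(0)=1$, $\rho'(0)=0$, whence $\rho(\delta)=1+O(\delta^2)$; and throughout the identity $\abs{t}=\kappa/(2\pi\delta)$, which gives $\abs{t}^\nu\kappa^{-\nu}=(2\pi\delta)^{-\nu}$.

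I would begin with cases \textbf{III} and \textbf{IV}, where $\delta\to 0^+$ and $\kappa$ is bounded, so $\kappa\rho(\delta)=\kappa+O(\delta^2)$. In case \textbf{III}, $\kappa$ stays in a fixed compact subset of $(0,\infty)$, hence $e^{-\kappa}\tilde I_\nu(\kappa)$ is bounded away from $0$; since $z\mapsto e^{-z}\tilde I_\nu(z)$ is smooth, one gets $e^{-\kappa\rho(\delta)}\tilde I_\nu(\kappa\rho(\delta))=e^{-\kappa}\tilde I_\nu(\kappa)\,(1+O(\delta^2))$. Rewriting $e^{-\kappa}\tilde I_\nu(\kappa)=\kappa^{-\nu}e^{-\kappa}I_\nu(\kappa)$, using $\abs{t}^\nu\kappa^{-\nu}=(2\pi\delta)^{-\nu}$, and collecting powers of $2$ via $2^{-(n-k_1+1)}\cdot 2^{-\nu}=4^{-n}$, the right-hand side of the corollary (without the bracket $[1+g]$) becomes that of \eqref{stimeIII} times $(1+O(\delta^2))$; the extra factor is absorbed into the $O(\delta)$ there. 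For case \textbf{IV} one has instead $\kappa\rho(\delta)\to 0$, so $e^{-\kappa\rho(\delta)}=1+O(\kappa)$ and $\tilde I_\nu(\kappa\rho(\delta))=\frac{1}{2^\nu\,\nu!}+O(\kappa^2)$, i.e. $e^{-\kappa\rho(\delta)}\tilde I_\nu(\kappa\rho(\delta))=\frac{1}{2^\nu\,\nu!}(1+O(\kappa))$; the same bookkeeping ($2^{-(n-k_1+1)}\cdot 2^{-\nu}\,\nu!^{-1}=4^{-n}\,(n+k_1-1)!^{-1}$) produces the right-hand side of \eqref{stimeIV} times $(1+O(\kappa))$, again absorbed.

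Finally, in case \textbf{II} ($\delta\to 0^+$, $\kappa\to+\infty$) I would apply the large-argument asymptotics with $z=\kappa\rho(\delta)\to+\infty$: since $O(1/(\kappa\rho(\delta)))=O(1/\kappa)$ and $(\kappa\rho(\delta))^{-\nu-1/2}=\kappa^{-\nu-1/2}(1+O(\delta^2))$, one obtains
\[
e^{-\kappa\rho(\delta)}\tilde I_\nu(\kappa\rho(\delta))=\frac{1}{\kappa^\nu\sqrt{2\pi\kappa}}\Bigl(1+O\bigl(\tfrac1\kappa+\delta\bigr)\Bigr),
\]
and substituting this together with $\abs{t}^\nu\kappa^{-\nu}=(2\pi\delta)^{-\nu}$ (and the same power-of-$2$ bookkeeping) recovers the right-hand side of Theorem~\ref{stimeII} times $(1+O(\tfrac1\kappa+\delta))$. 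Since cases \textbf{II}, \textbf{III}, \textbf{IV} together describe the behaviour of $p_{1,k_1,k_2}$ as $(x,t)\to\infty$ with $\delta\to0^+$, this proves the corollary. The argument is essentially bookkeeping; the only points requiring a word of care are the uniformity (in $\delta$) of the Bessel asymptotics in case \textbf{II}, which is standard, and, in case \textbf{III}, the lower bound $e^{-\kappa}\tilde I_\nu(\kappa)\gtrsim 1$ on the compact range of $\kappa$, which is what keeps the multiplicative error of size $O(\delta)$.
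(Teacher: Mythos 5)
Your proof follows the same route as the paper's: start from Theorems~\ref{stimeII} and~\ref{stimeIIIeIV}, rewrite the right-hand side in terms of $\tilde I_{n+k_1-1}(\kappa\rho(\delta))$ using the identity $I_\nu(\zeta)=\zeta^\nu\tilde I_\nu(\zeta)$, the large-argument Bessel asymptotics in case \textbf{II}, the smoothness (and positivity) of $e^{-z}\tilde I_\nu(z)$ together with $\rho(\delta)=1+O(\delta^2)$ in case \textbf{III}, and the value $\tilde I_\nu(0)=\frac{1}{2^\nu\nu!}$ in case \textbf{IV}, and then collect powers of $2$ via $|t|=\kappa/(2\pi\delta)$. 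The only cosmetic differences are that you work directly with $\tilde I_\nu$ in case \textbf{III} where the paper first passes through $I_\nu$, and that you explicitly flag the lower bound $e^{-\kappa}\tilde I_\nu(\kappa)\gtrsim 1$ on compact $\kappa$-ranges — a point the paper uses implicitly; the argument is correct.
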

	
	\begin{proof}
		{\bf1.} Assume first that $\kappa\to +\infty $. Since $ I_\nu(s)= \frac{e^s}{\sqrt{2\pi s}}\left[ 1+O\left( \frac{1}{s}\right) \right]$ for $s\to +\infty$, $\nu\in\Z$ (cf.~\cite[7.13.1 (5)]{Erdelyi2}),
		\begin{equation}	\label{modibess}
			{\tilde I_\nu(s)=\frac{e^s}{s^\nu\sqrt{2\pi s}}\left[ 1+O\left( \frac{1}{s}\right) \right] \quad \mbox{for $s\to +\infty$.}}
		\end{equation}
		Therefore, Theorem~\ref{stimeII} implies that
{		\[
		\begin{split}
		p_{1,k_1,k_2}(x,t)&= \frac{(-1)^{k_2}\pi^{k_1+k_2} }{4^n (\pi\delta)^{n+k_1-1}\sqrt{2\pi \kappa}}e^{-\frac{1}{4}d(x,t)^2 } \left[1+O\left(\frac{1}{\kappa} +\delta \right)\right] \\
		&=\frac{(-1)^{k_2}\pi^{k_1+k_2} \tilde I_{n+k_1-1}\left( \kappa \rho(\delta)\right) }{2^{n -k_1+1} }\abs*{t}^{n+k_1-1}  e^{-\frac{1}{4}d(x,t)^2 } e^{-\kappa \rho(\delta)} \\
		&\qquad\qquad\qquad\times\left[1+O\left(\frac{1}{\kappa\rho(\delta)}  \right)\right] \left[1+O\left(\frac{1}{\kappa} +\delta \right)\right]\\
		&=\frac{(-1)^{k_2}\pi^{k_1+k_2} \tilde I_{n+k_1-1}\left( \kappa \rho(\delta)\right) }{2^{n -k_1+1} }\abs*{t}^{n+k_1-1} e^{-\frac{1}{4}d(x,t)^2 } e^{-\kappa \rho(\delta)} \left[1+O\left(\frac{1}{\kappa} +\delta \right)\right],
		\end{split}
		\]}
		since $\rho(\delta)=1+O(\delta^2)$ and $\frac{2 \abs{t}}{\kappa}=\frac{1}{\pi\delta}$.
		
		{\bf2.} Assume now that ${\kappa\in [1/C,C]}$ for some $C>1$.  Then, by Theorem~\ref{stimeIIIeIV},
\[
		\begin{split}
		p_{1,k_1,k_2}(x,t)&=  \frac{(-1)^{k_2}\pi^{k_1+k_2}}{4^n (\pi\delta)^{n+k_1-1}} e^{-\frac{1}{4}d(x,t)^2 } e^{-\kappa}  I_{n+k_1-1}(\kappa) \left[1+O\left(\delta \right)\right] \\
		&=\frac{(-1)^{k_2}\pi^{k_1+k_2}}{2^{n -k_1+1}}\abs{t}^{n+k_1-1} e^{-\frac{1}{4}d(x,t)^2 } e^{-\kappa \rho(\delta) } \tilde I_{n+k_1-1}(\kappa \rho(\delta)) \left[1+O\left(\delta^2 \right)\right] \left[1+O\left(\delta  \right)\right]\\
		&=\frac{(-1)^{k_2}\pi^{k_1+k_2} }{2^{n -k_1+1}}\abs{t}^{n+k_1-1} e^{-\frac{1}{4}d(x,t)^2 } e^{-\kappa \rho(\delta)}  \tilde I_{n+k_1-1}(\kappa\rho(\delta)) \left[1+O\left(\delta\right)\right],
		\end{split}
		\]
		where the second equality holds since $I_{n+k_1-1}(\kappa \rho(\delta))-I_{n+k_1-1}(\kappa)=O(\kappa (\rho(\delta)-1))=O(\delta^2)$ uniformly as $\kappa$ runs through $ [1/C,C]$ by Taylor's formula.
		
		{\bf3.} Finally, if $\kappa\to 0^+$ then
		\[\tilde I_{n+k_1-1}(\kappa)=\tilde I_{n+k_1-1}(0)+O(\kappa)= \frac{1}{2^{n+k_1-1}(n+k_1-1)!} + O(\kappa)\] by the definition of $\tilde I_{n+k_1-1}$. Combining this estimate with  Theorem~\ref{stimeIIIeIV} yields the assertion.
	 \end{proof}

	\section{H-type Groups}\label{sec:mgeneral}
	In this section we deal with the general case $m\geq1$. In particular, we prove a refined version of Theorem \ref{HeisHstimeI}, and extend Theorems~\ref{stimeII} and \ref{stimeIIIeIV}: this is done through Theorems \ref{HstimeI}, \ref{HstimeII} and \ref{HstimeIIIeIV} respectively. {Theorem~\ref{HstimeI} treats the case \textbf{I} and is still inspired by~\cite[Theorem 2 of § 3]{Gaveau}.  The asymptotic estimates in the other three cases are first obtained in the case $m$ odd, ``reducing'' to the case $m=1$; the case $m$ even is then achieved through a descent method.}
	
	The first step in order to apply the method of stationary phase is to extend the integrand to a meromorphic function on $\C^m$. If $m>1$, such extension is no longer automatic as when $m=1$. A natural way consists in taking advantage of the parity of the functions that appear, as in~\cite{Eldredge}. Indeed, any continuous  branch of $\lambda \mapsto\sqrt{\lambda^2}$ is a holomorphic function which coincides with $\lambda\mapsto\pm\abs{\lambda}$ on $\R^m$; therefore, whenever $g$ is an even holomorphic function defined on a symmetric open subset of $\C$, the function $\lambda\mapsto g(\sqrt{\lambda^2})$ is well-defined, holomorphic, and coincides with $\lambda \mapsto g(\abs{\lambda})$ on {$\R^m$}. Hence, we are led to the following definition, which is the analogue of Definition \ref{hk1k2}. We shall use the same notation as before, without stressing the (new) dependence on $m$.

	\begin{definition} 
		Define
		\begin{equation*}\label{Hhk1k2}
			\begin{split}
				h_{k_1,k_2}(R,t)= \int_{\mathbb{R}^m} e^{i R\phi_\omega(\lambda)}a_{k_1,k_2}(\lambda)\,\dd \lambda \end{split}
		\end{equation*}
		where
		\begin{equation}\label{Hak1k2}
			\begin{split}
				a_{k_1,k_2}(\lambda)&=\begin{cases}
					(-1)^{k_1}i^{k_2}\frac{\sqrt{\lambda^2}^{n+k_1  }\cosh(\sqrt{\lambda^2})^{k_1}}{\sinh(\sqrt{\lambda^2})^{n+k_1}}(\lambda, u_1)^{k_2} & \text{if $\sqrt{\lambda^2}\not\in i \pi \Z^*$,}\\
					(-1)^{k_1}i^{k_2}\delta_{k_2,0}	&\text{if $\lambda  =0$,}
				\end{cases}  	\\
				\phi_\omega  (\lambda)&= \begin{cases}
					\omega\, (\lambda, u_1) +i\sqrt{\lambda^2}  \coth(\sqrt{\lambda^2} ) & \text{if $\sqrt{\lambda^2}\not\in i \pi \Z^*$,}\\
					i	&\text{if $\lambda  =0$}.
				\end{cases}
			\end{split} 
		\end{equation}
Define also
\begin{equation}\label{ak1k2omega}
a_{k_1,k_2,\omega}(\lambda)  \coloneqq a_{k_1,k_2}(\lambda+iy_\omega u_1).
\end{equation}
	\end{definition}
	
	Observe again that
	\[
	p_{1,k_1,k_2}(x,t)= \frac{1}{(4\pi)^n(2\pi)^{m}} h_{k_1,k_2}\left(R,t\right)
	\]
	for all $(x,t)\in \R^{2n}\times \R^m$, and that $y_\omega =\theta^{-1}(\omega) \in [0,\pi)$, since $\omega\geq 0$.
	
	\subsection{\textbf{I.} Estimates for $(x,t)\to \infty$ while $ 4\abs{t}/\abs{x}^2 \leq C$.}\label{SecStimeIm>1}

	 The main result of this section is Theorem~\ref{HstimeI} below. As already said, the main ingredient of its proof is the method of stationary phase (cf.~Proposition~\ref{propHorm}), which is already employed in~\cite[Theorem 2 of § 3]{Gaveau} to treat the case $n=m=1$ and $k_1=k_2=0$.
	 
The novelty of considering all the derivatives of the heat kernel $p_1$ (in other words, all the cases $k_1\geq 0$ and $k_2\geq 0$) {introduces} additional complexity to the developments, since {the choice $k=0$ in~\eqref{eqfasestaz}} may not give the  sharp asymptotic behaviour of $p_{1,k_1,k_2}$ at infinity, while $\omega$ remains bounded. In particular, this happens in the cases $\omega \to 0$ and $k_2>0$, or $\omega \to \frac{\pi}{2}$ and $k_1>0$. If {$\omega$ remains bounded and away from $0$ and $\frac{\pi}{2}$}, the first term is instead enough.
	\begin{theorem}\label{HstimeI}
Fix $\varepsilon, C>0$. If $(x,t)\to \infty$ while $0\leq \omega \leq C$, then
\[
p_{1,k_1,k_2}(x,t)= \frac{1}{|x|^m}e^{-\frac{1}{4}d(x,t)^2}\Psi(\omega) \Upsilon(x,t)
\]
where
\begin{equation}\label{PsiI}
\Psi(\omega)=\begin{cases}
	\frac{1}{4^n \pi^{n+m}}\sqrt{\frac{(2 \pi)^m y_\omega^{m-1}\sin(y_\omega)^3}{2\omega^{m-1}(\sin(y_\omega)-y_\omega\cos(y_\omega))}}, 	& \text{if $\omega\neq 0$,}\\
		\frac{(3\pi)^{m/2}}{4^n \pi^{n+m}}, & \text{if $\omega= 0$,}\end{cases}
\end{equation}
and
\begin{itemize}
\item[1.] if $\varepsilon\leq \omega \leq \frac{\pi}{2}-\varepsilon$ or $\frac{\pi}{2}+\varepsilon \leq \omega \leq C$,
\begin{equation}\label{HstimeI_1}
\Upsilon(x,t)= (-1)^{k_1+k_2}  \frac{ y_\omega^{n+k_1+k_2}\cos(y_\omega)^{k_1}}{\sin (y_\omega)^{n+k_1}} +O\left(\frac{1}{|x|^2}\right);
\end{equation}
\item[2.] if $\omega \to 0$ and $k_2$ is even,
\begin{equation}\label{HstimeI_2}
\Upsilon(x,t)= \sum_{j=0}^{k_2/2}c_{k_1,k_2,j} \frac{\omega^{k_2-2j}}{|x|^{2j}} + O\left(\sum_{j=0}^{k_2/2} \frac{\omega^{k_2-2j+1}}{|x|^{2j}}+\frac{1}{|x|^{k_2+2}}\right);
\end{equation}
\item[3.] if $\omega \to 0$, $k_2$ is odd and $|t|\to \infty$,
\begin{equation}\label{HstimeI_3}
\Upsilon(x,t)= \sum_{j=0}^{(k_2-1)/2} c_{k_1,k_2,j}\frac{ \omega^{k_2-2j}}{|x|^{2j}} + O\left(\sum_{j=0}^{(k_2+1)/2} \frac{ \omega^{k_2-2j+1}}{|x|^{2j}}\right);
\end{equation}
\item[4.]  if $\omega \to 0$, $k_2$ is odd and $0\leq |t|\leq C$
\begin{equation}\label{HstimeI_4}
\Upsilon(x,t)=c_{k_1,k_2+1, (k_2+1)/2}\frac{|t|}{|x|^{k_2+1}} +O\left(\frac{|t|}{|x|^{k_{2}+3}}\right);
\end{equation}
\item[5.] if $\omega \to \frac{\pi}{2}$ and $k_1$ is even,
\begin{equation}\label{HstimeI_5}
\Upsilon(x,t) = \sum_{j=0}^{k_1/2} b_{k_1,k_2,j} \frac{\left(\omega -\frac{\pi}{2}\right)^{k_1-2j}}{|x|^{2j}} + O\left( \sum_{j=0}^{k_1/2}\frac{\left(\omega -\frac{\pi}{2}\right)^{k_1-2j+1}}{|x|^{2j}} + \frac{1}{|x|^{k_1+2}}\right);
\end{equation}

\item[6.] if $\omega \to \frac{\pi}{2}$ and $k_1$ is odd,
\begin{multline}\label{HstimeI_6}
\Upsilon(x,t) = \sum_{j=0}^{(k_1-1)/2} b_{k_1,k_2,j} \frac{\left(\omega -\frac{\pi}{2}\right)^{k_1-2j}}{|x|^{2j}} + \frac{b_{k_1,k_2,(k_1+1)/2}}{|x|^{k_1+1}}\\+ O\left( \sum_{j=0}^{(k_1-1)/2}\frac{\left(\omega -\frac{\pi}{2}\right)^{k_1-2j+1}}{|x|^{2j}}+\frac{\omega-\frac{\pi}{2}}{\abs{x}^{k_1+1}}+\frac{1}{\abs{x}^{k_1+3}} \right).
\end{multline}
\end{itemize}
The coefficients $c_{k_1,k_2,j}$ and $b_{k_1,k_2,j}$ are explicitly given by~\eqref{ck1k2},~\eqref{bk1k2j} and~\eqref{bk1k2odd}.
		\end{theorem}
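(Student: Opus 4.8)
The plan is to mimic, in $\R^m$, the Heisenberg argument of Theorem~\ref{HeisHstimeI}, and then extract the degenerate regimes from the full stationary-phase expansion.

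\emph{Contour shift.} For $y_\omega=\theta^{-1}(\omega)\in[0,\pi)$ I would shift the contour in $h_{k_1,k_2}(R,t)=\int_{\R^m}e^{iR\phi_\omega(\lambda)}a_{k_1,k_2}(\lambda)\,\dd\lambda$ from $\R^m$ to $\R^m+iy_\omega u_1$, via an $m$-dimensional version of Lemma~\ref{stime:lem:8}: no poles of the integrand lie on the slab $\{\mu+iyu_1:\mu\in\R^m,\ 0\le y\le y_\omega\}$, since a pole would force $\sqrt{\lambda^2}\in i\pi\Z^*$, hence $\mu_1=0$ and $\abs{\mu}^2=y^2-\pi^2k^2<0$ (as $y<\pi$), and the lateral contributions at infinity are negligible by the decay of $a_{k_1,k_2}$ afforded by an $m$-dimensional analogue of Lemma~\ref{stime:lem:5}. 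Since $\omega=\theta(y_\omega)$ gives $\phi_\omega(iy_\omega u_1)=i\,y_\omega^2/\sin^2(y_\omega)$, so that $iR\phi_\omega(iy_\omega u_1)=-\tfrac14 d(x,t)^2$, with $\psi_\omega:=\phi_\omega(\,\cdot\,+iy_\omega u_1)-\phi_\omega(iy_\omega u_1)$ one obtains
\[
h_{k_1,k_2}(R,t)=e^{-\frac{1}{4} d(x,t)^2}\int_{\R^m}e^{iR\psi_\omega(\lambda)}a_{k_1,k_2,\omega}(\lambda)\,\dd\lambda .
\]

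\emph{Stationary phase.} I would apply Theorem~\ref{stationaryphase} to $\mathscr{F}=\{\psi_\omega:\omega\in[0,C]\}$ and $\mathscr{G}=\{a_{k_1,k_2,\omega}:\omega\in[0,C]\}$, bounded in $\Ec(\R^m)$ because $y_\omega$ ranges over the compact set $[0,\theta^{-1}(C)]\subseteq[0,\pi)$. Hypothesis~2 holds by construction and Lemma~\ref{stime:lem:7}: with $f(z)=iz\coth z$ (so $f'(\lambda)=\tilde\theta(i\lambda)$, whence $f'(iy_\omega)=-\omega$), the Hessian $\psi_\omega''(0)=\phi_\omega''(iy_\omega u_1)$ is block diagonal, with entry $f''(iy_\omega)=i\theta'(y_\omega)$ along $u_1$ and $m-1$ entries $f'(iy_\omega)/(iy_\omega)=i\omega/y_\omega$ (read as $i\theta'(0)=2i/3$ at $\omega=0$) transverse to it, so $\det\psi_\omega''(0)=i^m\theta'(y_\omega)(\omega/y_\omega)^{m-1}\ne0$; the identity $\theta'(y)=2(\sin y-y\cos y)/\sin^3 y$ turns the prefactor $\frac{1}{(4\pi)^n(2\pi)^m}\sqrt{(2\pi i)^m/(R^m\det\psi_\omega''(0))}$ into $\Psi(\omega)/\abs{x}^m$, with $\Psi$ exactly~\eqref{PsiI}. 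Hypotheses~1 and~3 follow as in the Heisenberg case: near $0$ from the nondegenerate Hessian, analyticity of $\psi_\omega$ in $(\lambda,\omega)$ and Taylor's formula, uniformly in $\omega$; globally from the analogue of Lemma~\ref{stime:lem:5}, which reduces $\Re[\sqrt{(\mu+iy_\omega u_1)^2}\coth(\cdot)]>y_\omega\cot(y_\omega)$ to the one-variable inequality after distinguishing $\abs{\mu}$ small or large compared with $y_\omega$; Hypothesis~4 holds since $\abs{a_{k_1,k_2,\omega}}$ grows at most polynomially (here $n\ge1$ is used). Theorem~\ref{stationaryphase} then gives, for every $k\in\N$,
\[
p_{1,k_1,k_2}(x,t)=\frac{1}{\abs{x}^m}e^{-\frac{1}{4} d(x,t)^2}\Psi(\omega)\,\Upsilon(x,t),\qquad
\Upsilon(x,t)=\sum_{j=0}^{k}\frac{L_{j,\psi_\omega}a_{k_1,k_2,\omega}}{R^j}+O\!\left(\frac{1}{R^{k+1}}\right),
\]
uniformly in $\omega\in[0,C]$, with $L_{0,\psi_\omega}a_{k_1,k_2,\omega}=a_{k_1,k_2}(iy_\omega u_1)=(-1)^{k_1+k_2}y_\omega^{n+k_1+k_2}\cos(y_\omega)^{k_1}/\sin(y_\omega)^{n+k_1}$; on $[\varepsilon,\tfrac\pi2-\varepsilon]\cup[\tfrac\pi2+\varepsilon,C]$ the number $y_\omega$ stays away from $0$ and $\tfrac\pi2$, so the case $k=0$ already yields~\eqref{HstimeI_1}.

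\emph{Degenerate regimes, and the main obstacle.} For~\eqref{HstimeI_2}--\eqref{HstimeI_6} I would take $k$ large and expand each coefficient $L_j:=L_{j,\psi_\omega}a_{k_1,k_2,\omega}$, a polynomial in the Taylor data of $\psi_\omega$, $a_{k_1,k_2,\omega}$ at $0$ and in $\psi_\omega''(0)^{-1}$, hence analytic in $\omega$ near $0$ (where $y_\omega=\tfrac32\omega+O(\omega^3)$) and in $\omega-\tfrac\pi2$ near $\tfrac\pi2$ (where $y_\omega-\tfrac\pi2=\tfrac12(\omega-\tfrac\pi2)+O((\omega-\tfrac\pi2)^2)$, since $\theta'(\tfrac\pi2)=2$). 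Near $\omega=0$, the factor $(\lambda_1+iy_\omega)^{k_2}$ present in $a_{k_1,k_2,\omega}$ forces $L_j=c_{k_1,k_2,j}\omega^{k_2-2j}+O(\omega^{k_2-2j+1})$ for $2j\le k_2$; moreover, when $k_2$ is odd, $a_{k_1,k_2,0}$ is odd and $\psi_0$ even, so the even order of the operators $(\psi_0''(0)^{-1}\partial,\partial)^{\mu+j}$ kills the $\omega$-independent term of every $L_j$, giving $L_j=O(\omega^{\max(1,\,k_2-2j)})$. Substituting $R=\abs{x}^2/4$, collecting the surviving powers and pushing the rest into the displayed remainders yields~\eqref{HstimeI_2} (with $k\ge k_2/2+1$) and~\eqref{HstimeI_3}; in the subcase $\abs{t}=R\omega\le C$ one replaces $\omega$ by $4\abs{t}/\abs{x}^2$ and keeps only the single dominant power of $\abs{t}$, obtaining~\eqref{HstimeI_4}. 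The analysis near $\omega=\tfrac\pi2$ is parallel, the degenerate factor being now $\cos(y_\omega)^{k_1}$, whose leading part at $\lambda=0$ is $\propto\lambda_1^{k_1}$ because $\cosh\sqrt{(\lambda+i\frac{\pi}{2} u_1)^2}\sim i\lambda_1$; one gets $L_j=b_{k_1,k_2,j}(\omega-\tfrac\pi2)^{k_1-2j}+O((\omega-\tfrac\pi2)^{k_1-2j+1})$ for $2j\le k_1$, but now $\psi_{\pi/2}$ is \emph{not} even, so the $\omega$-independent term of $L_{(k_1+1)/2}$ need not vanish when $k_1$ is odd --- which is precisely what produces the extra displayed term $b_{k_1,k_2,(k_1+1)/2}/\abs{x}^{k_1+1}$ in~\eqref{HstimeI_6}, with no analogue in~\eqref{HstimeI_3}; the bookkeeping then gives~\eqref{HstimeI_5} and~\eqref{HstimeI_6}, and the constants are read off the relevant Taylor coefficients, yielding~\eqref{ck1k2}, \eqref{bk1k2j} and~\eqref{bk1k2odd}. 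I expect the hard part to be exactly this last step: pinning down the vanishing orders of $L_{j,\psi_\omega}a_{k_1,k_2,\omega}$ in $\omega$ and in $\omega-\tfrac\pi2$ (including the parity dichotomy between $0$ and $\tfrac\pi2$), and checking in each of the three sub-regimes ($\abs{t}\to\infty$, $\abs{t}\le C$, $\omega$ generic) that every term not displayed is absorbed by the stated remainder. Verifying the global sign of $\Im\psi_\omega$ in Hypothesis~1 of Theorem~\ref{stationaryphase} is a secondary, purely computational point.
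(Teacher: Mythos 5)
Your strategy for the contour shift, the stationary‐phase application, the computation of $\Psi(\omega)$ from $\det\psi''_\omega(0)$, and the expansions of $L_{j,\psi_\omega}a_{k_1,k_2,\omega}$ near $\omega=0$ and $\omega=\tfrac{\pi}{2}$ are exactly what the paper does (via Proposition~\ref{propHorm} and Lemmata~\ref{lemmaHomega0}, \ref{lemmaHomegapi2}); the parity dichotomy you flag --- $\psi_0$ even, $\psi_{\pi/2}$ not --- is also the correct mechanism behind the asymmetry between~\eqref{HstimeI_3} and~\eqref{HstimeI_6}. However, your handling of case 4 has a genuine gap.

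You propose to obtain~\eqref{HstimeI_4} by substituting $\omega = 4\abs{t}/\abs{x}^2$ into the stationary-phase expansion and ``keeping the single dominant power of $\abs{t}$''. The problem is the stationary-phase \emph{remainder}. H\"ormander's theorem, as packaged in Theorem~\ref{stationaryphase}, gives a remainder that is $O(1/\abs{x}^{2k+2})$ \emph{uniformly} in $\omega\in[0,C]$, but with no vanishing in $\omega$ as $\omega\to 0$. The claimed remainder in~\eqref{HstimeI_4} is $O(\abs{t}/\abs{x}^{k_2+3})$, which carries an explicit factor of $\abs{t}$; since case~4 allows $\abs{t}$ to tend to $0$ arbitrarily fast while $\abs{x}\to\infty$, no fixed power $1/\abs{x}^{2k+2}$ can be dominated by $\abs{t}/\abs{x}^{k_2+3}$. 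Concretely, if $\abs{t}\asymp 1/\abs{x}^3$, then $\abs{t}/\abs{x}^{k_2+3}\asymp 1/\abs{x}^{k_2+6}$ is strictly smaller than the uncontrolled $O(1/\abs{x}^{2k+2})$ for any admissible choice of $k$. So you cannot ``push the rest into the displayed remainder''. (There is an additional warning sign: the naive leading coefficient you would obtain is $4c_{k_1,k_2,(k_2-1)/2}$, which does not match $c_{k_1,k_2+1,(k_2+1)/2}$ from~\eqref{ck1k2}; this mismatch is another indication that the stationary-phase remainder contributes at leading order in this regime and cannot be discarded.)

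The paper sidesteps this with Lemma~\ref{lemmaTomega0}: Taylor expansion of $e^{i\abs{t}(\lambda,u_1)}$ in $\abs{t}$ (before any stationary phase) extracts the explicit factor $\abs{t}$ and reduces case~4 with $k_2$ odd to case~2 with $k_2+1$ even at $\omega=0$, where all quantities are evaluated at $t=0$ and the remainders inherit the factor $\abs{t}^3$ directly from Taylor's theorem with the explicit kernel bound $\abs{p_{1,k_1,k_2+2N+3}(x,0)}$. You would need a step of this kind --- a Taylor expansion in $\abs{t}$ applied at the level of the oscillatory integral, not of the stationary-phase coefficients --- to close case~4.
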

	The remainder of this section is devoted to the proof of Theorem~\ref{HstimeI}. Since it is quite involved, we split this section into two parts: in the first one we apply the method of stationary phase, while in the second one we find the asymptotics of the development given by Theorem~\ref{stationaryphase} which are required to get the sharp developments~\eqref{HstimeI_2}--\eqref{HstimeI_6}. These proofs go through several lemmata.

\begin{remark}
Notice that any {pair of terms} in the sums {appearing} in the developments~\eqref{HstimeI_2}, \eqref{HstimeI_3}, \eqref{HstimeI_5}, and~\eqref{HstimeI_6} are not comparable {with each other} under the stated asymptotic condition. Therefore, these developments cannot be simplified. Observe moreover that for $k_1$ and $k_2$ fixed the coefficients $b_{k_1,k_2,j}$ (resp.\ $c_{k_1,k_2,j}$) have the same sign; thus, no cancellation can occur, and our developments are indeed \emph{sharp}. {A more detailed description will be given in Section~\ref{sec:further}}.

Finally, notice that {it is possible to} obtain even more precise {expansions} if one does not develop the terms $L_{j,\psi_\omega}a_{k_1,k_2,\omega}$ which appear in Proposition~\ref{propHorm} {below}. In particular, in the cases when $\omega\to 0^+$ and $k_2=0$, or $\omega\to \frac{\pi}{2}$ and $k_1=0$, the explicit computation of $L_{0,\psi_\omega}a_{k_1,k_2,\omega} = a_{k_1,k_2}(iy_{\omega}u_1)$ leads to better remainders than those in~\eqref{HstimeI_2} and~\eqref{HstimeI_5} respectively.
\end{remark}

\subsubsection{Application of the Method of Stationary Phase}
As already said, Proposition~\ref{propHorm} below is essentially an easy generalization of Theorem \ref{HeisHstimeI}.
\begin{proposition}\label{propHorm}
Fix $C>0$ and let $k \in \N$. Then, if $(x,t)\to \infty$ while $0\leq \omega \leq C$,
\begin{equation}\label{eq:HstimeI}
			p_{1,k_1,k_2}(x,t) = \frac{1}{\abs{x}^m}e^{-\frac{1}{4}d(x,t)^2}\Psi(\omega)\left[ \sum_{j=0}^{k} \frac{4^j L_{j,\psi_\omega} a_{k_1,k_2,\omega}}{|x|^{2j}} + O\left(\frac{1}{\abs{x}^{2k +2}}\right)\right]
		\end{equation}		
{where $\Psi$ is defined by~\eqref{PsiI}.}
\end{proposition}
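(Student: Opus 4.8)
The plan is to mirror the proof of Theorem~\ref{HeisHstimeI}, the only new feature being that the contour deformation and the verification of the hypotheses of Theorem~\ref{stationaryphase} must now be carried out in $\R^m$. Recall that $p_{1,k_1,k_2}=\frac{1}{(4\pi)^n(2\pi)^m}h_{k_1,k_2}(R,t)$ with $h_{k_1,k_2}(R,t)=\int_{\R^m}e^{iR\phi_\omega(\lambda)}a_{k_1,k_2}(\lambda)\,\dd\lambda$. First I would move the contour from $\R^m$ to $\R^m+iy_\omega u_1$: fixing $\lambda'\in\R^{m-1}$ and integrating in the single complex variable $\lambda_1$, one applies Cauchy's theorem on the strip $\{0\le\Im\lambda_1\le y_\omega\}$, exactly as in the proof of Lemma~\ref{stime:lem:8}. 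This is legitimate because, up to an entire factor, the integrand depends on $\sqrt{\lambda^2}$ only through even functions, so it extends to a holomorphic function of $\lambda\in\C^m$ off the zero set of $\sinh\sqrt{\lambda^2}$, that is $\{\lambda\colon\lambda^2\in\{-\pi^2k^2:k\in\Z^*\}\}$; on the strip one checks that $\lambda^2\in\{-\pi^2k^2:k\in\Z^*\}$ forces $\Im\lambda_1\ge\pi$, impossible since $y_\omega<\pi$, and the exponential decay of the integrand as $\abs{\Re\lambda_1}\to\infty$ handles the horizontal sides. Since $\omega=\theta(y_\omega)$, one computes $\phi_\omega(iy_\omega u_1)=i\,y_\omega^2/\sin(y_\omega)^2$, whence $iR\,\phi_\omega(iy_\omega u_1)=-\frac14 d(x,t)^2$; setting $\psi_\omega\coloneqq\phi_\omega(\,\cdot\,+iy_\omega u_1)-\phi_\omega(iy_\omega u_1)$ and recalling~\eqref{ak1k2omega},
\[
h_{k_1,k_2}(R,t)=e^{-\frac14 d(x,t)^2}\int_{\R^m}e^{iR\psi_\omega(\lambda)}a_{k_1,k_2,\omega}(\lambda)\,\dd\lambda.
\]

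Next I would apply Theorem~\ref{stationaryphase} to the bounded subsets $\mathscr{F}=\{\psi_\omega:\omega\in[0,C]\}$ and $\mathscr{G}=\{a_{k_1,k_2,\omega}:\omega\in[0,C]\}$ of $\Ec(\R^m)$. Writing $\phi_\omega(\lambda)=\omega(\lambda,u_1)+G(\lambda^2)$ with $G(w)=i\sqrt w\coth\sqrt w$, the chain rule together with the $m=1$ identities of Lemma~\ref{stime:lem:7} gives $\nabla\phi_\omega(iy_\omega u_1)=0$ and the diagonal Hessian
\[
\psi_\omega''(0)=\phi_\omega''(iy_\omega u_1)=i\,\mathrm{diag}\!\left(\theta'(y_\omega),\tfrac{\omega}{y_\omega},\dots,\tfrac{\omega}{y_\omega}\right),
\]
where $\omega/y_\omega$ is understood as its limit $2/3$ when $\omega=0$ (equivalently $\phi_0''(0)=\tfrac{2i}{3}\,\mathrm{Id}_m$). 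Since this is $i$ times a positive definite diagonal matrix depending continuously on $\omega\in[0,C]$, condition~2 holds and, by Taylor's formula and the compactness of $[0,C]$, so does condition~3; condition~4 is immediate, since $\abs{a_{k_1,k_2,\omega}}$ is continuous (no poles are met, as $y_\omega<\pi$) and vanishes at infinity, uniformly as $\omega$ runs through $[0,C]$.

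The substantial point is condition~1: $\Im\psi_\omega>0$ on $\R^m\setminus\{0\}$ and $\Im\psi_\omega(\lambda)\ge c\abs{\lambda}$ for $\abs{\lambda}$ large. Using the identity $\Im\psi_\omega(\lambda)=\Re\!\left[\sqrt w\coth\sqrt w\right]-y_\omega\cot(y_\omega)$ with $w=(\lambda_1+iy_\omega)^2+\abs{\lambda'}^2$, and the formula $\Re(z\coth z)=\frac{u\sinh(2u)+v\sin(2v)}{2(\sinh(u)^2+\sin(v)^2)}$ for $z=u+iv$ (the analogue of Lemma~\ref{stime:lem:5}), I would note that $v=\Im\sqrt w$ satisfies $\abs{v}\in[0,y_\omega]$ — it is largest at $\abs{\lambda'}=0$, where $\sqrt w=\lambda_1+iy_\omega$ — and then chain $\Re(z\coth z)\ge v\cot(v)\ge y_\omega\cot(y_\omega)$: the first inequality reduces, after clearing denominators, to $u\coth u\ge v\cot v$, which holds because $u\coth u\ge1\ge v\cot v$ on $(0,\pi)$, and the second is the strict decrease of $y\mapsto y\cot y$ on $(0,\pi)$. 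Tracking the equality cases forces $\lambda=0$, and $\Re\sqrt w\sim\abs{\lambda}$ as $\abs{\lambda}\to\infty$ provides the linear lower bound, with uniformity in $\omega\in[0,C]$ again by compactness. This global positivity — in which the extra variables $\lambda'$ are tamed by the observation that they can only decrease $\abs{\Im\sqrt w}$ — is the step I expect to be the main obstacle.

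Finally, Theorem~\ref{stationaryphase} gives, uniformly for $\omega\in[0,C]$,
\[
\int_{\R^m}e^{iR\psi_\omega}a_{k_1,k_2,\omega}\,\dd\lambda=\sqrt{\frac{(2\pi i)^m}{R^m\det\psi_\omega''(0)}}\;\sum_{j=0}^k\frac{L_{j,\psi_\omega}a_{k_1,k_2,\omega}}{R^j}+O\!\left(\frac{1}{R^{m/2+k+1}}\right),
\]
and it then suffices to substitute $R=\abs{x}^2/4$, so that $R^{-m/2}=2^m\abs{x}^{-m}$, $R^{-j}=4^j\abs{x}^{-2j}$ and $R^{-m/2-k-1}=2^m4^{k+1}\abs{x}^{-m-2k-2}$, and to use $\det\psi_\omega''(0)=i^m\theta'(y_\omega)(\omega/y_\omega)^{m-1}$ with $\theta'(y)=2(\sin y-y\cos y)/\sin(y)^3$: the prefactor $\frac{2^m}{(4\pi)^n(2\pi)^m}\sqrt{(2\pi i)^m/\det\psi_\omega''(0)}$ collapses exactly to $\Psi(\omega)$ as in~\eqref{PsiI}, the $j$-th summand acquires the factor $4^j$, and the error becomes $O(\abs{x}^{-2k-2})$ once $\Psi(\omega)/\abs{x}^m$ is factored out — which is~\eqref{eq:HstimeI}. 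The value $\Psi(0)$ is recovered by continuity from the $\omega\neq0$ formula using $\omega\sim\tfrac23 y_\omega$ and $\sin y-y\cos y\sim y^3/3$ as $y\to0$.
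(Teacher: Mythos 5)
Your proposal follows the same strategy as the paper's proof: shift the contour to $\R^m+iy_\omega u_1$, factor out $e^{-\frac14 d(x,t)^2}$, and apply Theorem~\ref{stationaryphase} to $\mathscr{F}=\{\psi_\omega\}$, $\mathscr{G}=\{a_{k_1,k_2,\omega}\}$ for $\omega\in[0,C]$, with the same Hessian computation and the same bookkeeping $R=\abs{x}^2/4$ at the end. The one genuine deviation is your verification of condition 1 of Theorem~\ref{stationaryphase}. The paper disposes of it by citing \cite[Lemmata 5.3 and 5.7]{Eldredge}, whereas you give a self-contained argument: writing $\sqrt{w}=u+iv$ with $w=(\lambda_1+iy_\omega)^2+\abs{\lambda'}^2$, you observe that $\abs{v}\leq y_\omega$ (equality at $\lambda'=0$), reduce $\Re(\sqrt w\coth\sqrt w)\geq v\cot v$ to the elementary inequality $u\coth u\geq 1\geq v\cot v$ by clearing denominators in the Lemma~\ref{stime:lem:5} formula, and then use the monotonicity of $y\mapsto y\cot y$ on $(0,\pi)$. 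I checked that $\abs{v}\leq y_\omega$ indeed holds (the reduction after squaring comes down to $-4\lambda_1^2\abs{\lambda'}^2\leq 0$) and that the claimed cancellation yields $u\coth u\geq v\cot v$, so the argument is sound. This buys you independence from Eldredge's Lemmata 5.3 and 5.7 at the modest cost of a slightly longer verification; your appeal to compactness for the uniformity in $\omega$ and for the linear lower bound away from the origin is the same kind of argument the paper implicitly uses, and works for the same reason ($y_\omega$ stays in the compact $[0,\theta^{-1}(C)]\subset[0,\pi)$). The remaining steps — contour shift via Cauchy in $\lambda_1$ with $\lambda'$ frozen (the paper defers to \cite[Lemma 5.4]{Eldredge}), the diagonal Hessian $\psi_\omega''(0)=i\,\mathrm{diag}(\theta'(y_\omega),\omega/y_\omega,\dots)$, the Taylor-plus-compactness check of condition 3, and the final collapse of the prefactor to $\Psi(\omega)$ with the $\omega\to0$ limit — all match the paper.
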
	
	In the same way as in Section~\ref{sec_HeisI}, we begin by finding some stationary points of the phase of $h_{k_1,k_2}$, namely $\phi_\omega$.

	\begin{lemma}\emph{\cite[Formula (5.7)]{Eldredge}}\label{Hstime:lem:7}
		For all $\lambda$ such that $\sqrt{\lambda^2}\not\in i \pi \Z^*$,\[\phi_\omega'(\lambda)= \omega u_1 + \lambda\frac{\tilde \theta(i \sqrt{\lambda^2})}{\sqrt{\lambda^2}}\]
		where $\tilde\theta$ is the analytic continuation of $\theta$ to $\Dom(\phi_{\omega})$. In particular, $i y_\omega u_1$ is a stationary point of $\phi_\omega$.
	\end{lemma}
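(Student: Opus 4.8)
The plan is to deduce the formula from the one-dimensional identity already recorded as Lemma~\ref{stime:lem:7}, combined with the chain rule for $\lambda\mapsto\sqrt{\lambda^2}$. Write $\Phi(w)\coloneqq iw\coth(w)$, which is an \emph{even} function, holomorphic on $\C\setminus i\pi\Z^*$ (the factor $w$ removes the pole of $\coth$ at the origin). As recalled at the beginning of Section~\ref{sec:mgeneral}, $\lambda\mapsto\Phi(\sqrt{\lambda^2})$ is then a well-defined holomorphic function on $\{\lambda\colon\sqrt{\lambda^2}\notin i\pi\Z^*\}$, and there $\phi_\omega(\lambda)=\omega(\lambda,u_1)+\Phi(\sqrt{\lambda^2})$.

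First I would record that $\Phi'(w)=\tilde\theta(iw)$ as meromorphic functions on $\C$. For $m=1$ one has $(\lambda,u_1)=\lambda$, and since $\Phi$ is even, $\Phi(\sqrt{\lambda^2})=\Phi(\lambda)$; thus the phase in~\eqref{Hak1k2} specializes to $\phi_\omega(\lambda)=\omega\lambda+\Phi(\lambda)$, which is precisely the phase of Definition~\ref{hk1k2}. Comparing with Lemma~\ref{stime:lem:7} gives $\Phi'(\lambda)=\tilde\theta(i\lambda)$ for every $\lambda\notin i\pi\Z^*$, whence the claimed identity. (If one prefers a self-contained verification, a short computation gives $\Phi'(w)=i\,\dfrac{\tfrac12\sinh(2w)-w}{\sinh^2(w)}$, which is exactly the analytic continuation of $\theta(y)=(2y-\sin 2y)/(2\sin^2 y)$ evaluated at $y=iw$.)

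Next I would differentiate. Fixing $\lambda$ with $\sqrt{\lambda^2}\notin i\pi\Z^*\cup\{0\}$ and choosing locally a holomorphic branch of $\sqrt{\lambda^2}$, the relation $(\sqrt{\lambda^2})^2=\sum_j\lambda_j^2$ yields $\partial_{\lambda_j}\sqrt{\lambda^2}=\lambda_j/\sqrt{\lambda^2}$, so the chain rule gives
\[
\phi_\omega'(\lambda)=\omega u_1+\Phi'(\sqrt{\lambda^2})\,\frac{\lambda}{\sqrt{\lambda^2}}=\omega u_1+\tilde\theta\bigl(i\sqrt{\lambda^2}\bigr)\,\frac{\lambda}{\sqrt{\lambda^2}}.
\]
Since $\tilde\theta$ is odd, the scalar factor $\tilde\theta(i\sqrt{\lambda^2})/\sqrt{\lambda^2}$ is an even function of $\sqrt{\lambda^2}$, so the right-hand side is independent of the chosen branch, and it extends holomorphically across $\{\lambda^2=0\}$, which in particular settles the value $\lambda=0$. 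Finally, evaluating at $\lambda=iy_\omega u_1$ with $y_\omega=\theta^{-1}(\omega)\in[0,\pi)$ one has $\lambda^2=-y_\omega^2$, hence $\sqrt{\lambda^2}=\pm iy_\omega$ and
\[
\phi_\omega'(iy_\omega u_1)=\omega u_1+\tilde\theta(\mp y_\omega)(\pm u_1)=\bigl(\omega-\theta(y_\omega)\bigr)u_1=0,
\]
using that $\theta$ is odd and $\theta(y_\omega)=\omega$ (for $y_\omega=0$ this reads $\phi_\omega'(0)=\omega u_1=0$, consistent with the extension above). I do not expect a genuine obstacle: the only points requiring a little care are the branch-independence of the expressions involved, handled by the parity of $\Phi$ and $\tilde\theta$, and the removable behaviour at $\lambda=0$ when $\omega=0$.
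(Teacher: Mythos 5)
Your proof is correct. The paper gives no argument for this lemma, instead citing Eldredge's Formula (5.7), so there is nothing internal to compare against; your write-up is a clean self-contained verification, and the device of reading off $\Phi'(w)=\tilde\theta(iw)$ by specializing to $m=1$ and matching Lemma~\ref{stime:lem:7} is a tidy shortcut. The two points that actually need care — branch-independence of $\tilde\theta(i\sqrt{\lambda^2})/\sqrt{\lambda^2}$ via the oddness of $\tilde\theta$, and the removable singularity across $\{\lambda^2=0\}$ so the chain-rule derivation (valid a priori only where $\sqrt{\lambda^2}\neq 0$) extends to $\lambda=0$ — are both handled correctly, and the evaluation at $iy_\omega u_1$ using $\theta(y_\omega)=\omega$ is right.
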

	We then change the contour of integration in the integral defining $h_{k_1,k_2}$ in order to meet a stationary point of $\phi_\omega$. This is done in the following lemma, which is the analogue of Lemma~\ref{stime:lem:8}.

	\begin{lemma}\label{Hstime:lem:8}
		For every $y\in [0,\pi)$
		\[
		\begin{split}
		h_{k_1,k_2}(R,t)= \int_{\R^m} e^{i R\phi_\omega(\lambda+ i y u_1)} a_{k_1,k_2}(\lambda+i y u_1)\,\dd  \lambda.
		\end{split}
		\]
	\end{lemma}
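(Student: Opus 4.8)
The plan is to prove Lemma~\ref{Hstime:lem:8} by a contour-deformation argument, generalizing the one-dimensional Lemma~\ref{stime:lem:8} from the first layer to the whole of $\R^m$. The key observation is that the integrand $e^{iR\phi_\omega(\lambda)}a_{k_1,k_2}(\lambda)$ depends on $\lambda$ only through the pair $\bigl((\lambda,u_1),\sqrt{\lambda^2}\bigr)$: writing $\lambda=(\lambda_1,\lambda')$ with $\lambda_1=(\lambda,u_1)\in\R$ and $\lambda'\in\R^{m-1}$, one has $\sqrt{\lambda^2}=\sqrt{\lambda_1^2+|\lambda'|^2}$, so for fixed $\lambda'$ the function $\lambda_1\mapsto e^{iR\phi_\omega(\lambda_1,\lambda')}a_{k_1,k_2}(\lambda_1,\lambda')$ extends to a meromorphic function of $\lambda_1\in\C$ (via the chosen even holomorphic branches of $\cosh,\ \sinh^{-1}$ composed with $\sqrt{\cdot}$), whose only possible singularities lie where $\sqrt{\lambda_1^2+|\lambda'|^2}\in i\pi\Z^*$, i.e.\ where $\lambda_1^2 = -\pi^2 k^2 - |\lambda'|^2$ for some $k\in\N^*$; since $|\lambda'|^2\geq 0$, such $\lambda_1$ satisfy $\Im(\lambda_1)$ is real only if $\lambda_1$ is purely imaginary with $|\Im\lambda_1|\geq\pi$. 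Hence, for $y\in[0,\pi)$, the horizontal strip $\{0\leq \Im\lambda_1\leq y\}$ in the $\lambda_1$-plane is free of poles.

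First I would fix $\lambda'\in\R^{m-1}$ and apply the one-variable contour shift: integrate $e^{iR\phi_\omega(\,\cdot\,,\lambda')}a_{k_1,k_2}(\,\cdot\,,\lambda')$ around the rectangle with vertices $\pm N$, $\pm N+iy$ and let $N\to\infty$. No residues appear because the strip contains no poles (this is exactly where $y<\pi$ is used, and where the general-$m$ statement is actually simpler than the $m=1$ case, since the shift is only in the $u_1$-direction and the transversal variable $\lambda'$ keeps $\sqrt{\lambda^2}$ away from $i\pi\Z^*$ unless $\lambda'=0$, a null set). The vertical sides vanish in the limit: for this one needs a decay estimate on $|e^{iR\phi_\omega}a_{k_1,k_2}|$ as $\Re\lambda_1\to\pm\infty$ uniformly for $\Im\lambda_1\in[0,y]$, which follows from Lemma~\ref{stime:lem:5}-type bounds, namely $\Re[\lambda\coth\lambda]\gtrsim|\Re\lambda|$ and $|a_{k_1,k_2}|$ growing only polynomially while $\Im\phi_\omega$ grows linearly — exactly the ingredients recorded (for $m=1$) in Lemma~\ref{stime:lem:5}, applied with $\lambda$ replaced by $\sqrt{\lambda_1^2+|\lambda'|^2}$. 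This yields, for each fixed $\lambda'$,
\[
\int_\R e^{iR\phi_\omega(\lambda_1,\lambda')}a_{k_1,k_2}(\lambda_1,\lambda')\,\dd\lambda_1 = \int_\R e^{iR\phi_\omega(\lambda_1+iy,\lambda')}a_{k_1,k_2}(\lambda_1+iy,\lambda')\,\dd\lambda_1.
\]

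Then I would integrate this identity over $\lambda'\in\R^{m-1}$ and invoke Fubini–Tonelli to recombine into the $m$-dimensional integrals; the absolute integrability needed to justify the interchange comes from the same exponential decay in the $\Re\lambda_1$ direction together with the rapid (Gaussian-type) decay of $e^{-R\sqrt{\lambda^2}\coth\sqrt{\lambda^2}\cdots}$ in $|\lambda'|$ supplied by $\Re[\sqrt{\lambda^2}\coth\sqrt{\lambda^2}]>0$ and its linear growth. Note that $\phi_\omega(\lambda+iyu_1)=\omega(\lambda_1+iy)+i\sqrt{(\lambda_1+iy)^2+|\lambda'|^2}\coth(\cdots)$ matches $\phi_\omega$ evaluated at the shifted point, and likewise for $a_{k_1,k_2}$, so the right-hand side is literally $\int_{\R^m}e^{iR\phi_\omega(\lambda+iyu_1)}a_{k_1,k_2}(\lambda+iyu_1)\,\dd\lambda$, as claimed.

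The main obstacle I anticipate is the careful justification of the limit on the vertical sides of the rectangle \emph{uniformly} in $\lambda'$ and in $\omega\in[0,C]$ (so that the subsequent integration over $\lambda'$ and the later uniform stationary-phase estimates are legitimate): one must produce an explicit bound of the form $|e^{iR\phi_\omega(\lambda_1+i\eta,\lambda')}a_{k_1,k_2}(\lambda_1+i\eta,\lambda')|\leq P(|\lambda_1|,|\lambda'|)\,e^{-c|\Re\lambda_1|}e^{-c|\lambda'|}$ for $\eta\in[0,y]$, with $P$ polynomial and $c>0$ independent of the parameters, valid away from the excluded set $\{\lambda'=0,\ \Im\lambda_1\in\pi\Z\}$. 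This is a routine but somewhat delicate computation with $\Re[\sqrt{\lambda^2}\coth\sqrt{\lambda^2}]$ for complex arguments; since the paper states the analogous Lemma~\ref{stime:lem:5} without proof and calls this lemma "the analogue of Lemma~\ref{stime:lem:8}," I expect the argument to parallel~\cite[Lemma 1.4]{HueberMuller} closely, and I would present it as such, spelling out only the modification needed to accommodate the transversal variable $\lambda'$.
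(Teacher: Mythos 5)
Your proposal is correct and takes the same slice-in-$\lambda_1$, contour-shift-and-Fubini route as the paper, whose proof is just a pointer to~\cite[Lemma~5.4]{Eldredge} together with the formula for $\abs{a_{k_1,k_2}}$ on the shifted slab that you also invoke. The only slip is the aside \emph{unless $\lambda'=0$, a null set}: the strip $\{0\le\Im\lambda_1\le y\}$ is in fact pole-free on every slice, $\lambda'=0$ included, because a pole at $\lambda_1+i\eta$ with $\eta\in[0,y]$ would force $\lambda_1\eta=0$ and $\eta^2=\abs{\lambda'}^2+\pi^2 k^2\ge\pi^2$ for some $k\in\N^*$, which is impossible once $\eta\le y<\pi$.
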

	
	\begin{proof}
		The theorem is proved in a similar fashion to \cite[Lemma~5.4]{Eldredge}. It may be useful to observe that  for every $\lambda\in \C^m$ such that either $\Im\sqrt{\lambda^2}\notin \pi \Z$ or $\Re\sqrt{\lambda^2}\neq 0$, we have
		\[
		\abs{a_{k_1,k_2}(\lambda)}= \frac{\abs{\lambda}^{n+k_1} \left(\sinh\left(\Re\sqrt{\lambda^2}\right)^2+\cos\left( \Im\sqrt{\lambda^2}\right)^2\right)^{k_1/2}   }{\left(\sinh\left(\Re\sqrt{\lambda^2}\right)^2+\sin\left( \Im\sqrt{\lambda^2}\right)^2\right)^{(n+k_1)/2}}\abs*{(\lambda,u_1)}^{k_2},
		\]
by Lemma~\ref{stime:lem:5}, since $\abs{\sqrt{\lambda^2}}=\abs{\lambda}$. Moreover,  $a_{k_1,k_2}$ is bounded on the set $\{\lambda+i y u_1 \colon \lambda\in \R^m, y\in [0,C']\}$ for every $C'\in (0,\pi)$.
	 \end{proof}

	\begin{proof}[Proof of Proposition~\ref{propHorm}] 
		Define
		\[
		\psi_\omega\:= \phi_\omega(\,\cdot\,+i y_\omega u_1)-\phi_\omega(i y_\omega u_1)
		\]
		and observe that, since $\sqrt{(i y_\omega u_1)^2}=\pm i y_\omega$ and  $\omega=\theta(y_\omega)$, $\phi_\omega(i y_\omega u_1) =i\frac{y_\omega^2}{\sin(y_\omega)^2}$. Therefore, by Lemma~\ref{Hstime:lem:8}
		\[
		h_{k_1,k_2}(R,t)=e^{-\frac{1}{4}d(x,t)^2} \int_\R e^{i R\psi_\omega(\lambda)}a_{k_1,k_2}(\lambda+i y_\omega u_1)\,\dd  \lambda.
		\]
		We shall apply Theorem~\ref{stationaryphase} to the bounded subsets $\mathscr{F}=\{\psi_\omega \colon \omega\in [0, C]\}$ and $\mathscr{G}= \{a_{k_1,k_2,\omega}\colon \omega\in [0,C]\}$ of $\Ec(\R^m)$.
		\begin{enumerate}
			\item[2.] Elementary computations show that
\begin{equation}\label{psider2}
			-i\psi''_\omega(0)= \theta'(y_\omega)u_1 \otimes u_1+ \frac{\omega}{y_\omega} \sum_{j=2}^m u_j\otimes u_j,
\end{equation}
			so that $ \det(-i\psi''_\omega(0)) = \theta'(y_\omega)\left( \frac{\theta(y_w)}{y_w}\right) ^{m-1}>0$. The conditions $	\psi_\omega(0)=\psi'_\omega(0)=0$ hold by construction.
			
			\item[3.] Consider the mapping  $\psi\colon \R^m\times (-\pi,\pi)\ni(\lambda,y)\mapsto \psi_{\theta(y)}(\lambda)$. Then, by the preceding arguments, there is $c>0$ such that $\partial_1\psi(0,y)=0$ and $-i \partial_1^2 \psi(0,y)\geq c (\,\cdot\,,\,\cdot\,)$ for all $y\in [0,\pi)$; moreover, $\psi$ is analytic by Lemma~\ref{stime:lem:10}. Therefore, by Taylor's formula we may find two constants $\eta >0$ and $C'>0$ such that $ \abs{\partial_1\psi(\lambda,y)}\geq C' \abs*{\lambda}$ for all $\lambda\in B_{\R^m}(0,2\eta)$ and for all $y\in [0,\theta^{-1}(C)]$.
			
			\item[1.] Combining~\cite[Lemmata 5.3 and 5.7]{Eldredge}, we infer that there is a constant $C''>0$ such that
			\[\begin{split}
			\Im\,\psi(\lambda,y)&= y\theta(y) + \Re\left[\sqrt{(\lambda +i y u_1)^2}\coth\sqrt{(\lambda +i y u_1)^2}\right] - \frac{y^2}{\sin^2 y} 
			\geq C''\abs*{\lambda}
			\end{split}\]
			whenever $\abs{\lambda}\geq \eta$ and $0\leq y\leq \theta^{-1}(C)$.
			\item[4.]  Just observe that $\mathscr{G}$ is bounded in $L^\infty(\R^m)$.
		\end{enumerate}
By Theorem~\ref{stationaryphase}, then,
		
		\begin{equation*}
			\begin{split}
				\int_{\R^m}  e^{i R\psi_\omega(\lambda)}a_{k_1,k_2}(\lambda+i y_\omega u_1)\,\dd  \lambda 
				= \frac{(2\pi)^m (4\pi)^n}{|x|^m}\Psi(\omega)  \sum_{j=0}^k \frac{4^j L_{j,\psi_\omega}a_{k_1,k_2,\omega}}{\abs{x}^{2j}}+ O\left(\frac{1}{\abs{x}^{m+2k+2}}\right)
			\end{split}
		\end{equation*}	
		for $R\to +\infty$,	uniformly as $\omega$ runs through $[0,C]$. 
	 \end{proof}

	\subsubsection{Further Developments and Completion of the Proof of Theorem~\ref{HstimeI}}\label{sec:further}
We begin by recalling that, for every $j\in \N$,
\begin{equation}\label{Lj}
	L_{j,\psi_\omega}a_{k_1,k_2,\omega}= i^{-j}\sum_{\mu=0}^{2j}\frac{ (\psi_\omega''(0)^{-1} \partial, \partial )^{\mu+j} [ (\psi_\omega-P_{2,0}\psi_\omega )^\mu a_{k_1,k_2,\omega} ](0)}{ 2^{\mu+j} \mu !(\mu+j) !}.
\end{equation}	
Thus, the point 1 of Theorem~\ref{HstimeI} follows immediately by taking $k=0$ in Proposition~\ref{propHorm}, since
\[L_{0,\psi_\omega}a_{k_1,k_2,\omega} = a_{k_1,k_2,\omega}(0) = a_{k_1,k_2}(iy_{\omega}u_1).\]
As for the other developments, observe that by~\eqref{psider2}
\begin{align} \label{psi2partialpartial}
		&( \psi_\omega''(0)^{-1}\partial, \partial )^{\mu+j} [ (\psi_\omega-P_{2,0}\psi_\omega)^\mu a_{k_1,k_2,\omega} ](0) \nonumber \\
		&\qquad=\sum_{\abs{\alpha}=\mu+j  } \frac{ (\mu+j)! }{\alpha !} \frac{1}{ (i \theta'(y_\omega))^{\alpha_1} } \left( \frac{y_\omega}{i\omega}   \right)^{\abs{\alpha}-\alpha_1} \partial^{2\alpha}  [ (\psi_\omega-P_{2,0}\psi_\omega)^\mu a_{k_1,k_2,\omega} ] (0).
\end{align}
	where
	\begin{multline}\label{omega0mu}
			\partial^{2 \alpha} [ (\psi_\omega-P_{2,0}\psi_\omega)^\mu a_{k_1,k_2,\omega} ] (0)\\= \sum_{\substack{\beta\leq 2 \alpha,\\ \abs{\beta}\geq 3 \mu}} \frac{(2 \alpha)!}{\beta !\, (2\alpha-\beta) !}\partial^{\beta}[(\psi_\omega-P_{2,0}\psi_\omega)^\mu](0)\, \partial^{2\alpha-\beta} a_{k_1,k_2}(i y_\omega u_1).
	\end{multline}
	The sum above is restricted to $|\beta|\geq 3\mu$ since $\psi_\omega(\lambda)- P_{2,0}\psi_\omega(\lambda)$ is infinitesimal of order at least $3$ for $\lambda\to 0$. {Observe moreover that, since $\abs{2\alpha-\beta}= 2\abs{\alpha}-\abs{\beta}\leq 2 j-\mu$, we have $\abs{2\alpha-\beta}\leq 2j$ and $\abs{2\alpha-\beta}=2j$ if and only if $\mu=0$ and $\beta=0$.} We first consider the case $\omega \to 0$.

	\begin{lemma}\label{lemmaHomega0}
		For every $j\in\N$ such that $2 j \leq k_2$, define
		\begin{equation}\label{ck1k2}
c_{k_1,k_2,j}\coloneqq (-1)^{k_1+k_2} \frac{3^{k_2-j}k_2 !}{2^{k_2-2j}(k_2-2 j)!  j!}		.
		\end{equation}
		Then
		\[
			4^j L_{j,\psi_\omega} a_{k_1,k_2,\omega}= c_{k_1,k_2,j}\omega^{k_2-2j} + O\left(\omega^{k_2-2 j+1}  \right)
		\]
		for $\omega\to 0$.
	\end{lemma}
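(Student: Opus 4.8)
The plan is to read off the leading order of $4^{j}L_{j,\psi_\omega}a_{k_1,k_2,\omega}$ as $\omega\to0$ from the formula~\eqref{Lj} together with the expansions~\eqref{psi2partialpartial}--\eqref{omega0mu}, tracking the order of vanishing in $\omega$ of every factor. First I would record the asymptotics coming from Lemma~\ref{stime:lem:10}: since $\theta$ is odd, analytic and $\theta'(0)=\tfrac23$, one has $y_\omega=\theta^{-1}(\omega)=\tfrac32\omega+O(\omega^3)$, $\theta'(y_\omega)=\tfrac23+O(\omega^2)$ and $\omega/y_\omega=\tfrac23+O(\omega^2)$; hence, by~\eqref{psider2}, the matrix $\psi_\omega''(0)^{-1}$ is diagonal with first entry $(i\theta'(y_\omega))^{-1}$ and remaining entries $y_\omega/(i\omega)$, all tending to $\tfrac{3}{2i}$. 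I would also use that $a_{k_1,k_2}(\lambda)=(-1)^{k_1}i^{k_2}(\lambda,u_1)^{k_2}g(\sqrt{\lambda^2})$ with $g$ even, analytic and $g(0)=1$, so that $a_{k_1,k_2}$ is analytic near $0$ and its Taylor expansion there begins with the degree-$k_2$ term $(-1)^{k_1}i^{k_2}\lambda_1^{k_2}$, where $\lambda_1=(\lambda,u_1)$; in particular $\partial^{\gamma}a_{k_1,k_2}(iy_\omega u_1)=O(\omega^{(k_2-|\gamma|)_+})$ as $\omega\to0$, since $iy_\omega u_1=O(\omega)$.

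The core of the argument is that the unique contribution of order $\omega^{k_2-2j}$ comes from the summand $\mu=0$ of~\eqref{Lj} with the multi-index $\alpha=(j,0,\dots,0)$. Indeed, for $\mu=0$ the sum~\eqref{omega0mu} collapses to $\beta=0$ (as $(\psi_\omega-P_{2,0}\psi_\omega)^0\equiv1$), so that summand equals $i^{-j}(2^{j}j!)^{-1}\sum_{|\alpha|=j}\tfrac{j!}{\alpha!}(i\theta'(y_\omega))^{-\alpha_1}(y_\omega/(i\omega))^{j-\alpha_1}\,\partial^{2\alpha}a_{k_1,k_2}(iy_\omega u_1)$. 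Applying $\partial^{2\alpha}$ with $|\alpha|=j$ to the leading part $\lambda_1^{k_2}$ of $a_{k_1,k_2}$ gives zero unless $\alpha=(j,0,\dots,0)$ (a derivative in any direction $u_k$, $k\ge2$, kills $\lambda_1^{k_2}$), in which case one gets $\tfrac{k_2!}{(k_2-2j)!}\lambda_1^{k_2-2j}$; the higher-order terms of $a_{k_1,k_2}$ contribute only $O(\omega^{k_2-2j+2})$, and so do the $O(\omega^2)$ errors in $(i\theta'(y_\omega))^{-j}$ and in $(iy_\omega)^{k_2-2j}$. Substituting $\theta'(y_\omega)=\tfrac23(1+O(\omega^2))$ and $(iy_\omega)^{k_2-2j}=(\tfrac{3i}{2})^{k_2-2j}\omega^{k_2-2j}(1+O(\omega^2))$ and collecting the powers of $i$, $2$ and $3$ together with the factors $(2^{j}j!)^{-1}$, $\tfrac{k_2!}{(k_2-2j)!}$ and $(-1)^{k_1}i^{k_2}$, this summand becomes $4^{-j}c_{k_1,k_2,j}\,\omega^{k_2-2j}+O(\omega^{k_2-2j+1})$, which after multiplication by $4^{j}$ gives the claimed leading term.

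It then remains to bound all the other contributions to~\eqref{Lj} by $O(\omega^{k_2-2j+1})$. For $\mu\ge1$ every multi-index occurring in~\eqref{omega0mu} satisfies $|2\alpha-\beta|=2(\mu+j)-|\beta|\le2j-\mu\le2j-1<k_2$, so $\partial^{2\alpha-\beta}a_{k_1,k_2}(iy_\omega u_1)=O(\omega^{k_2-2j+1})$; the remaining factors $\partial^{\beta}[(\psi_\omega-P_{2,0}\psi_\omega)^{\mu}](0)$ and $(i\theta'(y_\omega))^{-\alpha_1}(y_\omega/(i\omega))^{|\alpha|-\alpha_1}$ are bounded as $\omega\to0$ (the former converges to $\partial^{\beta}[(\psi_0-P_{2,0}\psi_0)^{\mu}](0)$ by analyticity of $(\lambda,y)\mapsto\psi_{\theta(y)}(\lambda)$, cf.\ the proof of Proposition~\ref{propHorm}), whence each such term is $O(\omega^{k_2-2j+1})$; for $\mu=0$ and $\alpha\ne(j,0,\dots,0)$ the leading part $\lambda_1^{k_2}$ of $a_{k_1,k_2}$ is annihilated by $\partial^{2\alpha}$, so the contribution is $O(\omega^{k_2-2j+2})$. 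Summing up yields $4^{j}L_{j,\psi_\omega}a_{k_1,k_2,\omega}=c_{k_1,k_2,j}\omega^{k_2-2j}+O(\omega^{k_2-2j+1})$. I expect the only genuine obstacle to be the constant bookkeeping: one must check carefully that the scattered powers of $i$ (from $i^{-j}$, $(i\theta'(y_\omega))^{-j}$, $i^{k_2}$ and $(iy_\omega)^{k_2-2j}$) and the combinatorial factors assemble exactly into $(-1)^{k_1+k_2}\tfrac{3^{k_2-j}k_2!}{2^{k_2-2j}(k_2-2j)!\,j!}$; everything else is a routine order count.
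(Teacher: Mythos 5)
Your proof is correct and follows essentially the same approach as the paper: extract the dominant contribution (the $\mu=0$, $\alpha=ju_1$ term) from formula~\eqref{Lj} via the expansions~\eqref{psi2partialpartial} and~\eqref{omega0mu}, use the leading Taylor term $(-1)^{k_1}i^{k_2}\lambda_1^{k_2}$ of $a_{k_1,k_2}$ at the origin together with $\theta'(0)=2/3$ to produce $c_{k_1,k_2,j}\omega^{k_2-2j}$, and bound the remaining terms by $O(\omega^{k_2-2j+1})$ through a degree count on $\partial^{2\alpha-\beta}a_{k_1,k_2}(iy_\omega u_1)$. You are somewhat more explicit about the constant bookkeeping and about why the auxiliary factors $\partial^\beta[(\psi_\omega-P_{2,0}\psi_\omega)^\mu](0)$, $\theta'(y_\omega)^{-\alpha_1}$ and $(y_\omega/\omega)^{|\alpha|-\alpha_1}$ stay bounded as $\omega\to 0$, which the paper leaves implicit, but there is no difference in substance.
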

	
	\begin{proof}
		Recall that $a_{k_1,k_2}$ is an analytic function on its domain, and observe that\footnote{Here and in the following, $\lambda_1$ stands for $(\lambda, u_1)$.}
\begin{equation*}
		a_{k_1,k_2}(\lambda)=(-1)^{k_1}i^{k_2}\lambda_1^{k_2}+O\left(\abs{\lambda}^{k_2+2}\right)
\end{equation*}
		for $\lambda\to 0$. Therefore, for every $h=0,\dots, k_2$ we have
		\begin{equation}\label{asympak1k2in0}
			a_{k_1,k_2}^{(h)}(\lambda)= (-1)^{k_1}i^{k_2}\frac{k_2 !}{(k_2-h)!} \lambda_1^{k_2-h} u_1^{\otimes h}  + O\left(\abs{\lambda}^{k_2-h+2}  \right)  
			\end{equation}
	as $\lambda\to 0$. 

We now consider~\eqref{omega0mu}. If $\abs{2\alpha-\beta} < 2 j$, then by~\eqref{asympak1k2in0}
\[
\partial^\beta[(\psi_\omega-P_{2,0} \psi_\omega)^\mu](0) \partial^{2\alpha-\beta} a_{k_1,k_2}(i y_\omega u_1)= O\left(  y_\omega^{k_2-\abs{2\alpha-\beta}}  \right) =O\left( y_\omega^{k_2- 2 j+1} \right)
\]
for $\omega\to 0$. {Otherwise, let $\abs{2\alpha-\beta}=2 j$, so that $\mu=0$ and $\beta=0$}. If $\alpha\neq j u_1$, then~\eqref{asympak1k2in0} implies that
\[
\partial^{2 \alpha} a_{k_1,k_2}(i y_\omega u_1)= O\left(y_\omega^{k_2- 2 j+2}  \right)=O\left( y_\omega^{k_2-2 j+1}  \right),
\]
while, if $\alpha=j u_1$,
\[
\partial_1^{2 j} a_{k_1,k_2}(i y_\omega u_1)=(-1)^{k_1+k_2} i^{-2 j} \frac{k_2 !}{(k_2- 2 j)!} y_\omega^{k_2-2 j}. 
\]
From this and the fact that \[\theta'(0)=\lim\limits_{\omega\to 0} \frac{\omega}{y_\omega}=\frac{2}{3}\]
we get the asserted estimate.
	 \end{proof}
{
Lemma~\ref{lemmaHomega0} above gives the expansions 2 and 3 of Theorem~\ref{HstimeI}. Indeed, it allows us to choose $k$ in Proposition~\ref{propHorm} as
\begin{itemize}
\item[2.] $k=k_2/2$ if $k_2$ is even, since in this case the last term of the sum in~\eqref{eq:HstimeI} is
\[\frac{c_{k_1,k_2,k_2/2}}{|x|^{k_2}} + O\left(\frac{\omega}{|x|^{k_2}}\right)\]
which is bigger than the remainder.
\item[3.] $k=(k_2-1)/2$ if $k_2$ is odd and $|t|\to \infty$, since in this case the last term of the sum in~\eqref{eq:HstimeI} is
\[c_{k_1,k_2,(k_2-1)/2}\frac{\omega}{|x|^{k_2-1}} + O\left(\frac{\omega^2}{|x|^{k_2-1}}\right) = c_{k_1,k_2,(k_2-1)/2}\frac{|t|}{|x|^{k_2+1}} + O\left(\frac{|t|^2}{|x|^{k_2+3}}\right)\]
which is bigger than the remainder, since $|t|\to \infty$.
\end{itemize}
}	

The case 4 of Theorem~\ref{HstimeI}, that is the case when $k_2$ is odd, $\omega \to 0$ and $|t|$ is bounded, has to be treated in a different way, since $\omega/|x|^{k_2-1}$ may be comparable {with the remainder} $1/|x|^{k_2+1}$ {or even smaller}. Thus, the development given above may not be sharp in this case. To overcome this difficulty, we make use of the following lemma. For the reader's convenience, we also consider $k_2$ even and a stronger statement than that we need (see Remark~\ref{rem:sharpness}).

\begin{lemma}\label{lemmaTomega0}
Let $N\in \N$. Then, when $\omega \to 0$,
\[
p_{1,k_1,k_2}(x,t)= \sum_{h=0}^N\frac{1}{(2h+1)!}|t|^{2h+1}p_{1,k_1,k_2+2h+1}(x,0) + O\left(\abs{t}^{2N+3} p_{1,k_1,k_2+2N+3}(x,0)\right)
\]
if $k_2$ odd; if $k_2$ is even
\[
p_{1,k_1,k_2}(x,t)= \sum_{h=0}^N\frac{1}{(2h)!}|t|^{2h}p_{1,k_1,k_2+2h}(x,0) + O\left(\abs{t}^{2N+2} p_{1,k_1,k_2+2N+2}(x,0)\right).
\]
\end{lemma}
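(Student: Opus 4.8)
The plan is to expand the oscillatory factor $e^{i|t|(\lambda,u_1)}$ appearing in formula~\eqref{pk1k2} (taken at $s=1$) as a finite Taylor polynomial in $|t|$ and to integrate term by term, exploiting the parity in $\lambda$ of the remaining integrand. The starting point is that, by~\eqref{pk1k2} evaluated at $t=0$,
\[
p_{1,k_1,j}(x,0)=\frac{(-1)^{k_1}i^{j}}{(4\pi)^n(2\pi)^m}\int_{\R^m} e^{-R|\lambda|\coth|\lambda|}\frac{|\lambda|^{n+k_1}\cosh(|\lambda|)^{k_1}}{\sinh(|\lambda|)^{n+k_1}}(\lambda,u_1)^{j}\,\dd\lambda,
\]
which vanishes for every odd $j$, the integrand being then odd under $\lambda\mapsto-\lambda$.

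For $k_2$ even I would use the elementary bound $\bigl|e^{iy}-\sum_{p=0}^{K-1}(iy)^p/p!\bigr|\leq|y|^K/K!$ ($y\in\R$) with $K=2N+2$ and $y=|t|(\lambda,u_1)$. Substituting the Taylor polynomial into~\eqref{pk1k2} and interchanging the finite sum with the integral, the $p$-th summand equals $\tfrac{(i|t|)^{p}}{p!}$ times $\frac{(-1)^{k_1}i^{k_2}}{(4\pi)^n(2\pi)^m}\int_{\R^m} e^{-R|\lambda|\coth|\lambda|}\frac{|\lambda|^{n+k_1}\cosh(|\lambda|)^{k_1}}{\sinh(|\lambda|)^{n+k_1}}(\lambda,u_1)^{k_2+p}\,\dd\lambda$, which by the formula above (with $j=k_2+p$) is $i^{-p}\,p_{1,k_1,k_2+p}(x,0)$; hence the $p$-th summand is exactly $\tfrac{|t|^{p}}{p!}\,p_{1,k_1,k_2+p}(x,0)$. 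The odd-$p$ terms vanish by the parity remark, so the sum over $p\in\{0,\dots,2N+1\}$ reduces to $\sum_{h=0}^{N}\tfrac{|t|^{2h}}{(2h)!}\,p_{1,k_1,k_2+2h}(x,0)$, which is the claimed main term.

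It then remains to control the error $E$, which is the integral in~\eqref{pk1k2} with $e^{i|t|(\lambda,u_1)}$ replaced by the corresponding Taylor remainder. By the bound above,
\[
|E|\leq\frac{|t|^{2N+2}}{(2N+2)!}\cdot\frac{1}{(4\pi)^n(2\pi)^m}\int_{\R^m} e^{-R|\lambda|\coth|\lambda|}\frac{|\lambda|^{n+k_1}\cosh(|\lambda|)^{k_1}}{\sinh(|\lambda|)^{n+k_1}}\,|(\lambda,u_1)|^{k_2+2N+2}\,\dd\lambda .
\]
The one point that really matters is that $k_2+2N+2$ is even, so $(\lambda,u_1)^{k_2+2N+2}=|(\lambda,u_1)|^{k_2+2N+2}\geq 0$ and the last integral is precisely $|p_{1,k_1,k_2+2N+2}(x,0)|$ (since $i^{k_2+2N+2}=\pm1$). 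Therefore $|E|\leq\tfrac{1}{(2N+2)!}\,|t|^{2N+2}\,|p_{1,k_1,k_2+2N+2}(x,0)|$, which is the asserted remainder; in fact this bound holds uniformly on all of $G$, not merely as $\omega\to0$. The case $k_2$ odd is handled in exactly the same way, taking $K=2N+3$: now it is the even-$p$ summands that vanish, the main term is $\sum_{h=0}^{N}\tfrac{|t|^{2h+1}}{(2h+1)!}\,p_{1,k_1,k_2+2h+1}(x,0)$, and the same estimate yields an error $O\bigl(|t|^{2N+3}\,p_{1,k_1,k_2+2N+3}(x,0)\bigr)$.

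The remaining verifications are routine and I expect no genuine obstacle: all the integrals converge because $|\lambda|^{n+k_1}\cosh(|\lambda|)^{k_1}/\sinh(|\lambda|)^{n+k_1}$ decays like $e^{-n|\lambda|}$ up to polynomial factors while $e^{-R|\lambda|\coth|\lambda|}\le1$, and the interchange of the finite Taylor sum with the integral is clearly legitimate. The entire content of the proof is thus the bookkeeping identification of the Taylor coefficients with the $p_{1,k_1,k_2+p}(x,0)$, together with the observation that expanding to the order of the correct parity makes the crude remainder estimate $|y|^{K}/K!$ already sharp enough to reproduce exactly the stated error term.
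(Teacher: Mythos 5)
Your proof is correct and is essentially the paper's: both Taylor-expand the factor $e^{i|t|(\lambda,u_1)}$ in formula~\eqref{pk1k2}, exploit the $\lambda\mapsto-\lambda$ parity of the integrand so that only terms of the right parity survive, and control the error by the Lagrange remainder bound for the exponential. The only cosmetic difference is that you make the parity vanishing (that $p_{1,k_1,j}(x,0)=0$ for $j$ odd) explicit before bounding, whereas the paper subtracts the surviving Taylor terms directly and absorbs the parity cancellation into the displayed inequality; the content is identical.
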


\begin{proof}
Assume that $k_2$ is odd. Then
\begin{align*}
&(4\pi)^n(2\pi)^m\abs*{p_{1,k_1,k_2}(x,t) - \sum_{h=0}^N\frac{1}{(2h+1)!}|t|^{2h+1}p_{1,k_1,k_2+2h+1}(x,0) }
\\&= \abs*{\int_{\R^m} e^{-\frac{|x|^2}{4}|\lambda|\coth|\lambda|}\frac{\abs{\lambda}^{n+k_1}\cosh(|\lambda|)^{k_1}}{\sinh(|\lambda|)^{n+k_1}}(\lambda,u_1)^{k_2}\left\{ e^{i|t|(\lambda,u_1)} -\sum_{h=0}^N\frac{\left[i|t|(\lambda,u_1)\right]^{2h+1}}{(2h+1)!}\right\}\,\dd \lambda}
\\& \leq \frac{|t|^{2N+3}}{(2 N+3)!}\int_{\R^m} e^{-\frac{|x|^2}{4}|\lambda|\coth|\lambda|}\frac{\abs{\lambda}^{n+k_1}\cosh(|\lambda|)^{k_1}}{\sinh(|\lambda|)^{n+k_1}}(\lambda,u_1)^{k_2+2N+3}\,\dd \lambda\\&= \frac{(4\pi)^n(2\pi)^m }{(2 N+3)!} \abs{t}^{2N+3} \abs*{p_{1,k_1,k_2+2N+3}(x,0)}.
\end{align*}
The first assertion is then proved. The proof in the case $k_2$ even is analogous.
 \end{proof}
Thus, the case $\omega\to 0$ while $|t|$ remains bounded when $k_2$ is odd can be related to the same case when $k_2$ is even, which is completely described by Lemma~\ref{lemmaHomega0}. {Observe that the expansion appearing in Theorem~\ref{HstimeI}, 4, is obtained with the choice $N=0$ in Lemma~\ref{lemmaTomega0}.}

We finally consider the case $\omega \to \frac{\pi}{2}$, which as above provides the expansions 5 and 6 of Theorem~\ref{HstimeI}.
	\begin{lemma}\label{lemmaHomegapi2}
Define, for $j\in \N$ such that $2j\leq k_1$,
\begin{equation}\label{bk1k2j}
b_{k_1,k_2,j}\coloneqq 
		(-1)^{k_2} \frac{k_1 !}{2^{k_1-2j}(k_1-2 j)!  j!}\ \left(\frac{\pi}{2}\right)^{n+k_1+k_2},
\end{equation}	
and, when $k_1$ is odd,
\begin{equation}\label{bk1k2odd}
b_{k_1,k_2,(k_1+1)/2}\coloneqq(-1)^{k_2}\frac{(k_1+1)!}{[(k_1+1)/2]!}\left(\frac{\pi}{2}\right)^{n+k_1+k_2-1}\left( n+k_1+k_2+\frac{\pi^2}{24}(k_1+2) +\frac{3}{2}(m-1)  \right).
\end{equation}
		Then, for $\omega\to \frac{\pi}{2}$, if $2j \leq k_1$
		\[
4^j L_{j,\psi_\omega} a_{k_1,k_2,\omega}= b_{k_1,k_2,j} \left(\omega-\frac{\pi}{2}\right)^{k_1-2 j} + O\left(\left(\omega-\frac{\pi}{2}\right)^{k_1-2 j+1}  \right)
		\]
		while if $k_1$ is odd, then
	\[2^{k_1+1}	L_{(k_1+1)/2, \psi_\omega}a_{k_1,k_2,\omega}=
b_{k_1,k_2,(k_1+1)/2} + O\left(\omega-\frac{\pi}{2}\right).\]
	\end{lemma}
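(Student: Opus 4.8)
The plan is to imitate the proof of Lemma~\ref{lemmaHomega0}, now working near the stationary point $iy_\omega u_1$ with $y_\omega=\theta^{-1}(\omega)\to\frac\pi2$. Set $\eta\coloneqq y_\omega-\frac\pi2$; since $\theta(\frac\pi2)=\frac\pi2$, $\theta'(\frac\pi2)=2$ and $\theta''(\frac\pi2)=\pi$, we have $\eta=\frac12(\omega-\frac\pi2)+O((\omega-\frac\pi2)^2)$, so it suffices to produce expansions in powers of $\eta$. The first ingredient is the analogue of~\eqref{asympak1k2in0} at $i\frac\pi2 u_1$: writing $a_{k_1,k_2}(\lambda)=(-1)^{k_1}i^{k_2}\bigl(\tfrac{\sqrt{\lambda^2}}{\sinh\sqrt{\lambda^2}}\bigr)^{n+k_1}\cosh(\sqrt{\lambda^2})^{k_1}\lambda_1^{k_2}$, using that $\cosh$ has a \emph{simple} zero at $i\frac\pi2$ while $\tfrac{w}{\sinh w}$ and $\lambda_1$ are analytic and non-zero there, and expanding $\sqrt{(\lambda+iy_\omega u_1)^2}=iy_\omega+\lambda_1+O(|\lambda|^2)$, I will show that for every $h\le k_1$
\[
\partial^h a_{k_1,k_2}(iy_\omega u_1)=(-1)^{k_2}(-i)^h\,\frac{k_1!}{(k_1-h)!}\Bigl(\frac\pi2\Bigr)^{n+k_1+k_2}\eta^{\,k_1-h}\,u_1^{\otimes h}+O\bigl(\eta^{\,k_1-h+1}\bigr)
\]
as $\omega\to\frac\pi2$; in particular $\partial^h a_{k_1,k_2}(iy_\omega u_1)$ is infinitesimal of order $k_1-h$ in $\eta$ for $h<k_1$, and its leading part is always a multiple of $u_1^{\otimes h}$.

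Next I substitute this into the expansion~\eqref{Lj}--\eqref{omega0mu} of $L_{j,\psi_\omega}a_{k_1,k_2,\omega}$, recalling from~\eqref{psider2} that $-i\psi''_{\pi/2}(0)=2\,u_1\otimes u_1+\sum_{l\ge2}u_l\otimes u_l$, hence $\bigl(\psi''_\omega(0)^{-1}\partial,\partial\bigr)=-i\bigl(\tfrac12\partial_1^2+\sum_{l\ge2}\partial_l^2\bigr)+O(\eta)$ as operators. For $2j\le k_1$ every term of~\eqref{Lj} with $\mu\ge1$ forces a derivative $\partial^{2\alpha-\beta}a_{k_1,k_2}$ of order $<2j$, while the term with $\mu=0$ and $\alpha\ne j u_1$ picks out of $\partial^{2j}a_{k_1,k_2}$ a component not proportional to $u_1^{\otimes2j}$; by the displayed estimate both are $O(\eta^{k_1-2j+1})$, so the only surviving contribution is $\mu=0$, $\beta=0$, $\alpha=j u_1$. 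Computing it, with $\partial_1^{2j}a_{k_1,k_2}(iy_\omega u_1)=(-1)^{k_2}(-1)^j\frac{k_1!}{(k_1-2j)!}(\frac\pi2)^{n+k_1+k_2}\eta^{k_1-2j}+O(\eta^{k_1-2j+1})$ and $(i\theta'(y_\omega))^{-j}=(2i)^{-j}(1+O(\eta))$, gives $4^j L_{j,\psi_\omega}a_{k_1,k_2,\omega}=(-1)^{k_2}\frac{k_1!}{j!(k_1-2j)!}(\frac\pi2)^{n+k_1+k_2}\eta^{k_1-2j}+O(\eta^{k_1-2j+1})$, and replacing $\eta^{k_1-2j}$ by $2^{-(k_1-2j)}(\omega-\frac\pi2)^{k_1-2j}(1+O(\omega-\frac\pi2))$ produces exactly $b_{k_1,k_2,j}$ of~\eqref{bk1k2j}.

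The remaining, and most delicate, case is $k_1$ odd and $j=\frac{k_1+1}{2}$, where $2j=k_1+1>k_1$ and the leading order is $O(1)$. Here one cannot discard the $\mu=1$ term of~\eqref{Lj}: at $\omega=\frac\pi2$ the function $a_{k_1,k_2,\pi/2}$ vanishes to order exactly $k_1$ at $0$ (because $\cosh$ has a simple zero at $i\frac\pi2$), so $(\psi_{\pi/2}-P_{2,0}\psi_{\pi/2})a_{k_1,k_2,\pi/2}$ vanishes to order $k_1+3=2(j+1)$, matching the order of the operator $(\psi''_{\pi/2}(0)^{-1}\partial,\partial)^{j+1}$; hence the $\mu=1$ term contributes at order one, while $\mu\ge2$ is $O(\eta)$ by the same count. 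Thus $\lim_{\omega\to\frac\pi2}2^{k_1+1}L_{j,\psi_\omega}a_{k_1,k_2,\omega}$ is the sum of two pieces. The $\mu=0$ piece is $(\psi''_{\pi/2}(0)^{-1}\partial,\partial)^{j}$ applied to the degree-$(k_1+1)$ homogeneous Taylor part of $a_{k_1,k_2}$ at $i\frac\pi2 u_1$, which one computes — using $\cosh(i\frac\pi2+z)=i\sinh z$ and $\sqrt{(\lambda+i\frac\pi2 u_1)^2}=i\frac\pi2+\lambda_1+\frac i\pi(\lambda_1^2-\lambda^2)+O(|\lambda|^3)$ — to be proportional to $-\frac{2i(n+k_1+k_2)}{\pi}\lambda_1^{k_1+1}-\frac{ik_1}{\pi}\lambda_1^{k_1-1}\sum_{l\ge2}\lambda_l^2$; this yields the contributions $n+k_1+k_2$ and $m-1$. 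The $\mu=1$ piece is $(\psi''_{\pi/2}(0)^{-1}\partial,\partial)^{j+1}$ applied to (the degree-$3$ Taylor part of $\psi_{\pi/2}$ at $i\frac\pi2 u_1$) times (the degree-$k_1$ part $\propto\lambda_1^{k_1}$ of $a_{k_1,k_2}$); the former equals $\frac\pi6\lambda_1^3+\frac1\pi\lambda_1\sum_{l\ge2}\lambda_l^2$ — obtained from the third derivative of $w\mapsto iw\coth w$ at $i\frac\pi2$ (equivalently $\theta''(\frac\pi2)=\pi$) together with the cubic term $-\frac2{\pi^2}\lambda_1(\lambda_1^2-\lambda^2)$ of $\sqrt{(\lambda+i\frac\pi2 u_1)^2}$ — and this yields the remaining contributions $\frac{\pi^2}{24}(k_1+2)$ and $\frac12(m-1)$. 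Collecting the factorial and power-of-two constants and summing the two pieces reproduces $b_{k_1,k_2,(k_1+1)/2}$ of~\eqref{bk1k2odd}. The main obstacle is precisely this last computation: tracking the three separate origins of the bracket $n+k_1+k_2+\frac{\pi^2}{24}(k_1+2)+\frac32(m-1)$ and matching all the combinatorial factors, while not forgetting that for $\omega\to\frac\pi2$ the $\mu=1$ term of~\eqref{Lj} must be retained.
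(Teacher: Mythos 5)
Your proposal is correct and follows essentially the same route as the paper's own proof: you expand $a_{k_1,k_2,\pi/2}$ (equivalently $a_{k_1,k_2}$ near $i\frac\pi2 u_1$) to identify that it vanishes to order exactly $k_1$ at the stationary point, use the same order-counting in \eqref{Lj} to discard $\mu\ge1$ when $2j\le k_1$ and $\mu\ge2$ when $j=(k_1+1)/2$, and compute the surviving $\mu=0$ (resp.\ $\mu=0$ and $\mu=1$) terms via \eqref{psi2partialpartial}, the expansion of $\sqrt{(\lambda+i\frac\pi2 u_1)^2}$, and $\psi'''_{\pi/2}(0)$. The Taylor coefficients you record for $\sqrt{(\lambda+i\frac\pi2 u_1)^2}$, for the degree-$(k_1+1)$ part of $a_{k_1,k_2,\pi/2}$, and for the cubic part of $\psi_{\pi/2}$ all agree with the paper's \eqref{ak1k2pi2} and the displayed $\psi_{\pi/2}'''(0)$, and the decomposition of the bracket in $b_{k_1,k_2,(k_1+1)/2}$ into $(n+k_1+k_2)+(m-1)$ from $\mu=0$ and $\frac{\pi^2}{24}(k_1+2)+\frac12(m-1)$ from $\mu=1$ matches the paper's computation.
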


	\begin{proof}
By elementary computations,
	\begin{align}\label{ak1k2pi2}
		&a_{k_1,k_2,\pi/2}\left(\lambda\right)=(-1)^{k_1} i^{k_2-n}\left(i\frac{\pi}{2}\right)^{n+k_1+k_2} \lambda_1^{k_1}\nonumber\\ 
			&\quad+ (-1)^{k_1} i^{k_2-n}\left(i\frac{\pi}{2}\right)^{n+k_1+k_2-1}\left((n+k_1+k_2) \lambda_1^{k_1+1} +\frac{k_1}{2} \lambda_1^{k_1-1} (\lambda^2-\lambda_1^2)\right)+O\left(\abs{\lambda}^{k_1+2} \right).
	\end{align}
Therefore, since $a_{k_1,k_2,\pi/2}$ is analytic on its domain, we infer that,  for every $h=0,\dots, k_1$ we have
		\begin{equation}\label{ak1k2pi2bis}
		a_{k_1,k_2,\pi/2}^{(h)}(\lambda)=(-1)^{k_1} i^{k_2-n} \left(i\frac{\pi}{2}  \right)^{n+k_1+k_2} \frac{k_1!}{(k_1-h)!}  \lambda_1^{k_1-h} u_1^{\otimes h} +O\left(\abs{\lambda}^{k_1-h+1}  \right) 
		\end{equation}
	as $\lambda\to 0$.

\smallskip
	
Consider first $j$ such that $2j\leq k_1$. Then, arguing as in the proof of Lemma~\ref{lemmaHomega0} and taking into account~\eqref{ak1k2pi2bis} and the fact that
		\[
y_\omega-\frac{\pi}{2}= \frac{1}{2}\left(\omega-\frac{\pi}{2}\right)+ O\left[\left(\omega-\frac{\pi}{2}\right)^2\right]
		\]
		when $\omega \to \pi/2$, the first assertion follows.

\smallskip

		Let now $k_1$ be odd, so that $(k_1+1)/2$ is an integer. We shall prove that \[2^{k_1+1} L_{(k_1+1)/2, \psi_{\pi/2}}a_{k_1,k_2,\pi/2}=b_{k_1,k_2,(k_1+1)/2}.\] The estimate in the statement is then a consequence of this by Taylor expansion.
		
{Since $(\psi_{\pi/2}''(0)^{-1}\partial, \partial )^{\mu+(k_1+1)/2}$ is a differential operator of degree $2\mu +k_1+1$ while $ [ (\psi_\omega-P_{2,0}\psi_\omega)^\mu a_{k_1,k_2,\omega} ]$ is infinitesimal of degree $3\mu +k_1$ at $0$, the only terms in the sum~\eqref{Lj} (with $j=(k_1+1)/2$) which are not zero are clearly those for which} \[2\mu +k_1+1 \geq 3\mu+k_1,\]
namely $\mu \leq 1$. Consider first $\mu=0$. Then, since
$\theta'(y_{\pi/2})=2$, by~\eqref{psi2partialpartial}
		\[
		\begin{split}
		(\psi_{\pi/2}''(0)^{-1}\partial, \partial)^{(k_1+1)/2}a_{k_1,k_2,\pi/2}(0)&=i^{-(k_1+1)/2}\sum_{\abs{\alpha}=(k_1+1)/2} \frac{[(k_1+1)/2]! }{2^{\alpha_1}\alpha!} \partial^{2 \alpha} a_{k_1,k_2,\pi/2}(0).
		\end{split}
		\]
	Observe that, by~\eqref{ak1k2pi2}, $\partial^{2\alpha} a_{k_1,k_2,\pi/2}(0)\neq 0$ only if $\alpha=((k_1-1)/2)u_1+u_h$ for some $h=1,\dots, m$. {For the choice $h=1$},
		\[
		\partial^{k_1+1}_1 a_{k_1,k_2,\pi/2}(0)=(-1)^{k_1} i^{k_2-n} \left( i\frac{\pi}{2} \right)^{n+k_1+k_2-1} (k_1+1)! (n+k_1+k_2)
		\]
		while, for $h=2,\dots, m$,
		\[
		\partial^{k_1-1}_1 \partial_h^2 a_{k_1,k_2,\pi/2}(0)=(-1)^{k_1}{i^{k_2-n}  \left( i\frac{\pi}{2} \right)^{n+k_1+k_2-1} k_1!}
		\]
		so that
\begin{align*}
		(\psi_{\pi/2}''(0)^{-1}\partial, \partial)^{(k_1+1)/2}& a_{k_1,k_2,\pi/2}(0)\\&= (-1)^{k_1}\frac{i^{k_2-n-\frac{k_1+1}{2}}}{2^{\frac{k_1+1}{2}}}\left(i\frac{\pi}{2}\right)^{n+k_1+k_2-1}(k_1+1)! (n+k_1+k_2+m-1).
\end{align*}
Consider now $\mu=1$. Then by~\eqref{psi2partialpartial}
		\begin{multline*}
			(\psi_{\pi/2}''(0)^{-1}\partial, \partial)^{(k_1+3)/2}\left[ (\psi_{\pi/2}-P_{2,0} \psi_{\pi/2}) a_{k_1,k_2,\pi/2} \right](0)\\=i^{-(k_1+3)/2}\sum_{\abs{\alpha}=(k_1+3)/2} \frac{[(k_1+3)/2]! }{2^{\alpha_1}\alpha!} \partial^{2 \alpha} \left[(\psi_{\pi/2}-P_{2,0}\psi_{\pi/2})a_{k_1,k_2,\pi/2} \right](0).
\end{multline*}
Since
		\[
		\psi_{\pi/2}'''(0)= 
		 \pi u_1 \otimes u_1 \otimes u_1 + \frac{2}{\pi} \sum_{h=2}^m (u_1 \otimes u_h\otimes u_h+  u_h\otimes u_1 \otimes u_h + u_h\otimes u_h \otimes u_1 ),
		\]
we deduce that the only $\alpha$ for which we get a non-zero term in the above sum are $u_1(k_1+1)/2+ u_h$ for $h=1,\dots, m$.
		Now,
		\[
		\partial_1^{k_1+3} \left[ (\psi_{\pi/2}-P_{2,0} \psi_{\pi/2}) a_{k_1,k_2,\pi/2}\right](0)= \frac{(k_1+3)!}{ 3 !}  (-1)^{k_1} i^{k_2-n} \pi\left(i \frac{\pi}{2}\right)^{n+k_1+k_2}   ,
		\]
		while, for $h=2,\dots, m$, 
		\[
		\partial_1^{k_1+1} \partial_h^2 \left[ (\psi_{\pi/2}-P_{2,0} \psi_{\pi/2})a_{k_1,k_2,\pi/2} \right](0)=	\frac{2}{\pi}(-1)^{k_1}{i^{k_2-n} \left(i\frac{\pi}{2}\right)^{n+k_1+k_2}(k_1+1)!  		}.		
		\]
		Therefore,
\begin{align*}
		(\psi_{\pi/2}''(0)^{-1}&\partial, \partial)^{(k_1+3)/2}\left[ (\psi_{\pi/2}-P_{2,0} \psi_{\pi/2})a_{k_1,k_2,\pi/2} \right](0)\\&= (-1)^{k_1}i^{k_2-\frac{k_1+1}{2}}\frac{(k_1+1)!}{2^{(k_1+3)/2}}i^{-n}	 \left(i \frac{\pi}{2}\right)^{n+k_1+k_2-1}(k_1+3) \left[ \frac{\pi^2}{12}(k_1+2) + m-1	\right]	
\end{align*}
		from which one gets the asserted estimate. 
	 \end{proof}
Theorem~\ref{HstimeI} is now completely proved. In the following table we summarize the asymptotic behaviour, without remainders, of $\Upsilon(x,t)$. 

\begin{minipage}{\textwidth}
\begin{center}
\footnotesize{
\captionof*{table}{Asymptotic behaviour of $\Upsilon(x,t)$ in case \textbf{I}: principal part}	
$\begin{array}{*{5}{|c}}

\hline

\begin{array}{c}\varepsilon\leq \omega \leq \pi/2-\varepsilon \\ \text{or} \\ \pi/2 +\varepsilon\leq \omega\leq C\end{array} & \multicolumn{3}{c|}{ \displaystyle (-1)^{k_1+k_2}  \frac{ y_\omega^{n+k_1+k_2}\cos(y_\omega)^{k_1}}{\sin (y_\omega)^{n+k_1}}}  \\ 
   
    \hline
\multirow{3}{*}{\begin{tabular}[c]{@{}c@{}}\\ \; \\ \; \\  $\omega \to 0$\\ \end{tabular}}               &\multicolumn{2}{c|}{\begin{array}{c} \\ k_2  \text{ even}\\ \; \end{array} }
& \displaystyle \sum_{j=0}^{k_2/2}c_{k_1,k_2,j} \frac{\omega^{k_2-2j}}{|x|^{2j}} \\ \cline{2-4}
&
 
\multirow{2}{*}{\begin{tabular}[c]{@{}c@{}}\\  \; \parbox{0.09\textwidth}{\begin{center}$k_2$ odd \;\end{center}} \\ \end{tabular}}                             & \begin{array}{c} \\ |t| \to \infty\\ \; \end{array}  & \displaystyle\sum_{j=0}^{(k_2-1)/2} c_{k_1,k_2,j}\frac{ \omega^{k_2-2j}}{|x|^{2j}}\\ \cline{3-4} 
                                              &                                                        & \begin{array}{c} \\ 0\leq |t|\leq C\\ \; \end{array}                             & \displaystyle c_{k_1,k_2+1, (k_2+1)/2}\frac{|t|}{|x|^{k_2+1}} \\ \hline

\multirow{2}{*}{\begin{tabular}[c]{@{}c@{}}\\ \parbox{0.2\textwidth}{\begin{center}$\omega \to \frac{\pi}{2}$\end{center}}\\ \end{tabular}}              
& \multicolumn{2}{c|}{\begin{array}{c} \\ k_1 \;\text{even} \\ \; \end{array} }  & \displaystyle \sum_{j=0}^{k_1/2} b_{k_1,k_2,j} \frac{\left(\omega -\frac{\pi}{2}\right)^{k_1-2j}}{|x|^{2j}} \\ \cline{2-4} 

 & \multicolumn{2}{c|}{\begin{array}{c} \\ k_1 \; \text{odd} \\ \; \end{array} }  & \; \quad  \displaystyle \sum_{j=0}^{(k_1-1)/2} b_{k_1,k_2,j} \frac{\left(\omega -\frac{\pi}{2}\right)^{k_1-2j}}{|x|^{2j}} + \frac{b_{k_1,k_2,(k_1+1)/2}}{|x|^{k_1+1}}\quad \;\\ \hline
 
\end{array}
$}
\end{center}
\end{minipage}
	\subsection*{The Other Cases}
	We now consider the case $\omega \to +\infty$. We begin by showing that, when $m$ is odd, matters can be reduced to the case $m=1$.
	\begin{lemma}\label{dispariesattolemma}
		When $m$ is odd, $m\geq 3$,
		\begin{equation}\label{dispariesatto}
			\begin{split} p_{1,k_1,k_2}^{(m)}(x,t)= \sum_{k=1}^{\frac{m-1}{2}}\frac{c_{m,k}(-1)^k}{(2\pi)^{\frac{m-1}{2}}} \sum_{r=0}^{k_2} \binom{k_2}{r} \frac{(-1)^r (m-1-k)_r}{|t|^{m-1-k+r}}p_{1,k_1,k_2+k-r}^{(1)}(x,|t|), \end{split}
		\end{equation}
		where \[c_{m,k}= \frac{(m-k-2)!}{2^{\frac{m-1}{2}-k}\left( {\frac{m-1}{2}} -k\right)! (k-1)!  }\]
		and $(m-1-k)_r = (m-1-k) \cdots (m-1-k+r-1)$ is the Pochhammer symbol\footnote{See, e.g.,~\cite{Erdelyi2}.}.
	\end{lemma}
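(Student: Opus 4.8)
The plan is to reduce to the case $k_2=0$ by repeated differentiation in $\abs{t}$, and then to prove that case by realizing $p^{(m)}_{1,k_1,0}$ as the (profile of the) Fourier transform of a radial function on $\R^m$ and invoking the classical dimension–raising relation for such transforms.

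For the reduction, recall that $p_{1,k_1,k_2}=\partial_R^{k_1}\partial_{\abs{t}}^{k_2}p_1$, so that, viewing the kernels as smooth functions of $\abs{t}$ (for $t\neq0$), one has $p^{(m)}_{1,k_1,k_2}=\partial_{\abs{t}}^{k_2}p^{(m)}_{1,k_1,0}$ and $p^{(1)}_{1,k_1,k+\ell}=\partial_{\abs{t}}^{\ell}p^{(1)}_{1,k_1,k}$. Hence, once the identity
\[
p^{(m)}_{1,k_1,0}(x,t)=\sum_{k=1}^{(m-1)/2}\frac{c_{m,k}(-1)^k}{(2\pi)^{(m-1)/2}}\,\frac{1}{\abs{t}^{\,m-1-k}}\,p^{(1)}_{1,k_1,k}(x,\abs{t})
\]
is established, applying $\partial_{\abs{t}}^{k_2}$ and the Leibniz rule, together with $\partial_{\abs{t}}^{r}\abs{t}^{-(m-1-k)}=(-1)^r(m-1-k)_r\,\abs{t}^{-(m-1-k)-r}$ and $\partial_{\abs{t}}^{k_2-r}p^{(1)}_{1,k_1,k}=p^{(1)}_{1,k_1,k_2+k-r}$, reproduces~\eqref{dispariesatto} exactly. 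So everything reduces to the displayed identity.

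To prove it, put $\mathcal{G}_d(\rho):=\int_{\R^d}e^{i\rho(\lambda,u_1)}\,\dfrac{\abs{\lambda}^{n+k_1}\cosh(\abs{\lambda})^{k_1}}{\sinh(\abs{\lambda})^{n+k_1}}\,e^{-\frac{\abs{x}^2}{4}\abs{\lambda}\coth\abs{\lambda}}\,\dd\lambda$ for $d\in\N^*$, so that $\mathcal{G}_d$ is the Fourier transform of a radial function on $\R^d$ whose profile is smooth, even, and decays exponentially fast together with all its derivatives; in particular $\mathcal{G}_d$ is a smooth even function of $\rho$. From~\eqref{pk1k2} (with $m=1$ and $s=1$) one reads off, using $\lambda^k e^{i\rho\lambda}=(-i)^k\partial_\rho^k e^{i\rho\lambda}$, that
\[
p^{(1)}_{1,k_1,k}(x,\abs{t})=\frac{(-1)^{k_1}}{(4\pi)^n(2\pi)}\,\mathcal{G}_1^{(k)}(\abs{t}),\qquad p^{(m)}_{1,k_1,0}(x,t)=\frac{(-1)^{k_1}}{(4\pi)^n(2\pi)^m}\,\mathcal{G}_m(\abs{t}).
\]
The main ingredient is the dimension–raising identity $\mathcal{G}_{d+2}(\rho)=-\frac{2\pi}{\rho}\,\mathcal{G}_d'(\rho)$ for Fourier transforms of radial functions, which follows from the Bessel recursion $\frac{d}{dz}[z^{-\nu}J_\nu(z)]=-z^{-\nu}J_{\nu+1}(z)$ (or directly from Fubini and one integration by parts in the variable $(\lambda,u_1)$, the boundary terms vanishing by the exponential decay). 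Iterating it $\ell:=(m-1)/2$ times gives $\mathcal{G}_m(\rho)=(-2\pi)^{\ell}\big(\frac1\rho\frac{d}{d\rho}\big)^{\ell}\mathcal{G}_1(\rho)$, and expanding the operator by the classical identity
\[
\Big(\frac1\rho\frac{d}{d\rho}\Big)^{\ell}=\sum_{k=1}^{\ell}\frac{(-1)^{\ell-k}(2\ell-k-1)!}{2^{\ell-k}(\ell-k)!\,(k-1)!}\,\rho^{\,k-2\ell}\frac{d^k}{d\rho^k}
\]
—proved by a one-line induction on $\ell$, the coefficients satisfying $c_{\ell+1,k}=(k-2\ell)c_{\ell,k}+c_{\ell,k-1}$—yields $\mathcal{G}_m(\rho)=(2\pi)^{\ell}\sum_{k=1}^{\ell}\frac{(-1)^{k}(2\ell-k-1)!}{2^{\ell-k}(\ell-k)!\,(k-1)!}\,\rho^{-(2\ell-k)}\,\mathcal{G}_1^{(k)}(\rho)$ (using $(-1)^{2\ell-k}=(-1)^k$). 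Feeding back the expressions for $p^{(1)}_{1,k_1,k}$ and $p^{(m)}_{1,k_1,0}$ just obtained, using $2\ell-k=m-1-k$ and $\frac{(2\pi)^{\ell}(2\pi)}{(2\pi)^m}=(2\pi)^{-\ell}$, one recovers precisely the $k_2=0$ identity with $c_{m,k}=\frac{(2\ell-k-1)!}{2^{\ell-k}(\ell-k)!\,(k-1)!}=\frac{(m-k-2)!}{2^{(m-1)/2-k}((m-1)/2-k)!\,(k-1)!}$.

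Nothing here is conceptually hard; the work is in the bookkeeping of the multiplicative constants and in a few analytic justifications: differentiation under the integral sign, the exponential decay of the radial profile and of all its derivatives (which makes every $\mathcal{G}_d$ smooth and even, so that the operators $\frac1\rho\frac{d}{d\rho}$ are legitimately iterated and the boundary terms in the integrations by parts all vanish), and the dimension–raising relation itself. Finally, observe that the apparent poles $\abs{t}^{-(m-1-k)}$ in the $k_2=0$ identity are spurious, since $\mathcal{G}_1$ is even and $\big(\frac1\rho\frac{d}{d\rho}\big)^{\ell}\mathcal{G}_1$ is smooth; this is consistent with the smoothness of $p^{(m)}_{1,k_1,0}$, the formula being understood for $t\neq0$.
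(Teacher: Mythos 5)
Your proof is correct, and it takes a somewhat different route from the paper's. The paper passes to polar coordinates in the defining integral for $p_{1,k_1,0}^{(m)}$ and then quotes Eldredge's explicit closed form for the spherical integral $\int_{S^{m-1}}e^{i\rho|t|(\sigma,u_1)}\,\dd\sigma$ (valid because the spherical Bessel function is elementary in odd dimension), which yields the $k_2=0$ identity in one step; the $k_2>0$ case is then obtained, as you do, by applying $\partial_{|t|}^{k_2}$ with Leibniz. You instead avoid quoting the spherical integral formula: you set up the radial Fourier transform profiles $\mathcal{G}_d$, prove the dimension-raising relation $\mathcal{G}_{d+2}(\rho)=-\frac{2\pi}{\rho}\mathcal{G}_d'(\rho)$ directly from the Bessel recursion (or an integration by parts in the first coordinate of $\lambda$), iterate it $(m-1)/2$ times, and then expand $\big(\frac1\rho\frac{d}{d\rho}\big)^\ell$ via a combinatorial identity whose coefficient recursion you verify. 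This is more self-contained (you derive what the paper cites) at the cost of an extra inductive lemma, and of course both proofs hinge on the same underlying fact about half-integer-order Bessel functions. I checked your normalizations: the relation $p_{1,k_1,k}^{(1)}(x,|t|)=\frac{(-1)^{k_1}}{(4\pi)^n(2\pi)}\mathcal{G}_1^{(k)}(|t|)$ matches the Heisenberg formula (the absolute values in the profile disappear by parity), the power counting $(2\pi)^{\ell+1}/(2\pi)^m=(2\pi)^{-(m-1)/2}$ and the identification $c_{m,k}=\frac{(2\ell-k-1)!}{2^{\ell-k}(\ell-k)!(k-1)!}$ with $\ell=(m-1)/2$ all check out, and the recursion $A_{\ell+1,k}=(k-2\ell)A_{\ell,k}+A_{\ell,k-1}$ indeed holds for the stated coefficients. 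The Leibniz step and the Pochhammer bookkeeping reproducing~\eqref{dispariesatto} are also correct.
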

	\begin{proof}
		
		Let $m$ be odd, $m\geq 3$. We first pass to polar coordinates in \eqref{pk1k2} for $k_2=0$, and get
		\[
		p_{1,k_1,0}^{(m)}(x,t)= \frac{(-1)^{\frac{m-1}{2}}}{(2\pi)^m (4\pi)^n}\int_{0}^\infty \int_{S^{m-1}} e^{i\rho\abs{t}( \sigma, u_1)}\,\dd \sigma \, e^{-R\rho \coth(\rho)}a_{k_1,m-1}(\rho)\,\dd \rho
		\]
		where $\dd \sigma$ is the $(m-1)$-dimensional (Hausdorff) measure on $S^{m-1}$ and $a_{k_1,m-1}$ is the function defined in~\eqref{ak1k2}.
		Since the Bessel function is an elementary function when $m$ is odd, one can prove that (see e.g.~\cite[equation (6.5)]{Eldredge} and references therein)\footnote{This is why we had to restrict to the case $k_2=0$;  otherwise, we would get the additional term $( \sigma, u_1)^{k_2}$ in the integral on the sphere.
		}
		\begin{equation*}\label{Eldredge:elem}
			\int_{S^{m-1}} e^{i\rho\abs{t}( \sigma, u_1)}\,\dd \sigma= 2(2\pi)^{\frac{m-1}{2}}\Re \frac{e^{i\rho\abs{t} }}{(\rho\abs{t})^{m-1}}\sum_{k=1}^{\frac{m-1}{2}} c_{m,k}(-i\abs{t} \rho)^k.
		\end{equation*}
		This yields
		\[
		\begin{split}
		p^{(m)}_{1,k_1,0}(x,t)&= \sum_{k=1}^{\frac{m-1}{2}}\frac{c_{m,k}(-1)^k}{(2\pi)^{\frac{m-1}{2}}} \frac{1}{|t|^{m-1-k}} p^{(1)}_{1,k_1,k}(x,|t|)
		\end{split}
		\]
		which gives \eqref{dispariesatto}, since $p_{1,k_1,k_2}^{(m)}(x,t)= \frac{\partial^{k_2}}{\partial |t|^{k_2}}p_{1,k_1,0}^{(m)}(x,t)$ by definition.
	 \end{proof}
	
	\begin{corollary} \label{stime234dispari}
		{Let $m$ be odd. Then, when $(x,t)\to \infty$ and $\delta\to 0^+$}
		\begin{equation}\label{cor:odd}
			\begin{split} p_{1,k_1,k_2}^{(m)}(x,t)= \frac{(-1)^{k_2} \pi^{k_1+k_2}}{2^{n-k_1+1 + \frac{m-1}{2}}}|t|^{n+k_1-1-\frac{m-1}{2}}e^{-\frac{1}{4}d(x,t)^2 } e^{-\kappa \rho(\delta)} \tilde I_{n+k_1-1}(\kappa \rho(\delta))\left[1+ g(\abs{x},|t|) \right],\end{split}
		\end{equation}
		where $g$ satisfies the estimates~\eqref{remainders}.
	\end{corollary}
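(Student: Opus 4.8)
The plan is to deduce Corollary~\ref{stime234dispari} by substituting the $m=1$ asymptotics of Corollary~\ref{cor:1} into the exact identity of Lemma~\ref{dispariesattolemma}. When $m=1$ there is nothing to prove: the claimed formula is precisely Corollary~\ref{cor:1}, since then $n+k_1-1-\frac{m-1}{2}=n+k_1-1$ and $2^{\,n-k_1+1+\frac{m-1}{2}}=2^{\,n-k_1+1}$. So assume $m\geq 3$ odd. By Lemma~\ref{dispariesattolemma}, $p^{(m)}_{1,k_1,k_2}(x,t)$ is a finite linear combination, with constant coefficients, of the terms
\[
\frac{1}{|t|^{m-1-k+r}}\, p^{(1)}_{1,k_1,k_2+k-r}(x,|t|),\qquad 1\leq k\leq \frac{m-1}{2},\ 0\leq r\leq k_2 .
\]

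First I would observe that the hypotheses $(x,t)\to\infty$ and $\delta\to 0^+$ force $|t|\to\infty$: indeed $\delta^{2}=R/(\pi|t|)\to 0$, so if $R$ stays bounded then $|t|\to\infty$, while if $R\to\infty$ then $|t|\gg R\to\infty$ as well. Hence $(x,|t|)\to\infty$ in $\Hei^n$, and the quantities $\omega,\delta,\kappa,\rho(\delta)$ and $d$ attached to $(x,|t|)$ coincide with those attached to $(x,t)$, as they depend only on $|x|$ and $|t|$; moreover $(x,|t|)$ lies in the same regime among \textbf{II}, \textbf{III}, \textbf{IV} as $(x,t)$, since this regime is determined by $\delta$ and $\kappa$ alone. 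Therefore Corollary~\ref{cor:1} applies to each $p^{(1)}_{1,k_1,k_2+k-r}(x,|t|)$ with one and the same error $g(|x|,|t|)$ satisfying~\eqref{remainders}, and with the same Bessel index $n+k_1-1$ (which does not depend on the second index).

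Substituting, the $(k,r)$-summand becomes a bounded constant times
\[
|t|^{\,n+k_1-m+k-r}\, e^{-\frac14 d(x,t)^2}\, e^{-\kappa\rho(\delta)}\,\tilde I_{n+k_1-1}\big(\kappa\rho(\delta)\big)\,[1+g(|x|,|t|)].
\]
The exponent $n+k_1-m+k-r$ is maximised exactly at $k=\frac{m-1}{2}$, $r=0$, where it equals $n+k_1-1-\frac{m-1}{2}$; every other summand carries at least one extra factor $|t|^{-1}$. A short computation using $|t|=\kappa/(2\pi\delta)$ shows that $|t|^{-1}=O(g(|x|,|t|))$ in each of the three regimes of~\eqref{remainders}: for instance $|t|^{-1}=2\pi\delta/\kappa=O(\delta)$ when $\kappa$ is bounded away from $0$, and $|t|^{-1}\le 1/|t|=O(g)$ when $\kappa\to 0^+$. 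Consequently every non-leading summand is of the form (leading summand)$\,\cdot O(g)$ and can be absorbed into the factor $[1+g]$.

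It then only remains to compute the leading coefficient, namely the product of the prefactor of Lemma~\ref{dispariesattolemma} at $k=\frac{m-1}{2}$, $r=0$ — for which $c_{m,(m-1)/2}=1$ — by the prefactor of Corollary~\ref{cor:1} with second index $k_2+\frac{m-1}{2}$. Collecting signs one finds $(-1)^{(m-1)/2}(-1)^{k_2+(m-1)/2}=(-1)^{m-1}(-1)^{k_2}=(-1)^{k_2}$ since $m$ is odd, and collecting powers of $\pi$ and $2$ yields the factors $\pi^{k_1+k_2}$ and $2^{-(n-k_1+1)-(m-1)/2}$; this gives precisely the constant $\frac{(-1)^{k_2}\pi^{k_1+k_2}}{2^{\,n-k_1+1+\frac{m-1}{2}}}$ in the statement. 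The main point requiring genuine care is the uniformity in the reduction step: one must verify that, \emph{within} each regime \textbf{II}--\textbf{IV}, the estimate $|t|^{-1}=O(g)$ holds uniformly, so that the remainders produced by the various summands (and by Corollary~\ref{cor:1} itself) all fold into the single error $g(|x|,|t|)$; this is exactly what makes the three cases of~\eqref{remainders} transfer unchanged to Corollary~\ref{stime234dispari}.
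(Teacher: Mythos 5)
Your proposal is correct and follows essentially the same route as the paper's proof: reduce to the $m=1$ case via Lemma~\ref{dispariesattolemma}, observe that (since the $p^{(1)}_{1,k_1,\cdot}$ with varying second index are all comparable by Corollary~\ref{cor:1}) the leading summand is $k=\frac{m-1}{2}$, $r=0$, substitute Corollary~\ref{cor:1} into that summand, and absorb the rest into the error by checking $1/|t|=O(g)$ in each of the three regimes. Your coefficient computation and the check that $c_{m,(m-1)/2}=1$ match the paper; you merely spell out more explicitly the observation that $\delta\to 0^+$ forces $|t|\to\infty$ and that the radial quantities attached to $(x,|t|)\in\Hei^n$ agree with those attached to $(x,t)$, which the paper leaves implicit.
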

	\begin{proof}
		If $m=1$, the statement reduces to Corollary~\ref{cor:1}. Suppose then $m\geq 3$. Since $p^{(1)}_{1,k_1,r}\asymp p^{(1)}_{1,k_1,k_2}$ for every $0\leq r\leq k_2$ by Corollary~\ref{cor:1}, the principal term in~\eqref{dispariesatto} corresponds to $r=0$, $k=\frac{m-1}{2}$. Hence
		\begin{equation}\label{eqdispariprinc}
			p_{1,k_1,k_2}^{(m)}(x,t)=\frac{(-1)^{\frac{m-1}{2}}}{(2\pi)^{\frac{m-1}{2}}} \abs{t}^{-\frac{m-1}{2}}p_{1,k_1,k_2+\frac{m-1}{2}}^{(1)}(x,t)\left[1+O\left(\frac{1}{\abs{t}}\right)\right].
		\end{equation}
		Now substitute the estimate given by Corollary~\ref{cor:1} into~\eqref{eqdispariprinc}. The remainder $g$ in~\eqref{cor:odd} still satisfies~\eqref{remainders}, since~\eqref{remainders} is satisfied by $1/\abs{t}$.
	 \end{proof}
Let now $m$ be even, $m\geq 2$. We start by a descent method, in the same spirit of~\cite{Eldredge}: indeed, observe that the Fourier inversion formula yields
	\[
	p^{(m)}_{1,k_1,0}(x,t)=\int_\R p^{(m+1)}_{1,k_1,0}(x,(t,t_{m+1}))\,\dd t_{m+1},
	\]
	so that, by differentiating under the integral sign,
	\[
	p^{(m)}_{1,k_1,k_2}(x,t)=\int_\R \frac{\partial^{k_2}}{\partial \abs{t}^{k_2}}p^{(m+1)}_{1,k_1,0}(x,(t,t_{m+1}))\,\dd t_{m+1}.
	\]
	Observe that $\abs{(t,t_{m+1})} = \abs{t}\sqrt{1+ \frac{t_{m+1}^2}{\abs{t}^2}}$. Therefore, if we define $\mathfrak{I}^{k_2}\coloneqq\{h\in \N^{k_2}\colon \sum_{j=1}^{k_2} j h_j=k_2\}$, Faà di Bruno's formula applied twice\footnote{Applied once, it yields
		\[
		\frac{\partial^{k_2}}{\partial \abs{t}^{k_2}}p^{(m+1)}_{1,k_1,0}(x,(t,t_{m+1}))=\sum_{h\in \mathfrak{I}^{k_2}}\frac{k_2 !}{h !} p_{1,k_1,\abs{h}}^{(m+1)}(x,(t,t_{m+1})) \prod_{j=1}^{k_2} \left(  \frac{1}{j !}\frac{\partial^{j}}{\partial \abs{t}^j}\sqrt{\abs{t}^2+t_{m+1}^2}  \right) ^{h_j},
		\]
		and then
		\[
		\frac{\partial^{j}}{\partial \abs{t}^j}\sqrt{\abs{t}^2+t_{m+1}^2}= \sum_{ \ell_1+2\ell_2=j} \frac{j !}{\ell !} {(-1)^{\abs{\ell}}} \left({-\frac{1}{2}}\right)_{\abs{\ell}} (\abs{t}^2+t_{m+1}^2)^{\frac{1}{2}-\abs{\ell}} (2\abs{t})^{\ell_1} .
		\]
	} leads to
	
	\begin{equation*} 
		p_{1,k_1,k_2}^{(m)}(x,t) = \sum_{h\in \mathfrak{I}^{k_2}} \frac{k_2!}{h!} \int_\R p_{1,k_1,|h|}^{(m+1)}(x,(t,t_{m+1}))\,F_{h}(t,t_{m+1})\,\dd t_{m+1}
	\end{equation*}	
	where
	\[ \begin{split} F_{h}(t,t_{m+1})&= \prod_{j=1}^{k_2} \left( \sum_{\ell_1 +2\ell_2 = j} \frac{2^{\ell_1}}{\ell!} {(-1)^{\abs{\ell}}} \left({-\frac{1}{2}}\right)_{|\ell|}|t|^{1-j}\left(1+\frac{t_{m+1}^2}{|t|^2}\right)^{\frac{1}{2} -|\ell|}\right)^{h_j}.
	\end{split} \]	
	{Since $F_{(k_2,0,\dots,0)}= \left(1+\frac{t_{m+1}^2}{|t|^2}\right)^{-k_2/2} $ while $F_h=O\left(\frac{1}{|t|}\left(1+\frac{t_{m+1}^2}{|t|^2}\right)^{-1/2}\right)$ otherwise, we have proved the following lemma.}
	\begin{lemma}\label{lemmaalpha} When $m$ is even, $m\geq 2$, 
		\begin{multline*}
			p_{1,k_1,k_2}^{(m)}(x,t) =\int_\R \left(1+\frac{t_{m+1}^2}{|t|^2}\right)^{-\frac{k_2}{2}} p_{1,k_1,k_2}^{(m+1)}(x,(t,t_{m+1})) \,\dd t_{m+1} \\ +  O \left[\frac{1}{|t|}\max_{0\leq r< k_2}\int_\R \left(1+\frac{t_{m+1}^2}{\abs{t}^2}\right)^{-\frac{1}{2}} p_{1,k_1,r}^{(m+1)}(x,(t,t_{m+1})) \,\dd t_{m+1}\right]. \end{multline*}
	\end{lemma}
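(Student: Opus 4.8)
The plan is to push the descent set up just above one step further: start from the Fourier inversion formula in the last central variable,
\[
p^{(m)}_{1,k_1,0}(x,t)=\int_\R p^{(m+1)}_{1,k_1,0}(x,(t,t_{m+1}))\,\dd t_{m+1},
\]
which one reads off directly from~\eqref{HeatKernel} (the slice $\lambda_{m+1}=0$ of the integrand on $\R^{m+1}$ is precisely the integrand on $\R^{m}$, since $\abs{\lambda}$ and $(\lambda,t)$ are unaffected). Since the amplitude in~\eqref{HeatKernel} is rapidly decreasing in $\lambda$, all the kernels $p^{(m+1)}_{1,k_1,j}$ decay rapidly in $(t,t_{m+1})$, uniformly in a way that legitimizes differentiating $k_2$ times in $\abs{t}$ under the integral sign; this gives $p^{(m)}_{1,k_1,k_2}(x,t)$ as the integral over $t_{m+1}$ of $\partial^{k_2}_{\abs{t}}\,p^{(m+1)}_{1,k_1,0}(x,(t,t_{m+1}))$.

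Next, since $p^{(m+1)}_{1,k_1,0}(x,\cdot)$ depends on its central argument only through $\abs{(t,t_{m+1})}=\sqrt{\abs{t}^2+t_{m+1}^2}$, and since by Definition~\ref{defp1k1k2} differentiating $j$ times in the radial variable turns $p^{(m+1)}_{1,k_1,0}$ into $p^{(m+1)}_{1,k_1,j}$, I would apply Faà di Bruno's formula to the composition $\abs{t}\mapsto\sqrt{\abs{t}^2+t_{m+1}^2}\mapsto p^{(m+1)}_{1,k_1,0}$, and once more (or via the generalised binomial series) to evaluate the inner derivatives $\partial^{j}_{\abs{t}}\sqrt{\abs{t}^2+t_{m+1}^2}$. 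Combining the two expansions yields exactly
\[
p^{(m)}_{1,k_1,k_2}(x,t)=\sum_{h\in\mathfrak{I}^{k_2}}\frac{k_2!}{h!}\int_\R p^{(m+1)}_{1,k_1,\abs{h}}(x,(t,t_{m+1}))\,F_h(t,t_{m+1})\,\dd t_{m+1},
\]
with $F_h$ the function displayed above.

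It then remains to single out the dominant summand. For $h=(k_2,0,\dots,0)$ one has $\abs{h}=k_2$ and $k_2!/h!=1$, and the only surviving factor (the one with $j=1$) equals $\big(\partial_{\abs{t}}\sqrt{\abs{t}^2+t_{m+1}^2}\big)^{k_2}=(1+t_{m+1}^2/\abs{t}^2)^{-k_2/2}$, which is the asserted main term. For every other $h\in\mathfrak{I}^{k_2}$ the constraint $\sum_{j}jh_j=k_2$ forces $\abs{h}=\sum_{j}h_j<k_2$ (equality would require $h_j=0$ for all $j\geq2$), so the powers of $\abs{t}$ pulled out of $F_h$ multiply to $\abs{t}^{\abs{h}-k_2}=O(\abs{t}^{-1})$; bounding each remaining factor $(1+t_{m+1}^2/\abs{t}^2)^{1/2-\abs{\ell}}$ by $(1+t_{m+1}^2/\abs{t}^2)^{-1/2}\leq1$ (note $\abs{\ell}\geq1$ whenever $\ell_1+2\ell_2=j\geq1$) and retaining one such factor gives $F_h=O\big(\abs{t}^{-1}(1+t_{m+1}^2/\abs{t}^2)^{-1/2}\big)$ for $h\neq(k_2,0,\dots,0)$. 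Summing over these $h$, moving absolute values inside the integrals, and observing that $\abs{h}$ then ranges over $\{0,\dots,k_2-1\}$ produces the stated remainder. I expect the only genuinely delicate point to be this last bookkeeping --- arranging the exponents of $1+t_{m+1}^2/\abs{t}^2$ and the power of $\abs{t}$ so that the error is truly of lower order than the main term --- together with the routine-but-necessary justification of differentiation under the integral sign; everything else is a direct composition of the two Faà di Bruno expansions.
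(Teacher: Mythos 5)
Your proposal follows exactly the paper's own argument: Fourier inversion in the last central variable, differentiation under the integral sign, the double application of Faà di Bruno's formula producing the sum over $h\in\mathfrak{I}^{k_2}$ with the functions $F_h$, and then isolating the index $h=(k_2,0,\dots,0)$ as the main term while bounding $F_h$ for $h\neq(k_2,0,\dots,0)$ by $O\bigl(|t|^{-1}(1+t_{m+1}^2/|t|^2)^{-1/2}\bigr)$ via $|h|<k_2$ and the fact that every surviving factor has $|\ell|\geq 1$. The only differences are cosmetic — you spell out a justification for the Fourier inversion slice and for differentiation under the integral, which the paper takes for granted — so the proof is correct and not a different route.
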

	As a consequence of Lemma \ref{lemmaalpha}, {matters can be reduced to finding} the asymptotic expansions of the integrals
	\begin{equation}\label{alphar} \int_\R  \left(1+\frac{t_{m+1}^2}{\abs{t}^2}\right)^{\alpha} p_{1,k_1,r}^{(m+1)}(x,(t,t_{m+1}))\,d t_{m+1}
	\end{equation}
	when $\alpha \in \R$ and $0\leq r\leq k_2$. From these, it will also be proved that the remainder in Lemma \ref{lemmaalpha} is indeed smaller than the principal part, which \emph{a priori} is not obvious.
	
	\smallskip
	
	With this aim, we define the function $\sigma\colon \R\ni s \mapsto \sqrt{1+s^2}$, and write $t'=(t,t_{m+1}) \in \R^{m+1}$. It is straightforward to check that $\abs{t'}=\abs{t}\sigma\left(\frac{t_{m+1}}{\abs{t}}\right)$. Thus, define 
	\[\delta(s) \coloneqq \frac{\delta}{\sqrt{\sigma(s)}},\qquad \kappa(s)\coloneqq \kappa \sqrt{\sigma(s)} = 2\pi|t|\delta\sqrt{\sigma(s)}.\]
	Obviously, $\delta(0)=\delta$ and $\kappa(0)=\kappa$. If we put a prime on the quantities introduced in Definition~\ref{Romegadeltakappa} relative to $t'$, moreover,
	\begin{gather*}
		\delta'=\delta\left(\frac{t_{m+1}}{|t|}\right), \qquad
		\kappa'=\kappa\left(\frac{t_{m+1}}{\abs{t}}\right).
	\end{gather*}
	In cases {\bf II}, {\bf III} and {\bf IV}, $|t| \to \infty$ and $\delta \to 0^+$.  By substituting~\eqref{cor:odd} into~\eqref{alphar} and by the change of variable $\frac{t_{m+1}}{|t|} \mapsto s$ in the integral
	\begin{equation*}  \eqref{alphar}=\frac{(-1)^{r} \pi^{r+k_1}}{2^{n-k_1+1 + \frac{m}{2}}}|t|^{n+k_1-1 -\frac{m}{2}+1} e^{-\frac{1}{4}d(x,t)^2 } e^{-\kappa \rho(\delta)} \mathscr{I}_{2\alpha +n+k_1-1 -\frac{m}{2}}, 
	\end{equation*}
	where
	\begin{equation}\label{Ibeta}
		\mathscr{I}_{\beta}= \int_{\R}\sigma(s)^{\beta} e^{-|t|\pi(\sigma(s) -1)}\tilde I_{n+k_1-1}\left(\kappa(s)\rho\left(\delta(s)\right) \right)\left[ 1+g(\abs{x},|t|\sigma(s))\right]\,\dd s,
	\end{equation}
and $g$ satisfies the estimates~\eqref{remainders}. Therefore, matters can be reduced to finding some asymptotic estimates of the integrals $\mathscr{I}_{\beta}$.
	
	\subsection{\textbf{II.} Estimates for $\delta \rightarrow 0^+$ and $\kappa \rightarrow +\infty$.} 
	\begin{theorem}\label{HstimeII} For $\delta\to 0^+$ and $\kappa\to +\infty$
		\[
		\begin{split} 
		p_{1,k_1,k_2}(x,t)= \frac{(-1)^{k_2} \pi^{k_1+k_2}}{ 4^n (\pi\delta)^{n+k_1-\frac{m+1}{2}} \sqrt{2 \pi\kappa^m} }e^{-\frac{1}{4}d(x,t)^2}\left[1+O\left(\delta+\frac{1}{\kappa} \right) \right].
		\end{split}
		\]	
	\end{theorem}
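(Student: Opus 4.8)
The plan is to distinguish the parities of $m$ and reduce both cases to the one‑dimensional estimate of Theorem~\ref{stimeII}, using the reductions already set up in this section, after which only a scalar asymptotic analysis remains.

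\emph{The case $m$ odd.} If $m=1$ there is nothing to prove, this being Theorem~\ref{stimeII}. If $m\ge 3$, Corollary~\ref{stime234dispari} yields~\eqref{cor:odd}, i.e.\ $p^{(m)}_{1,k_1,k_2}(x,t)$ equals a constant times $\abs{t}^{n+k_1-1-\frac{m-1}{2}}e^{-\frac14 d(x,t)^2}e^{-\kappa\rho(\delta)}\tilde I_{n+k_1-1}(\kappa\rho(\delta))\,[1+g(\abs{x},\abs{t})]$ with $g=O(\delta+1/\kappa)$ in case~\textbf{II}. Since $\rho(\delta)=1+O(\delta^2)$ one has $\kappa\rho(\delta)\to+\infty$, so I would invoke~\eqref{modibess} to replace $e^{-\kappa\rho(\delta)}\tilde I_{n+k_1-1}(\kappa\rho(\delta))$ by $(\kappa\rho(\delta))^{-(n+k_1-1)}(2\pi\kappa\rho(\delta))^{-1/2}[1+O(1/\kappa)]$ and then use $\abs{t}=\kappa/(2\pi\delta)$ together with $\rho(\delta)=1+O(\delta^2)$ to collect the powers of $2$, $\pi$, $\delta$ and $\kappa$; the errors $O(\delta^2)$ from $\rho$, $O(1/\kappa)$ from~\eqref{modibess} and $g$ all merge into $O(\delta+1/\kappa)$, and the resulting expression is exactly the claimed one.

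\emph{The case $m$ even, $m\ge 2$.} I would start from the descent identity of Lemma~\ref{lemmaalpha}, which writes $p^{(m)}_{1,k_1,k_2}(x,t)$ as $\int_{\R}(1+t_{m+1}^2/\abs{t}^2)^{-k_2/2}\,p^{(m+1)}_{1,k_1,k_2}(x,(t,t_{m+1}))\,\dd t_{m+1}$ plus a remainder dominated by $\abs{t}^{-1}$ times integrals of the same type with $p^{(m+1)}_{1,k_1,r}$, $r<k_2$. As $m+1$ is odd, I substitute~\eqref{cor:odd} for $p^{(m+1)}$ and change variables $t_{m+1}=\abs{t}s$, arriving (as in the passage preceding~\eqref{Ibeta}) at an expression whose only nontrivial ingredient is $\mathscr{I}_\beta$ from~\eqref{Ibeta} with $\beta=-k_2+n+k_1-1-\frac m2$. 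The heart of the matter is to estimate $\mathscr{I}_\beta$ in case~\textbf{II}. Here $\kappa(s)=\kappa\sqrt{\sigma(s)}\ge\kappa\to+\infty$ uniformly in $s$, so~\eqref{modibess} turns $\tilde I_{n+k_1-1}(\kappa(s)\rho(\delta(s)))$ into $e^{\kappa(s)\rho(\delta(s))}(\kappa(s)\rho(\delta(s)))^{-(n+k_1-1)}(2\pi\kappa(s)\rho(\delta(s)))^{-1/2}[1+O(1/\kappa)]$; factoring out the pure powers $\kappa^{-(n+k_1-1)}(2\pi\kappa)^{-1/2}$ and absorbing the $e^{-\kappa\rho(\delta)}$ that multiplies $\mathscr{I}_\beta$ into the exponent, one is left with a Laplace integral $\int_{\R}e^{-\frac{\kappa}{\delta}f_\delta(s)}g_\delta(s)\,[1+O(\delta+1/\kappa)]\,\dd s$, where, using $\pi\abs{t}=\kappa/(2\delta)$ and~\eqref{esponente}, $f_\delta(s)=\tfrac12(\sigma(s)-1)-\delta[\sigma(s)^{1/2}\rho(\delta(s))-\rho(\delta)]$ and $g_\delta(s)=\sigma(s)^{\beta}(\sigma(s)^{1/2}\rho(\delta(s)))^{-(n+k_1-1/2)}$. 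In fact $\tfrac{\kappa}{\delta}f_\delta(s)=\tfrac14\big[d(x,(t,t_{m+1}))^2-d(x,t)^2\big]\ge 0$ by~\eqref{esponente} and the monotonicity of $d(x,\cdot)$ in $\abs{t}$, which gives hypothesis~1 of Theorem~\ref{stationaryphase} for free. As $\delta\to0^+$ the family $\{f_\delta\}$ converges in $\mathcal E(\R)$ to $f_0(s)=\tfrac12(\sigma(s)-1)$, which is nonnegative, grows linearly, and satisfies $f_0(0)=f_0'(0)=0$, $f_0''(0)=\tfrac12\neq0$; hence for $\delta$ small $\{f_\delta\}$ verifies hypotheses~1--3 of Theorem~\ref{stationaryphase} in its Laplace form (Remark~\ref{Laplace}) uniformly, while $\{g_\delta\}$ verifies~4 by polynomial growth. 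The large parameter is $\kappa/\delta=2\pi\abs{t}\to+\infty$, so with $k=0$ the theorem gives $\mathscr{I}_\beta=\kappa^{-(n+k_1-1)}(2\pi\kappa)^{-1/2}\,g_\delta(0)\sqrt{2\pi\delta/(\kappa f_\delta''(0))}\,[1+O(\delta+1/\kappa)]$; since $g_\delta(0)=\rho(\delta)^{-(n+k_1-1/2)}=1+O(\delta^2)$ and $f_\delta''(0)=\tfrac12+O(\delta)$, this simplifies to $\sqrt{2\delta}\,\kappa^{-(n+k_1)}[1+O(\delta+1/\kappa)]$.

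Substituting this into Lemma~\ref{lemmaalpha} and using $\abs{t}=\kappa/(2\pi\delta)$, a routine bookkeeping of powers of $2$, $\pi$, $\delta$, $\kappa$ shows that the main term equals the right‑hand side of the statement. The remainder in Lemma~\ref{lemmaalpha} is $O(1/\abs{t})=O(\delta/\kappa)$ times a quantity of the same order as the main term (the analysis above is insensitive to $\beta$ at leading order), hence is $o(\delta+1/\kappa)$ relative to it and is absorbed. I expect the main obstacle to be this even case: identifying $\kappa/\delta$ rather than $\kappa$ as the correct large parameter, checking that $\{f_\delta\}$ and $\{g_\delta\}$ form bounded families meeting the uniform hypotheses of Theorem~\ref{stationaryphase} as $\delta\to0^+$, and verifying that the several error terms — from~\eqref{modibess}, from $g$ in~\eqref{cor:odd}, and from the descent remainder — are all uniform in $s$ so that they coalesce into the single error $O(\delta+1/\kappa)$. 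The constant‑matching, though tedious, presents no real difficulty.
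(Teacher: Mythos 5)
Your proposal follows essentially the same route as the paper: for $m$ odd, reduce to Theorem~\ref{stimeII} via the dimensional descent \eqref{eqdispariprinc}/Corollary~\ref{stime234dispari} and expand the Bessel factor with \eqref{modibess}; for $m$ even, pass through Lemma~\ref{lemmaalpha} to the integral $\mathscr{I}_\beta$ of \eqref{Ibeta} and apply the Laplace form of Theorem~\ref{stationaryphase}. Your phase $f_\delta$ with large parameter $\kappa/\delta=2\pi\abs{t}$ is exactly $\tfrac{1}{2\pi}\phi_\delta$ with the paper's large parameter $\abs{t}$, so the Laplace step is identical up to normalization, and the bookkeeping of constants matches. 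The one genuinely new wrinkle is the identity $\tfrac{\kappa}{\delta}f_\delta(s)=\tfrac14\bigl[d(x,(t,t_{m+1}))^2-d(x,t)^2\bigr]$, which via \eqref{esponente} and the monotonicity of $d$ in $\abs{t}$ gives nonnegativity of the phase cleanly; this is a nice geometric reformulation the paper does not make explicit. However, be careful with the claim that $\mathcal{E}(\R)$-convergence of $f_\delta\to f_0$ yields hypothesis~1 ``for free'': locally uniform convergence gives the bounds on compacta (and hence hypotheses 2--3 for $\delta$ small), but the second part of hypothesis~1 requires $f_\delta(s)\geq c_1\abs{s}$ for $\abs{s}\geq\eta$ uniformly in $\delta$, a statement about the behaviour at infinity that locally uniform convergence alone does not control. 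The paper closes this by computing $\phi_\delta'$ explicitly (formula \eqref{phiprimodelta} and the bound \eqref{1menocose}) to get $\phi_\delta(s)\geq \pi s^2/(4\sigma(s))$ for all $s$; you would need an analogous global estimate, which is easy to supply but is not automatic from convergence.
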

	\begin{proof}
		{When $m$ is odd, the theorem is obtained by combining Theorem \ref{stimeII} with \eqref{eqdispariprinc}}. Therefore, we only consider $m$ even. By the preceding arguments, it will be sufficient to study $\mathscr{I}_\beta$ in \eqref{Ibeta}.
		
		Since the argument of the modified Bessel function tends to $+\infty$, we use the development \eqref{modibess}, which gives
		\begin{multline*} 
			\mathscr{I}_{\beta}=\frac{(2\pi)^{-n-k_1} e^{\kappa\rho(\delta)}}{\delta^{n+k_1-\frac{1}{2}}|t|^{n+k_1-\frac{1}{2}}} \int_\R e^{-|t|\phi_\delta(s)}\frac{\sigma(s)^{\beta - \frac{1}{4} - \frac{n+k_1-1}{2}}}{\rho\left(\delta(s)\right)^{n+k_1-\frac{1}{2}}}\\ \times \left[1+ O\left(\frac{1}{\delta |t|\sqrt{\sigma(s)}}\right)\right] \left[ 1+g(\abs{x},|t|\sigma(s))\right]\,\dd s
		\end{multline*}
		where
		\[
		\phi_\delta(s)= \pi [\sigma(s)-1]+ 2\pi \delta\left[\rho(\delta) - \sqrt{\sigma(s)}\rho\left(\delta(s)\right) \right].
		\]
		We first study the principal part of the integral, to which we apply Laplace's method (see Remark \ref{Laplace}) with \[\mathscr{F}=\{\phi_\delta\colon \delta \in [0,\delta_2]\},\qquad \mathscr{G}= \left\{\frac{\sigma(\cdot)^{\beta - \frac{1}{4} - \frac{n+k_1-1}{2}}}{\rho\left(\delta(\cdot)\right)^{n+k_1-\frac{1}{2}}}\colon \delta \in [0,\delta_2]\right\}\]
for some $\delta_2$, smaller than the $\delta_1$ of Lemma~\ref{stime:lem:6}, to be determined.
		\begin{itemize}
			\item[2.] It is easily seen that $\phi_\delta(0)=0$. Moreover
			\begin{equation}\label{phiprimodelta}
				\phi_\delta'(s)= \pi \frac{s}{\sigma(s)}\left[ 1- \delta(s) \rho\left(\delta(s)\right)+ \frac{\delta^2}{\sigma(s)^{\frac{3}{2}}} \rho'\left( \delta(s)\right)\right],
			\end{equation}
			so that $\phi_\delta'(0)=0$ and $\phi_\delta''(0)= \pi( 1-\delta\rho(\delta)+\delta^2 \rho'(\delta) )$. Observe that there is $\delta_2>0$, which we may choose smaller than $\delta_1$, such that 
			\begin{equation}\label{1menocose}
				1- \delta(s) \rho\left(\delta(s)\right)+ \frac{\delta^2}{\sigma(s)^{\frac{3}{2}}} \rho'\left( \delta(s)\right)\geq \frac{1}{2}
			\end{equation}
			for every $s$ and every $\delta \in [0,\delta_2]$. Therefore, $\phi''_\delta(0)\geq \frac{\pi}{2}$ for every $\delta\in [0,\delta_2]$.
			
			\item[3.] {By~\eqref{phiprimodelta} and~\eqref{1menocose}, for $s\in\R$ and $\delta \in (0,\delta_2)$},
			\begin{equation}\label{absphidelta}
				\abs{\phi_\delta'(s)} \geq \frac{\pi}{2\sigma(s)} \abs{s}.
			\end{equation}
			In particular, $\abs{\phi_\delta'(s)}\geq \frac{\pi}{2\sigma(2)} \abs{s}$ for every $s\in [-2,2]$. 
			
			\item[1.] Observe that $\phi_\delta'(s)= \sign(s)\abs{\phi'_\delta(s)}$ by~\eqref{phiprimodelta}; then, by~\eqref{absphidelta},
			\[
			\phi_\delta(s)=\int_0^s \sign(s)\abs{\phi'_\delta(u)}\,\dd u=\abs*{\int_0^s \abs{\phi'_\delta(u)}\,\dd u}\geq \frac{\pi}{2\sigma(s)} \abs*{\int_0^s \abs{u}\,\dd u}\geq  \frac{\pi s^2}{4\sigma(s)}
			\]
			for every $s\in\R$, since $\sigma$ is even and increasing on $[0,\infty)$.
			\item[4.] By definition of $\sigma$ and since $\rho$ is continuous in zero, we get $g(s) \lesssim |s|^{\beta - \frac{1}{4} - \frac{n+k_1-1}{2}}$ for $s\to \infty$, uniformly in $g \in \mathscr{G}$.
		\end{itemize}
		
		{By Theorem~\ref{stationaryphase}, then,}
		\[
		\begin{split}
		\int_\R e^{-\abs{t} \phi_\delta(s)}\frac{\sigma(s)^{\beta - \frac{1}{4} - \frac{n+k_1-1}{2}}}{\rho\left(\delta(s)\right)^{n+k_1-\frac{1}{2}}} \,\dd s&= \sqrt{\frac{2}{\abs{t}\left( 1-\delta\rho(\delta)+\delta^2 \rho'(\delta)\right) }} \left[1+O\left( \frac{1}{\abs{t}}\right)  \right] \\
		&=\sqrt{\frac{2}{\abs{t}}} \left[1+O\left(\delta+\frac{1}{\abs{t}}\right)\right].
		\end{split}
		\]
		The remainder can be treated similarly, and with the same arguments as above one gets
		\[\begin{split}
		\int_\R e^{-|t|\phi_\delta(s)}&\frac{\sigma(s)^{\beta - \frac{1}{4} - \frac{n+k_1-1}{2}}}{\rho\left(\delta(s)\right)^{n+k_1-\frac{1}{2}}} {\left[O\left(\frac{1}{\delta |t|\sqrt{\sigma(s)}}\right) + O\left(\frac{1}{\kappa}+\delta \right)\right]}\,\dd s\\
		&\qquad\qquad= \sqrt{\frac{2}{\abs{t}}}
		{\left[ 1+ O\left(\delta + \frac{1}{\abs{t}}\right)  \right] O\left(\frac{1}{\delta \abs{t}}+\frac{1}{\kappa}+\delta  \right)}=\sqrt{\frac{2}{\abs{t} }}O\left(\frac{1}{\kappa} +\delta\right)
		\end{split}\]
		since {$\frac{1}{\delta |t|}= \frac{2\pi }{\kappa}=O\left(\frac{1}{\kappa}\right)$} and $1/\sqrt{\sigma(s)}\leq 1$ for every $s\in \R$. The proof is complete.
	 \end{proof}
	\subsection{\textbf{III} and \textbf{IV}. Estimates for $\delta\to 0^+$ and $\kappa$ bounded.}
	These two cases can be treated together and the principal part of $p_{1,k_1,k_2}^{(m)}$ is easy to get. The remainders are more tricky, since when passing from the $m$-dimensional variable $t$ to the $(m+1)$-dimensional variable $t'$ the {asymptotic conditions in} {\bf II}, {\bf III} and {\bf IV} do not correspond to {those in} {\bf II'}, {\bf III'}, {\bf IV'} (these symbols standing for the cases relative to $m+1$); on the contrary, they mix together according to the values of the additional variable $t_{m+1}$.
	\begin{theorem}\label{HstimeIIIeIV} Fix $C>1$. If $\delta\to 0^+$ while {$1/C\leq \kappa \leq C$}, then
		\begin{equation*}
			p_{1,k_1,k_2}(x,t)=  \frac{(-1)^{k_2} \pi^{k_1+k_2}}{ 4^n (\pi\delta)^{n+k_1-\frac{m+1}{2}} \kappa^{\frac{m-1}{2}} }e^{-\frac{1}{4}d(x,t)^2 } e^{-\kappa} I_{n+k_1-1}(\kappa )  \left[1+O\left(\delta \right) \right]	.
		\end{equation*}
	When $\kappa\to 0^+$ and $\abs{t}\to +\infty$
		\[
		\begin{split} 
		p_{1,k_1,k_2}(x,t)= \frac{(-1)^{k_2} \pi^{k_1+k_2}}{{2^{2n + \frac{m-1}{2}}}(n+k_1-1)!}\abs{t}^{n+k_1-1-\frac{m-1}{2}}e^{-\frac{1}{4}d(x,t)^2} \left[1+O\left(\kappa+\frac{1}{\abs{t}} \right) \right].
		\end{split}
		\]	
	\end{theorem}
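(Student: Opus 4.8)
The plan is to treat the two parities of $m$ separately, following the scheme announced at the start of this section. When $m$ is odd, the statement is a specialisation of Corollary~\ref{stime234dispari}: formula~\eqref{cor:odd} already expresses $p^{(m)}_{1,k_1,k_2}(x,t)$ through $e^{-\frac14 d(x,t)^2}e^{-\kappa\rho(\delta)}\tilde I_{n+k_1-1}(\kappa\rho(\delta))$, so it only remains to simplify the right-hand side in each regime. In case \textbf{III}, where $\kappa$ stays in $[1/C,C]$, Lemma~\ref{stime:lem:6} gives $\rho(\delta)=1+O(\delta^2)$, whence by Taylor's formula $e^{-\kappa\rho(\delta)}\tilde I_{n+k_1-1}(\kappa\rho(\delta))=e^{-\kappa}\tilde I_{n+k_1-1}(\kappa)\bigl[1+O(\delta^2)\bigr]$; using $\tilde I_{n+k_1-1}(\kappa)=\kappa^{-(n+k_1-1)}I_{n+k_1-1}(\kappa)$ together with $|t|=\kappa/(2\pi\delta)$ to convert the power of $|t|$ in~\eqref{cor:odd} into powers of $\delta$ and $\kappa$, and observing that $g$ is $O(\delta)$ here, one obtains the first displayed formula. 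In case \textbf{IV}, where $\kappa\to 0^+$, one has $e^{-\kappa\rho(\delta)}=1+O(\kappa)$ and $\tilde I_{n+k_1-1}(\kappa\rho(\delta))=\tilde I_{n+k_1-1}(0)+O(\kappa)=\frac{1}{2^{n+k_1-1}(n+k_1-1)!}+O(\kappa)$, and reading off the expansion with remainder $O(\kappa+1/|t|)$ settles this parity. (Equivalently, one may combine~\eqref{eqdispariprinc} with Theorem~\ref{stimeIIIeIV}.)

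For $m$ even I would invoke the descent of Lemma~\ref{lemmaalpha}, which writes $p^{(m)}_{1,k_1,k_2}(x,t)$ as the integral~\eqref{alphar} with $(\alpha,r)=(-k_2/2,k_2)$ plus a remainder bounded by $\frac1{|t|}$ times integrals of type~\eqref{alphar} with $\alpha=-\tfrac12$ and $r<k_2$. Since $m+1$ is odd, substituting~\eqref{cor:odd} for $p^{(m+1)}_{1,k_1,r}$ and performing the change of variables $t_{m+1}=|t|s$ reduces the matter to the integrals $\mathscr{I}_\beta$ of~\eqref{Ibeta}. In both cases \textbf{III} and \textbf{IV} one has $|t|\to+\infty$, the weight $e^{-|t|\pi(\sigma(s)-1)}$ concentrates the mass of $\mathscr{I}_\beta$ at $s=0$, where $\sigma(s)-1=\tfrac{s^2}2+O(s^4)$ so that $\pi(\sigma(\cdot)-1)$ has a non-degenerate minimum with second derivative $\pi$; moreover $\sigma(s)$, $\delta(s)=\delta/\sqrt{\sigma(s)}$ and $\kappa(s)=\kappa\sqrt{\sigma(s)}$ are even, hence equal to their values at $0$ up to $O(s^2)$. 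Thus Laplace's method (Theorem~\ref{stationaryphase} in the form of Remark~\ref{Laplace}) applies and yields $\mathscr{I}_\beta=\sqrt{2/|t|}\,\tilde I_{n+k_1-1}(\kappa\rho(\delta))\bigl[1+g(|x|,|t|)\bigr]\bigl(1+O(1/|t|)\bigr)$, with $g$ as in~\eqref{remainders}. Plugging this into the reduced form of~\eqref{alphar} (which already displays the factor $e^{-\frac14 d(x,t)^2}e^{-\kappa\rho(\delta)}$) and simplifying exactly as in the odd case — using $I_\nu(\zeta)=\zeta^\nu\tilde I_\nu(\zeta)$ and $|t|=\kappa/(2\pi\delta)$, then $\rho(\delta)=1+O(\delta^2)$ in case \textbf{III} and the Taylor expansion of $\tilde I_{n+k_1-1}$ at $0$ in case \textbf{IV} — gives the stated principal term. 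The remainder in Lemma~\ref{lemmaalpha}, being $\frac1{|t|}$ times an integral of the same order of magnitude, contributes a relative error $O(1/|t|)$, which is $O(\delta)$ in case \textbf{III} and admissible in case \textbf{IV}.

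I expect the main obstacle to be the uniform justification of the Laplace estimate for $\mathscr{I}_\beta$ in the even case. On the one hand, one must show that the region of large $|s|$ is negligible even though the amplitude can grow: by~\eqref{modibess}, $\tilde I_{n+k_1-1}(\kappa(s)\rho(\delta(s)))$ behaves like $e^{\kappa\sqrt{|s|}}$ as $|s|\to\infty$, so the exponential gain $e^{-|t|\pi(\sigma(s)-1)}\lesssim e^{-c|t||s|}$ together with $|t|\to+\infty$ is precisely what absorbs it and leaves an $O(e^{-c|t|})$ tail. On the other hand, as $s$ varies the pair $(\delta(s),\kappa(s))$ passes through the regimes \textbf{II$'$}, \textbf{III$'$}, \textbf{IV$'$} attached to the $(m+1)$-dimensional variable, so $g(|x|,|t|\sigma(s))$ changes form and must be integrated against the Gaussian weight using the three different bounds of~\eqref{remainders}; verifying that in each of these subregions the contribution stays of the asserted order is the technical heart of the argument, and the reason why the remainder in Lemma~\ref{lemmaalpha} is indeed of lower order than the principal part.
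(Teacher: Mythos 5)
Your proposal matches the paper's proof essentially step for step: reduce the odd case to Theorem~\ref{stimeIIIeIV} via~\eqref{eqdispariprinc}/Corollary~\ref{stime234dispari}, then handle the even case by the descent of Lemma~\ref{lemmaalpha}, the change of variables $t_{m+1}=|t|s$, and Laplace's method applied to $\mathscr{I}_\beta$ with phase $\pi(\sigma(s)-1)$. You also correctly flag that the real work is bounding the remainder $\mathscr{I}'_\beta$ by splitting the $s$-integral according to which regime $(\delta(s),\kappa(s))$ lies in — precisely the decomposition the paper carries out with the regions $\kappa(s)\le\kappa_2$, $\kappa_2\le\kappa(s)\le\kappa_1$, $\kappa(s)\ge\kappa_1$ — though you outline this step rather than execute it.
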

	
	\begin{proof}
		{The theorem holds when $m$ is odd by Theorem~\ref{stimeIIIeIV} combined with~\eqref{eqdispariprinc}}.
		When $m$ is even, we shall apply Laplace's method to $\mathscr{I}_\beta$. We first deal with the principal part. Define first
		\[\phi(s)= \pi  \sigma(s) -\pi ,\]
		so that Theorem~\ref{stationaryphase} will be applied to 
		\[\mathscr{F}=\{\phi\}, \qquad \mathscr{G}= \{\sigma(\cdot)^{\beta} \tilde I_{n+k_1-1}\left(\kappa(\cdot)\rho\left(\delta(\cdot)\right) \right)\colon \delta\in [0,\delta_1), \,\kappa \in [0,C]\}\] where $\delta_1$ is that of Lemma~\ref{stime:lem:6}.
		\begin{itemize}
			\item[2.] Notice that $\phi(0)=0$, that $\phi'(s)= \pi \frac{s}{\sigma(s)}$, and that $\phi''(0)=\pi $.
			\item[1.] Observe that $\phi(s)=\pi \frac{s^2}{1+\sqrt{1+s^2}}\geq \pi\frac{s^2}{2+\abs{s}}$, for every $s\in \R$.
			
			\item[3.] It is easily seen that $\abs{\phi'(s)}\geq \frac{\pi}{\sigma(1)} \abs{s}$ for every $s\in [-1,1]$.
			\item[4.] Recall that by~\eqref{modibess}
			\[\tilde I_{n+k_1-1}(\kappa(s)\rho(\delta(s))) \lesssim e^{\kappa(s)\rho(\delta(s))}\lesssim e^{\kappa \sqrt{\sigma(s)}}\] as $s\to \infty$, uniformly as $\kappa\in [0, C]$ and $\delta\in [0,\delta_1)$. Hence, there is a constant  $c_1>0$ such that $\abs{\sigma(s)^{\beta}\tilde I_{n+k_1-1}\left(\kappa(s)\rho\left(\delta(s)\right) \right)}\leq c_1 e^{c_1\abs{s}} $.
		\end{itemize}
		Therefore, by Theorem~\ref{stationaryphase}
		\begin{equation*}
			\int_\R e^{-\abs{t}\phi(s) }\sigma(s)^\beta \tilde I_{n+k_1-1}\left(\kappa(s)\rho\left(\delta(s) \right) \right) \,\dd s=\sqrt{\frac{2}{\abs{t}}} \tilde I_{n+k_1-1}(\kappa \rho(\delta))\left[1+O\left(\frac{1}{\abs{t}} \right)  \right]
		\end{equation*}
		uniformly in $\kappa$ and $\delta$. Since $\tilde I_{n+k_1-1}(\kappa \rho(\delta))-\tilde I_{n+k_1-1}(\kappa)=O(\kappa \rho(\delta)-\kappa)=O(\delta^2)$ uniformly as $ \kappa \in[0, C]$ by Taylor's formula, we are done with the principal part. We now deal with the remainders, {namely}
		\[
		\mathscr{I}'_\beta =	\int_\R e^{-\abs{t}\phi(s) } \sigma(s)^\beta \tilde I_{n+k_1-1}\left(\kappa(s) \rho\left(\delta(s) \right) \right) g( \abs{x}, \abs{t}\sigma(s))\,\dd s 
		\]
		where
		\[
		g( \abs{x}, \abs{t}\sigma(s))= \begin{cases}
		O\left(\delta(s)+\frac{1}{\kappa(s)}\right) &\text{if $\delta(s)\to 0^+$ and $\kappa(s)\to +\infty$,}\\
		O(\delta(s)) &\text{if $\delta(s)\to 0^+$ and $\kappa(s){\in[{1}/{C'}, C'  ]}$,}\\
		O\left(\frac{1}{\abs{t}\sigma(s)}+\kappa(s)\right) &\text{if $\delta(s)\to 0^+$ and $\kappa(s)\to 0^+$}
		\end{cases}
		\]
		for every $C'>1$.
		Since $\delta(s)\leq \delta$ for every $s\in\R$, we may find some positive constants $C''$, $\delta_2\leq \delta_1$, where $\delta_1$ is that of Lemma~\ref{stime:lem:6}, and $\kappa_2\leq \kappa_1$ such that
		\[
		\abs{g(\abs{x}, \abs{t}\sigma(s))}\leq \begin{cases}
		C''\left(\delta(s)+\frac{1}{\kappa(s)}\right) &\text{when $\delta \leq \delta_2,\; \kappa(s)\geq \kappa_1$,}\\
		C''\delta(s) &\text{when $\delta \leq \delta_2,\; \kappa_2\leq\kappa(s)\leq \kappa_1$,}\\
		C''\left(\frac{1}{\abs{t}\sigma(s)}+\kappa(s)\right) &\text{when $\delta\leq \delta_2,\; \kappa(s)\leq  \kappa_2$.}
		\end{cases}
		\]
		We shall split the integrals accordingly. Notice first that we may assume also that {$\kappa_2\leq {1}/{(2 C)}\leq 2C\leq \kappa_1$}, and, up to taking a smaller $\delta_2$, that
		\[
		\phi(s)-2\pi\delta\sqrt{\sigma(s)}\rho\left(\delta(s) \right)\geq \frac{1}{2}\abs{s}
		\]
		whenever $\abs{s}\geq 2$ and $\delta\in [0,\delta_2)$.
		
		Consider first case {\bf III}, where {$\kappa \in [1/C,C]$}. We split
		
		\[\mathscr{I}'_\beta = \int_{\kappa(s) \leq \kappa_1} + \int_{\kappa(s) \geq \kappa_1} = \mathscr{I}'_{\beta,1} + \mathscr{I}'_{\beta,2}.  \]
		Observe that $\kappa(s) \geq \kappa_1$ if and only if $|s|\geq \sqrt{\frac{\kappa_1^4}{\kappa^4}-1 } \eqqcolon s_{1,\kappa}\geq 2$. Since
		\[\tilde I_{n+k_1-1}(\kappa(s)\rho(\delta(s)) e^{-\abs{t}\phi(s)}=O\left(e^{\abs{t}[2\pi\delta \sqrt{\sigma(s)}\rho(\delta(s))-\phi(s)   ] }\right)=O\left(e^{-\frac{1}{2}\abs{t}\abs{s}}\right)\]
		as $s\to \infty$, and since $\delta=O\left(\frac{1}{\kappa}\right)=O(1)$ in case {\bf III}, we get
		\begin{equation*}
			\begin{split}
				\abs{\mathscr{I}'_{\beta,2}} &\leq C' \left(\delta+\frac{1}{\kappa}\right)\int_{\substack{\abs{s} \geq s_{1,\kappa}} }   \sigma(s)^{\beta-\frac{1}{2}}\tilde I_{n+k_1-1}\left(\kappa(s) \rho\left(\delta(s)\right) \right) e^{-\abs{t}\phi(s) }  \,\dd s =O\left( e^{-\frac{s_{1,\kappa}}{4} \abs{t} } \right),
			\end{split}
		\end{equation*}
		which is negligible relative to $\frac{1}{\abs{t}^{\sfrac{3}{2}}}$. {By Laplace's method, moreover,}
		\begin{equation*}
			\abs{\mathscr{I}'_{\beta,1}}\leq C'\delta\int_{|s|\leq s_{1,\kappa} }   \sigma(s)^{\beta-\frac{1}{2}} \tilde I_{n+k_1-1}\left(\kappa(s)\rho\left(\delta(s) \right) \right) e^{-\abs{t}\phi(s) }  \,\dd s =O\left(  \delta\frac{1}{\sqrt{\abs{t}}}  \right)  
		\end{equation*}
		{with the same arguments as above.} {This concludes the study of case {\bf III}.}
		
		Consider now case {\bf IV}, where $\kappa\to 0^+$. We split 
		\[\mathscr{I}'_\beta = \int_{\kappa(s)\leq \kappa_2} +  \int_{\kappa_2 \leq \kappa(s) \leq \kappa_1} +\int_{\kappa(s) \geq \kappa_1} = \mathscr{I}'_{\beta,1} +\mathscr{I}'_{\beta,2} + \mathscr{I}'_{\beta,3}.  \]	
		Observe that $\kappa(s)\geq \kappa_2$ if and only if $s\geq \sqrt{\frac{\kappa_2^4}{\kappa^4}-1 }\eqqcolon s_{2,\kappa}$, and $s_{1,\kappa}\geq s_{2,\kappa}\geq 2$ if $\kappa$ is sufficiently small.	Exactly as above, we get
		\begin{equation*}
			\abs{\mathscr{I}'_{\beta,3}} \leq C' \left(\delta+\frac{1}{\kappa}\right)\int_{\substack{\abs{s} \geq s_{1,\kappa}} }   \sigma(s)^{\beta-\frac{1}{2}} \tilde I_{n+k_1-1}\left(\kappa(s)\rho\left(\delta(s) \right) \right) e^{-\abs{t}\phi(s) }  \,\dd s 
			=O\left(  \frac{1}{\kappa} e^{-\frac{s_{1,\kappa}}{4} \abs{t} } \right)  
		\end{equation*}
		which is negligible relative to $\frac{1}{\abs{t}^{\sfrac{3}{2}}}$. Then
		\begin{equation*}
			\abs{\mathscr{I}'_{\beta,2}} \leq C'\delta\int_{s_{2,\kappa}\leq\abs{s} \leq s_{1,\kappa}}   \sigma(s)^{\beta-\frac{1}{2}}\tilde I_{n+k_1-1}\left(\kappa(s) \rho\left(\delta(s) \right) \right) e^{-\abs{t}\phi(s) }  \,\dd s 
			=O\left(  \delta\, e^{-\frac{s_{2,\kappa}}{4} \abs{t} } \right)  ,
		\end{equation*}
		which is negligible relative to $\frac{1}{\abs{t}^{\sfrac{3}{2}}} $ in case \textbf{IV}. Finally,
		\[ \begin{split}
		\abs{\mathscr{I}'_{\beta,1}} &\leq C' \int_{\substack{ \abs{s} }\leq s_{2,\kappa}}   \sigma(s)^{\beta} \tilde I_{n+k_1-1}\left(\kappa(s)\rho\left(\delta(s) \right) \right) e^{-\abs{t}\phi(s) }\left(\sqrt{\sigma(s)}\kappa+\frac{1}{\sigma(s)\abs{t}} \right) \,\dd s
		\\&=O\left[ \frac{1}{\sqrt{\abs{t}}}\left(\frac{1}{\abs{t}}+\kappa \right) \right]  ,\end{split}
		\]
		{by Laplace's method as above.}
		The proof is complete.
	 \end{proof}
	We can finally state the following corollary, which is the natural extension of Corollary~\ref{cor:1}.
	\begin{corollary}\label{Hstime234}
		For $(x,t) \to \infty$ and $\delta \to 0^+$
		\[
		\begin{split} 
		p_{1,k_1,k_2}(x,t)= \frac{(-1)^{k_2} \pi^{k_1+k_2}}{2^{n-k_1+1 + \frac{m-1}{2}}}\abs{t}^{n+k_1-\frac{m+1}{2}}e^{-\frac{1}{4}d(x,t)^2 } e^{-\kappa \rho(\delta)}\tilde I_{n+k_1-1}(\kappa \rho(\delta)) \left[1+g(\abs{x},\abs{t})\right],
		\end{split}
		\]
		where
		\[
		g(\abs{x},\abs{t})=\begin{cases}
		O\left(\delta+\frac{1}{\kappa}\right) &\text{if $\delta\to 0^+$ and $\kappa\to +\infty$,}\\
		O(\delta) &\text{if $\delta\to 0^+$ and $\kappa \in{[1/C,C]}$,}\\
		O\left(\frac{1}{\abs{t}}+\kappa\right) &\text{if $\delta\to 0^+$ and $\kappa\to 0^+$}
		\end{cases}
		\]
		for every $C>0$.
	\end{corollary}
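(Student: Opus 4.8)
The plan is to argue exactly as in the proof of Corollary~\ref{cor:1}, now starting from Theorems~\ref{HstimeII} and~\ref{HstimeIIIeIV} in place of Theorems~\ref{stimeII} and~\ref{stimeIIIeIV}; all that changes is the bookkeeping of the explicit prefactors, which now carry the extra powers of $\kappa$ and $\abs{t}$ coming from the $m$-dimensional normalization. Throughout I would use the identities $R=\frac{\kappa\delta}{2}$ and $\abs{t}=\frac{\kappa}{2\pi\delta}$ (equivalently $\frac{2\abs{t}}{\kappa}=\frac{1}{\pi\delta}$), together with the fact that $\rho(\delta)=1+O(\delta^2)$ as $\delta\to0$, which follows from Lemma~\ref{stime:lem:6}. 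The asymptotic condition $\delta\to0^+$ is split into the three regimes $\kappa\to+\infty$, $\kappa\in[1/C,C]$ and $\kappa\to0^+$, which are treated separately and then recombined.

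When $\kappa\to+\infty$ one invokes Theorem~\ref{HstimeII}. By~\eqref{modibess}, $e^{-\kappa\rho(\delta)}\tilde I_{n+k_1-1}(\kappa\rho(\delta))=\frac{1}{(\kappa\rho(\delta))^{n+k_1-1}\sqrt{2\pi\kappa\rho(\delta)}}\bigl[1+O(1/\kappa)\bigr]$; substituting $\rho(\delta)=1+O(\delta^2)$ and $\abs{t}=\frac{\kappa}{2\pi\delta}$ turns the right-hand side of the corollary into the right-hand side of Theorem~\ref{HstimeII} up to a factor $1+O(\delta+1/\kappa)$, which is absorbed into $g$. When $\kappa\in[1/C,C]$ one uses Theorem~\ref{HstimeIIIeIV}: since $I_\nu(s)=s^\nu\tilde I_\nu(s)$ by the defining series, $e^{-\kappa}I_{n+k_1-1}(\kappa)=\kappa^{n+k_1-1}e^{-\kappa\rho(\delta)}\tilde I_{n+k_1-1}(\kappa\rho(\delta))\bigl[1+O(\delta^2)\bigr]$, using $e^{\kappa(\rho(\delta)-1)}=1+O(\delta^2)$ and $\tilde I_{n+k_1-1}(\kappa)-\tilde I_{n+k_1-1}(\kappa\rho(\delta))=O(\delta^2)$ (Taylor's formula, uniformly on $[1/C,C]$, where $\tilde I_{n+k_1-1}>0$); substituting $\abs{t}=\frac{\kappa}{2\pi\delta}$ then identifies the two expressions up to $1+O(\delta)$. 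Finally, when $\kappa\to0^+$ one has $\tilde I_{n+k_1-1}(\kappa\rho(\delta))=\tilde I_{n+k_1-1}(0)+O(\kappa)=\frac{1}{2^{n+k_1-1}(n+k_1-1)!}+O(\kappa)$ and $e^{-\kappa\rho(\delta)}=1+O(\kappa)$, and plugging these into the corollary recovers the second estimate of Theorem~\ref{HstimeIIIeIV} (noting $\abs{t}^{n+k_1-1-\frac{m-1}{2}}=\abs{t}^{n+k_1-\frac{m+1}{2}}$), up to $1+O(\kappa+1/\abs{t})$.

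In each regime the error produced is exactly the one recorded in~\eqref{remainders}, so the three computations assemble into the single statement. The only point requiring care is the matching of the explicit constants — the powers of $2$, $\pi$, $\kappa$, $\delta$ in the prefactors of Theorems~\ref{HstimeII} and~\ref{HstimeIIIeIV} against the factor $\frac{1}{2^{\,n-k_1+1+(m-1)/2}}\abs{t}^{\,n+k_1-(m+1)/2}$ of the corollary — which is a routine but slightly lengthy substitution using $R=\kappa\delta/2$ and $\abs{t}=\kappa/(2\pi\delta)$. Since $m$ odd and $m$ even are covered simultaneously by Theorems~\ref{HstimeII} and~\ref{HstimeIIIeIV}, no separate treatment of the parity of $m$ is needed (for $m$ odd one may alternatively quote Corollary~\ref{stime234dispari} directly). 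There is no genuine obstacle: the analytic work has already been done in the two theorems, and the corollary merely brings them to a common normal form.
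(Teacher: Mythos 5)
Your proposal is correct and is precisely the argument the paper leaves implicit: the text introduces Corollary~\ref{Hstime234} only with the remark that it is ``the natural extension of Corollary~\ref{cor:1},'' and your three-case reduction to Theorems~\ref{HstimeII} and~\ref{HstimeIIIeIV}, with the bookkeeping via $R=\kappa\delta/2$, $\abs{t}=\kappa/(2\pi\delta)$, $\rho(\delta)=1+O(\delta^2)$, the series identity $I_\nu(s)=s^\nu\tilde I_\nu(s)$, and the expansion~\eqref{modibess}, is exactly the intended computation. The constants and error terms all match (I checked the exponent arithmetic: the powers of $2$ collapse to $4^n$, the powers of $\kappa$ give $\kappa^{-m/2}$ in case~\textbf{II} and $\kappa^{\,n+k_1-\frac{m+1}{2}}$ in case~\textbf{III}, and $n+k_1-\frac{m+1}{2}=n+k_1-1-\frac{m-1}{2}$ in case~\textbf{IV}), so nothing is missing.
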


	We have not been able to find a single function which displays the asymptotic behaviour of $p_{1,k_1,k_2}(x,t)$ as $(x,t)\to \infty$, though we showed that the exponential decrease is the same in the four cases. This is also the same decrease found by Eldredge~\cite[Theorems 4.2 and 4.4]{Eldredge}, {when} $k_1=k_2=0$ and for the horizontal gradient, and Li~\cite[Theorems 1.4 and 1.5]{Li3}, {when} $k_1=k_2=0$. Notice that in~\cite[Theorem 1.5 and the following Remark (1)]{Li3} the remainders for $k_1=k_2=0$ seem to be better than the one we put in Corollary~\ref{Hstime234}, {but} {they} reduce to ours when developing the estimates in a more convenient form in cases {\bf II} and {\bf IV}, as we did in Theorems~\ref{HstimeII} and~\ref{HstimeIIIeIV}.
	
\begin{remark}\label{rem:sharpness}
Our sharp estimates for $p_{1,k_1,k_2}$ can be used to obtain asymptotic estimates of all the derivatives of the heat kernel $p_1$. Indeed, Faà di Bruno's formula leads to
\begin{multline}\label{partialdertx}
  \frac{\partial^{\abs{\gamma}}}{\partial x^{\gamma_1} \partial t^{\gamma_2}} p_{1}(x,t)\\ =\gamma_1 ! \gamma_2 ! \sum_{\eta,\mu,\beta} \frac{\abs{\mu}!2^{\abs{\mu_1}-\abs{\gamma_1}} }{\eta !\mu ! \beta !} \left[\prod_{h=1}^{\abs{\mu}} \left( \frac{\left(\frac{1}{2}\right)_h    }{h !}  \right)^{\beta_h}\right]x^{\eta_1} \sign(t)^{\mu_1} \abs{t}^{ \abs{\beta}-\abs{\gamma_2} } p_{1,\abs{\eta},\abs{\beta}}(x,t),
\end{multline}
where the sum is extended to all $\eta=(\eta_1,\eta_2)\in \N^{2n}\times \N^{2n}$, $\mu=(\mu_1,\mu_2)\in \N^m\times \N^m$ and $\beta \in \N^{\abs{\mu}}$ such that
\[
\gamma_1=\eta_1+2 \eta_2, \qquad \gamma_2=\mu_1+2\mu_2, \qquad \sum_{h=1}^{\abs{\mu}} h \beta_h= \abs{\mu}.
\]
However, the sharp asymptotic expansions we explicitly provided in Theorems~\ref{HstimeI},~\ref{HstimeII} and~\ref{HstimeIIIeIV} may not be enough to get {directly} \emph{sharp} asymptotic estimates of any desired derivative of $p_1$: some cancellations among the principal terms may indeed occur in~\eqref{partialdertx}. Nevertheless, by inspecting case by case, the interested reader could consider as many terms of the expansions given by Theorem~\ref{stationaryphase} {or Lemma~\ref{stime:lem:13}} as necessary. In the case when $t\to 0$, one may also make use of Lemma~\ref{lemmaTomega0} before expanding each term: a suitable choice for $N$ gets rid of the negative powers of $\abs{t}$ appearing in~\eqref{partialdertx}. Despite this, our estimates for $p_{1,k_1,k_2}$ lead to the \emph{sharp} behaviour at infinity of $\nabla_\Hc p_s$ and $\mathcal{L}p_s$, as we shall see in the next section.
\end{remark}	
	
	\section{Sub-Riemannian Ornstein Uhlenbeck Operators}\label{OU}
	For every $s>0$ consider the operator on $L^2(p_s)$ given by
	\begin{equation*}
		\mathcal{L}^{p_s}= \mathcal{L} - \frac{\nabla_\mathcal{H} {p_s}}{{p_s}} \cdot \nabla_\Hc {\colon C_c^\infty \to L^2(p_s)}
	\end{equation*}
	which arises from the Dirichlet form $\phi \mapsto \int_{G} \abs{\nabla_\mathcal{H}\phi(y)} ^2 p_s(y)\, \dd y$. For a fixed time $s>0$, $\mathcal{L}^{p_s}$ can be considered as a sub-Riemannian version of the classical Ornstein-Uhlenbeck operator (see~\cite{Baudoinetal, LustPiquard}). Arguing as Strichartz (\cite[Theorem 2.4]{Strichartz}) it is not hard to see that $\mathcal{L}^{p_s}$ with domain $C_c^\infty(G)$ is essentially self-adjoint on $L^2(p_s)$, for every $s>0$. Let us then consider its closure, which we still denote by $\mathcal{L}^{p_s}$.
	
	\begin{theorem}\label{teo:discrspectr}
		$\mathcal{L}^{p_s}$ has purely discrete spectrum for all $s>0$.
	\end{theorem}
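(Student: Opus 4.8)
The plan is to transform $\mathcal{L}^{p_s}$ into a Schrödinger operator with a \emph{confining} potential and then to read the growth of that potential off the asymptotics established in the previous sections. By the scaling identity $p_s(x,t)=s^{-(n+m)}p_1\bigl(\tfrac{x}{\sqrt s},\tfrac ts\bigr)$ and the homogeneity $X_j(f\circ D_r)=r\,(X_jf)\circ D_r$ of the generators under the dilations $D_r(x,t)=(rx,r^2t)$, the function $V_s\coloneqq-\mathcal{L}\sqrt{p_s}/\sqrt{p_s}$ satisfies $V_s(x,t)=\tfrac1s V_1\bigl(\tfrac x{\sqrt s},\tfrac ts\bigr)$, so it is enough to work with $s=1$. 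First I would record the ``ground-state substitution'': since each $X_j$ is divergence-free and $\sum_jX_j(h\,X_jh)=\abs{\nabla_\Hc h}^2-h\,\mathcal{L}h$, an integration by parts gives, for $h=\sqrt{p_1}$ and all $\psi\in C_c^\infty(G)$,
\[
\int_G\abs{\nabla_\Hc\psi}^2p_1\,\dd y=\int_G\abs{\nabla_\Hc(\psi\sqrt{p_1})}^2\,\dd y+\int_G\psi^2\,V\,p_1\,\dd y,\qquad V\coloneqq-\frac{\mathcal{L}\sqrt{p_1}}{\sqrt{p_1}}\in C^\infty(G),
\]
so that $\int_G\abs{\nabla_\Hc\psi}^2p_1\geq\bigl(\inf_{\supp\psi}V\bigr)\norm{\psi}_{L^2(p_1)}^2$. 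By the standard variational (Persson-type) description of the bottom of the essential spectrum, this yields $\inf\sigma_{\mathrm{ess}}(\mathcal{L}^{p_1})\geq\liminf_{(x,t)\to\infty}V(x,t)$; hence Theorem~\ref{teo:discrspectr} reduces to proving
\[
V(x,t)\longrightarrow+\infty\qquad\text{as }(x,t)\to\infty.
\]

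To get hold of $V$ I would use that $p_1$ depends only on $R=\abs{x}^2/4$ and $\abs{t}$. Setting $v_j\coloneqq\tfrac12(J_{t/\abs{t}}x,e_j)$ one has $X_jp_1=\tfrac{x_j}2p_{1,1,0}+v_jp_{1,0,1}$; since $J_{t/\abs{t}}$ is orthogonal and skew-symmetric, $\sum_jv_j^2=R$, $\sum_jx_jv_j=0$ and $\sum_j\abs{X_jv_j}=O(R/\abs{t})$. A short computation gives
\[
\abs{\nabla_\Hc p_1}^2=R\bigl(p_{1,1,0}^2+p_{1,0,1}^2\bigr),\qquad
\mathcal{L}p_1=-R\bigl(p_{1,2,0}+p_{1,0,2}\bigr)-n\,p_{1,1,0}+O\!\Bigl(\tfrac R{\abs{t}}\Bigr)p_{1,0,1},
\]
and therefore, writing $r_{k_1,k_2}\coloneqq p_{1,k_1,k_2}/p_{1,0,0}$ (finite, since $p_1>0$),
\[
V=-\frac{\mathcal{L}p_1}{2p_1}-\frac{\abs{\nabla_\Hc p_1}^2}{4p_1^2}=\frac R2\bigl(r_{2,0}+r_{0,2}\bigr)+\frac n2\,r_{1,0}-\frac R4\bigl(r_{1,0}^2+r_{0,1}^2\bigr)+O\!\Bigl(\tfrac R{\abs{t}}\Bigr)r_{0,1}.
\]
Only $p_{1,k_1,k_2}$ with $k_1+k_2\leq2$ enter (and $p_{1,1,1}$ cancels out), and in each ratio $r_{k_1,k_2}$ the common exponential $e^{-d(x,t)^2/4}$ disappears, so the $r_{k_1,k_2}$ are controlled by the explicit sub-polynomial prefactors in Theorems~\ref{HstimeI}, \ref{HstimeII}, \ref{HstimeIIIeIV} and Corollary~\ref{Hstime234}.

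It remains to plug these in. When $\omega$ stays bounded (region \textbf{I}, where $d(x,t)\asymp\abs{x}\to\infty$), Theorem~\ref{HstimeI} gives $r_{1,0}=-y_\omega\cot y_\omega+O(\abs{x}^{-2})$, $r_{2,0}=y_\omega^2\cot^2y_\omega+O(\abs{x}^{-2})$, $r_{0,1}=-y_\omega+o(1)$, $r_{0,2}=y_\omega^2+o(1)$ (with the appropriate corrections from cases~2--6 of that theorem near $\omega=0$ and $\omega=\tfrac\pi2$), and the algebraic identity $\cot^2y_\omega+1=\sin^{-2}y_\omega$ collapses the above formula to $V=\tfrac1{16}\,d(x,t)^2+O(1)\to+\infty$. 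When $\delta\to0^+$ (which covers regions \textbf{II}, \textbf{III}, \textbf{IV}, and forces $\abs{t}\to+\infty$), Corollary~\ref{Hstime234} gives, using $\tilde I_\nu(z)=z^{-\nu}I_\nu(z)$, $\kappa=2\pi\abs{t}\delta$, $R=\pi\abs{t}\delta^2$ and $\rho(\delta)=1+O(\delta^2)$,
\[
r_{k_1,k_2}=(-1)^{k_2}\pi^{k_2}\,\delta^{-k_1}\,\frac{I_{n+k_1-1}\!\bigl(\kappa\rho(\delta)\bigr)}{I_{n-1}\!\bigl(\kappa\rho(\delta)\bigr)}\,(1+o(1));
\]
substituting and using the recurrence $I_{n-1}(z)-I_{n+1}(z)=\tfrac{2n}zI_n(z)$ to cancel the two terms of order $\abs{t}/\delta$, one finds
\[
V=\frac\pi4\,\abs{t}\left(2-\frac{I_n\!\bigl(\kappa\rho(\delta)\bigr)^2}{I_{n-1}\!\bigl(\kappa\rho(\delta)\bigr)^2}\right)+\frac{\pi^2}4R+o\bigl(\abs{t}+R\bigr)\ \geq\ \frac\pi4\,\abs{t}+o(\abs{t}),
\]
because $I_n<I_{n-1}$ on $(0,\infty)$; as $\abs{t}\to+\infty$ this also tends to $+\infty$. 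Since every sequence going to infinity in $G$ has a subsequence lying (eventually) in region \textbf{I} or in the regime $\delta\to0^+$, this establishes $V\to+\infty$ and hence the theorem.

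The genuinely laborious part is the last step. The main obstacles are: (i) deriving the displayed formula for $V$ --- the cancellations $\sum_jx_jv_j=0$ and the disappearance of $p_{1,1,1}$ come from the $H$-type structure and are indispensable; (ii) in region \textbf{I}, selecting the correct case of Theorem~\ref{HstimeI} (the expansions near $\omega=0$ with $k_2>0$ and near $\omega=\tfrac\pi2$ with $k_1>0$ are more involved) and checking that $\tfrac R2r_{2,0}-\tfrac R4r_{1,0}^2$ does not kill the quadratic growth --- this is exactly what produces $V=\tfrac1{16}d^2+O(1)$; and (iii) in the regime $\delta\to0^+$, verifying that after the Bessel cancellation the coefficient of $\abs{t}$ is positive, i.e.\ $2-(I_n/I_{n-1})^2>0$, which is the crux of the argument in cases \textbf{II}--\textbf{IV}.
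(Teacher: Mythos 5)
Your proof is correct and takes essentially the same route as the paper: conjugate (in the form sense) by $\sqrt{p_s}$ to reduce to a Schr\"odinger operator $\mathcal{L}+V_s$ with a confining potential, compute $V_1$ explicitly in terms of the radial partial derivatives $p_{1,k_1,k_2}$, and use the asymptotics from Sections~\ref{sec:m1}--\ref{sec:mgeneral} to show that $V_1\to+\infty$. The only genuine difference is the spectral-theoretic device at the end. The paper invokes Simon's theorem (Theorem~\ref{Simon1}): a potential bounded below whose sublevel sets have finite measure yields a self-adjoint extension of $\mathcal{L}+V$ with purely discrete spectrum, and then transfers this back to $\mathcal{L}^{p_s}$ through the unitary $U_s$ and essential self-adjointness. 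You instead derive the ground-state substitution identity directly for the quadratic form and invoke a Persson-type variational lower bound on $\inf\sigma_{\mathrm{ess}}$. Both are legitimate; the paper's route outsources the localization argument to Simon's result, while yours needs a Persson/Agmon theorem valid for sub-Laplacians on Carnot groups (true, but not quite as ``standard'' as you suggest; if you wanted to be self-contained you would need the IMS localization formula in the sub-Riemannian setting, or you could simply cite Simon as the paper does).

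Two small remarks. First, your intermediate computation of $\mathcal{L}p_1$ agrees with mine, namely $\mathcal{L}p_1 = -R(p_{1,2,0}+p_{1,0,2})-n\,p_{1,1,0}-\frac{(m-1)R}{\abs{t}}p_{1,0,1}$ (the paper prints a $+$ sign on the last term, which appears to be a typo; your $O(R/\abs{t})$ wisely sidesteps the sign). Second, your final formula
\begin{equation*}
V=\frac{\pi}{4}\,\abs{t}\left(2-\frac{I_n(\kappa\rho(\delta))^2}{I_{n-1}(\kappa\rho(\delta))^2}\right)+\frac{\pi^2}{4}R+o\bigl(\abs{t}+R\bigr)
\end{equation*}
is correct, and the Bessel recurrence $I_{n-1}-I_{n+1}=\tfrac{2n}{z}I_n$ does produce the claimed cancellation, \emph{provided} one remembers that $r_{1,0}=\delta^{-1}I_n/I_{n-1}\,(1+o(1))$ is \emph{positive} (not negative) in this regime --- which your general formula for $r_{k_1,k_2}$ with $(-1)^{k_2}$ and $k_2=0$ correctly implies. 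The paper instead writes the same quantity as $\frac{\pi\abs{t}}{4}\bigl(\tfrac{2n}{\kappa}I_nI_{n-1}+2I_{n-1}I_{n+1}-I_n^2\bigr)/I_{n-1}^2$ and argues positivity by showing $2I_{n-1}I_{n+1}-I_n^2$ is increasing; your $I_n<I_{n-1}$ argument is shorter and equally valid.
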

	
	Theorem~\ref{teo:discrspectr} is indeed due to Inglis~\cite{Inglis}, whose proof relies on super Poincaré inequalities. Instead, we reduce {matters to studying} a Schr\"odinger-type operator by conjugating $\mathcal{L}^{p_s}$ with the isometry $U_s\colon L^2(p_s) \rightarrow L^2$ defined by $U_s f=f\sqrt{p_s}$ (see e.g.~\cite{MetafunePallara, Carbonaro}). More precisely, we consider the operator $U_s\, \mathcal{L}^{p_s} \, U_s^{-1} \colon L^2 \rightarrow L^2$. Simple computations then lead to $U_s \mathcal{L}^{p_s}\, U_s^{-1}=\mathcal{L}+V_s$, where $V_s$ is the multiplication operator\footnote{With a slight abuse of notation, we do not distinguish between a multiplication operator by a function and the function itself.} given by the function 
	\[
	V_s= -\frac{1}{4}\frac{\abs*{\nabla_\mathcal{H}p_s}^2}{p_s^2} - \frac{1}{2}\frac{\mathcal{L}p_s}{p_s}=-\frac{1}{4}\frac{\sum_{j=1}^{2n} (X_j p_s)^2 }{p_s^2} { + \frac{1}{2}\frac{\sum_{j=1}^{2n} X_j^2 p_s}{p_s}.}
	\]
	The main ingredient of the proof is due to Simon~\cite[Theorem 2]{Simon1}. Given a potential $V$ and $M>0$, we define $\Omega_M \coloneqq \{ g\in G \colon \, V(g) \leq M \}$. {For a subset $E$ of $G$, we write $\abs{E}$ to denote its measure with respect to $\dd y$.}
	
	\begin{theorem}\label{Simon1}
		Let $V$ be a potential bounded from below such that $\abs*{\Omega_{M}}<\infty$ for every $M>0$. Then there exists a self-adjoint extension of $\mathcal{L} +V$ with purely discrete spectrum.
	\end{theorem}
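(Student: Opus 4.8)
I would obtain this as an adaptation to the sub-Laplacian $\mathcal{L}$ of the form-theoretic argument underlying~\cite[Theorem 2]{Simon1}, the point being to check that the two analytic ingredients it needs survive on the stratified group $G$. After subtracting $\inf V$ (which merely translates the spectrum and relabels the thresholds $M$) one may assume $V\ge 0$. The plan is to single out the \emph{Friedrichs extension} $A$ of $\mathcal{L}+V$ and to prove that $A$ has compact resolvent, equivalently purely discrete spectrum.

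Concretely, I would work with the closed non-negative form
\[
\mathfrak{q}(u)=\int_G\abs{\nabla_\mathcal{H} u}^2\,\dd y+\int_G V\abs u^2\,\dd y,\qquad
\Dom(\mathfrak{q})=\Bigl\{u\in\Dom(\mathcal{L}^{1/2})\colon \int_G V\abs u^2\,\dd y<\infty\Bigr\},
\]
which has $C_c^\infty(G)$ as a form core (a potential being locally integrable) and hence defines a non-negative self-adjoint operator $A$ extending $\mathcal{L}+V$ on $C_c^\infty(G)$. Endowing $\Dom(\mathfrak{q})$ with $\norm u_{\mathfrak{q}}^2=\mathfrak{q}(u)+\norm u_{L^2}^2$, one has the standard equivalence: $A$ has compact resolvent if and only if the inclusion $\Dom(\mathfrak{q})\hookrightarrow L^2(G)$ is compact. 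Thus the whole problem reduces to this compactness.

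To prove it, let $(u_k)$ be a sequence with $\norm{u_k}_{\mathfrak{q}}\le C$. Given $\varepsilon>0$ choose $M$ with $C^2/M<\varepsilon$; Chebyshev's inequality applied to the potential term gives, for every $k$,
\[
\int_{G\setminus\Omega_M}\abs{u_k}^2\,\dd y\le\frac1M\int_G V\abs{u_k}^2\,\dd y\le\frac{C^2}{M}<\varepsilon,
\]
so the $L^2$-mass of $u_k$ outside $\Omega_M$ is uniformly small; it therefore suffices, for each $M$, to extract a subsequence that is $L^2(\Omega_M)$-Cauchy, and then to diagonalise over $M\in\N$. Here I would use that $\abs{\Omega_M}<\infty$ together with: (i) the sub-elliptic Rellich--Kondrachov theorem, giving compactness of $\Dom(\mathcal{L}^{1/2})\hookrightarrow L^2(K)$ for every compact $K\subseteq G$, hence, after diagonalising over an exhaustion, a subsequence converging in $L^2_{\loc}$; and (ii) the Folland--Stein Sobolev embedding $\Dom(\mathcal{L}^{1/2})\hookrightarrow L^q(G)$ with $q=\tfrac{2Q}{Q-2}>2$, $Q$ the homogeneous dimension of $G$, so that by Hölder
\[
\norm{u_k}_{L^2(\Omega_M\setminus B(0,R))}\le\norm{u_k}_{L^q(G)}\,\abs{\Omega_M\setminus B(0,R)}^{\frac12-\frac1q}\xrightarrow[R\to\infty]{}0
\]
uniformly in $k$. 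Combining (i) and (ii) yields the desired $L^2(\Omega_M)$-Cauchy subsequence, hence the compactness of $\Dom(\mathfrak{q})\hookrightarrow L^2(G)$ and the purely discrete spectrum of $A$.

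The delicate point is ingredient (ii): a \emph{global} Sobolev inequality for $\mathcal{L}^{1/2}$ with exponent strictly above $2$ is what prevents mass from escaping to spatial infinity, the local Rellich property alone being useless when $\Omega_M$ is unbounded of finite measure. Both ingredients are classical on stratified groups; the remaining care concerns only the form framework ($C_c^\infty(G)$ a core for $\mathfrak{q}$, and $A$ genuinely extending $\mathcal{L}+V$), which is also why the conclusion only asserts the existence of \emph{some} self-adjoint extension with discrete spectrum.
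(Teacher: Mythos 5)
The paper does not prove this theorem; it is stated as an adaptation of \cite[Theorem 2]{Simon1} to the sub-Laplacian $\mathcal{L}$ and is cited without proof, so there is no in-paper argument to compare your proposal against. Your proof is correct, and it is the natural transport of Simon's form-theoretic compactness argument to a stratified group: Chebyshev confines the $L^2$-mass of a form-bounded sequence to $\Omega_M$; local sub-elliptic Rellich provides $L^2_{\loc}$-convergence along a subsequence; and the \emph{global} Folland--Stein inequality, with $q=2Q/(Q-2)>2$ since the homogeneous dimension is $Q=2n+2m\ge4$, is exactly what is needed to control the tail of $\Omega_M$ when it has finite measure but is unbounded. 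Two small remarks. First, the statement only assumes $V$ is bounded below; you are right to insert local integrability (needed for $C_c^\infty\subseteq\Dom(\mathfrak{q})$ and for the Friedrichs extension to be defined as the closure of the form on $C_c^\infty$), and this is automatic for the continuous potential $V_s$ to which the paper actually applies the result. Second, the ``diagonalisation over $M\in\N$'' is superfluous: once an $L^2_{\loc}$-convergent subsequence is extracted over an exhaustion by compacts, one verifies Cauchyness in $L^2(G)$ directly for a given $\varepsilon$ by choosing first $M$, then $R$, then an index threshold.
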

	In order to apply Proposition~\ref{Simon1}, some estimates of the potential are needed; this is done in the following proposition. 
	\begin{proposition} \label{stime:teo:2}
		When $(x,t)\rightarrow\infty$, $V_s(x,t)\asymp s^{-2}d(x,t)^2$ for every $s>0$.
	\end{proposition}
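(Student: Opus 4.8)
The plan is to reduce to the case $s=1$, rewrite $V_1$ in terms of the radial partial derivatives $p_{1,k_1,k_2}$, and then feed in the asymptotic expansions obtained in Sections~\ref{sec:m1} and~\ref{sec:mgeneral}.

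First I would scale: from the dilation invariance of $p_s$, of the vector fields $X_j$, and of the Carnot-Carathéodory distance one gets $V_s(x,t)=s^{-1}V_1(x/\sqrt s,\,t/s)$ and $d(x,t)^2=s\,d(x/\sqrt s,\,t/s)^2$, so it suffices to prove $V_1(x,t)\asymp d(x,t)^2$ as $(x,t)\to\infty$. Next, writing $\partial_{x_j}=\tfrac{x_j}{2}\partial_R$ and $\partial_{t_k}=\tfrac{t_k}{|t|}\partial_{|t|}$ and using the H-type identities $\sum_j x_j(J_{u_k}x,e_j)=0$ and $\sum_j(J_{u_k}x,e_j)(J_{u_l}x,e_j)=|x|^2\delta_{k,l}$, a short computation shows that for every smooth function $f$ of $(R,|t|)$ one has $\sum_{j=1}^{2n}(X_jf)^2=R\bigl(f_R^2+f_{|t|}^2\bigr)$ and $\mathcal Lf=-\bigl(n f_R+R f_{RR}+R f_{|t||t|}+R\tfrac{m-1}{|t|}f_{|t|}\bigr)$; applying this to $p_1$ gives, with $R=|x|^2/4$,
\[
V_1=-\frac R4\,\frac{p_{1,1,0}^2+p_{1,0,1}^2}{p_1^2}+\frac12\,\frac{n\,p_{1,1,0}+R\,(p_{1,2,0}+p_{1,0,2})+R\,\tfrac{m-1}{|t|}\,p_{1,0,1}}{p_1}.
\]

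I would then split $(x,t)\to\infty$ into case~\textbf{I} ($\omega$ bounded, whence $|x|\to\infty$ and $d(x,t)\to\infty$) and the complementary region $\delta\to0^+$ (cases~\textbf{II}, \textbf{III}, \textbf{IV}, where $|t|\to\infty$), inserting in each the appropriate estimates; in both regions the factor $e^{-d(x,t)^2/4}$ cancels out of every ratio $p_{1,k_1,k_2}/p_1$. In case~\textbf{I}, Theorem~\ref{HstimeI} (together with its refinements near $\omega=0$ and $\omega=\tfrac\pi2$) gives $p_{1,k_1,k_2}/p_1\to(-1)^{k_1+k_2}y_\omega^{k_1+k_2}(\cot y_\omega)^{k_1}$; since $d(x,t)^2=4R\,y_\omega^2/\sin^2 y_\omega$ and $R/|t|=1/\omega$, the combinations $R\,(p_{1,1,0}^2+p_{1,0,1}^2)/p_1^2$ and $R\,(p_{1,2,0}+p_{1,0,2})/p_1$ both equal $R\,y_\omega^2(1+\cot^2 y_\omega)+o(d^2)=\tfrac14 d(x,t)^2+o(d(x,t)^2)$, while $n\,p_{1,1,0}/p_1$ and $R\tfrac{m-1}{|t|}p_{1,0,1}/p_1$ are $O(1)$, so $V_1=\bigl(-\tfrac14+\tfrac12\bigr)\tfrac14 d(x,t)^2+o(d(x,t)^2)=\tfrac1{16}d(x,t)^2+o(d(x,t)^2)$. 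In the region $\delta\to0^+$ one has $d(x,t)^2=4\pi|t|(1+o(1))=\tfrac{2\kappa}{\delta}(1+o(1))$, and Corollary~\ref{Hstime234} together with $\tilde I_\nu(z)=z^{-\nu}I_\nu(z)$ yields $p_{1,k_1,k_2}/p_1=(-1)^{k_2}\pi^{k_2}\delta^{-k_1}\,\tfrac{I_{n+k_1-1}(\kappa)}{I_{n-1}(\kappa)}(1+o(1))$; putting $r=I_n(\kappa)/I_{n-1}(\kappa)\in(0,1)$, using $R=\kappa\delta/2$ and the recurrence $I_{n-1}(\kappa)-I_{n+1}(\kappa)=\tfrac{2n}{\kappa}I_n(\kappa)$, the four terms collapse to $V_1=\tfrac{\kappa}{8\delta}(2-r^2)(1+o(1))=\tfrac{2-r^2}{16}d(x,t)^2(1+o(1))$. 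In every case $V_1$ is, to leading order, a strictly positive constant (between $\tfrac1{16}$ and $\tfrac18$) times $d(x,t)^2$, which gives $V_1\asymp d(x,t)^2$; since this constant stays bounded away from $0$, $V_1$ is eventually positive, so $V_s$ is bounded below as well.

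The step I expect to be the main obstacle is ruling out a cancellation between the two $O(d^2)$ contributions to $V_1$, which have opposite signs: in case~\textbf{I} this is ensured by $\cos^2 y_\omega+\sin^2 y_\omega=1$, and in the region $\delta\to0^+$ by the Bessel recurrence, which is exactly what produces the strictly positive factor $2-r^2$ (note $0<r<1$ for $\kappa>0$). A secondary subtlety is case~\textbf{IV} ($|x|\to0$), where $p_1$ is not of the form ``slowly varying amplitude'' $\times\,e^{-d^2/4}$, so the heuristic $V_1\approx\tfrac14|\nabla_{\mathcal H}(d^2/4)|^2=\tfrac1{16}d(x,t)^2$ fails; there Corollary~\ref{Hstime234} still applies and produces the (different) value $V_1\sim\tfrac18\,d(x,t)^2$.
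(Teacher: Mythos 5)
Your proposal is correct and follows the paper's own strategy: scale to $s=1$, rewrite $V_1$ through the radial partial derivatives $p_{1,k_1,k_2}$, and insert the asymptotic expansions of Sections~\ref{sec:m1} and~\ref{sec:mgeneral}. Two points make the comparison worth spelling out.

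First, your $V_1$ carries a \emph{plus} sign on the term $R\,\tfrac{m-1}{|t|}\,p_{1,0,1}/p_1$, whereas the paper's displayed formula has a minus sign (equivalently, the paper's $\mathcal L p_1$ has $+\tfrac{R}{|t|}(m-1)p_{1,0,1}$). A direct computation of $\sum_j X_j^2$ on a function of $(R,|t|)$ confirms your sign, $\mathcal L f=-\bigl(n f_R + R f_{RR} + R f_{|t||t|} + R\tfrac{m-1}{|t|} f_{|t|}\bigr)$. Since this term is $O(1)$ in case~\textbf{I} and $O(\delta^2)$ for $\delta\to 0^+$, it is asymptotically irrelevant either way, so the paper's conclusion is unaffected; but the sign in the paper's display is a typo.

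Second, and more substantively, you handle the whole region $\delta\to0^+$ in one stroke via Corollary~\ref{Hstime234} and the Bessel recurrence $I_{n-1}(\kappa)-I_{n+1}(\kappa)=\tfrac{2n}{\kappa}I_n(\kappa)$, which collapses the three $\delta^{-1}$-terms into the single factor $2-r^2$ with $r=I_n(\kappa)/I_{n-1}(\kappa)\in(0,1)$, giving $V_1=\tfrac{2-r^2}{16}\,d(x,t)^2\,(1+o(1))$. The paper instead treats cases~\textbf{II}, \textbf{III}, \textbf{IV} separately from Theorems~\ref{HstimeII} and~\ref{HstimeIIIeIV}; in case~\textbf{III} it argues positivity of $2I_{n-1}I_{n+1}-I_n^2$ by a monotonicity observation, and its displayed combination has a small arithmetic slip ($\tfrac{2n}{\kappa}$ where $\tfrac{4n}{\kappa}$ should appear — with the correction the numerator factors exactly as $(2-r^2)I_{n-1}^2$, i.e.\ your identity). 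Your unified calculation is cleaner and makes the non-degeneracy transparent: the exact cancellation of $\tfrac{n}{2\delta}r$ against the recurrence term is what rules out any collapse, and the endpoint values $r\to1$ ($\kappa\to\infty$) and $r\to0$ ($\kappa\to0^+$) recover the paper's $\tfrac{\pi}{4}|t|$ and $\tfrac{\pi}{2}|t|$ without further case analysis. Your case~\textbf{I} argument, relying on $1+\cot^2 y_\omega=\csc^2 y_\omega$ so that $R\bigl[(p_{1,1,0}/p_1)^2+(p_{1,0,1}/p_1)^2\bigr]=\tfrac14 d^2+O(1)$, is the same in substance as the paper's and is correct.
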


	\begin{proof}
		Since $V_s(x,t)=\frac{1}{s}V_1\left( \frac{x}{\sqrt s},\frac{t}{s}\right)$, it will be sufficient to consider $V_1$ only. For every $(x,t)\in G$
		\begin{equation}\label{gradient}
			\begin{split}
				\abs{\nabla_\Hc p_1}^2(x,t)= R\, p_{1,1,0}(x,t)^2+R\, p_{1,0,1}(x,t)^2,
			\end{split}
		\end{equation}
		while
		\[
		\begin{split}
		\mathcal{L} p_1(x,t)&=-R\, p_{1,2,0}(x,t)-n \,p_{1,1,0}(x,t)- R\, p_{1,0,2}(x,t) { + \frac{R}{\abs{t}}(m-1)p_{1,0,1}(x,t)}.
		\end{split}
		\]
		Hence
		\[
		V_1=-\frac{R}{4} \frac{  p_{1,1,0}^2+p_{1,0,1}^2 }{p_{1,0,0}^2}+\frac{R}{2}\frac{ p_{1,2,0}+p_{1,0,2}+\frac{n}{R} {p_{1,1,0}} - \frac{m-1}{\abs{t}} p_{1,0,1} }{p_{1,0,0}}.
		\]
		In order to find the asymptotics for the potential, it turns out that only the principal term of $p_{1,k_1,k_2}$ is necessary, and therefore, for the sake of simplicity, we shall avoid an explicit treatment of the remainders. If one is interested in a more detailed description of the behaviour of the potential, however, it is enough to use the remainders that we found in the previous sections. 
		\smallskip
		
		{\bf I.} If $\omega$ runs through $[0,C]$ for some $C>0$, then {both} $ \frac{y_\omega}{\sin(y_\omega)} $ {and $\frac{y_\omega}{\omega}$ are positive and} bounded both from above and from below.  Hence,
		
		\[
		\begin{split}
		V_1 (x,t) &\sim-\frac{R}{4}\frac{y_\omega^{2} }{\sin(y_\omega)^{2}}+\frac{R}{2}\frac{y_\omega^{2} }{\sin(y_\omega)^{2}} =\frac{R}{4}\frac{y_\omega^{2} }{\sin(y_\omega)^{2}}\asymp d(x,t)^2
		\end{split}
		\]
		thanks to Theorem~\ref{HstimeI}.
		
		{\bf  II.} {Let} $\delta\to 0^+$ and $\kappa\to +\infty$. Then $\frac{1}{R\delta}=o\left(\frac{1}{\delta^2\sqrt \kappa} \right) $, and $\kappa+\frac{\abs*{t}}{\sqrt \kappa}=o(\abs*{t})$. Therefore, by Theorem~\ref{HstimeII},
		\[
		\begin{split}
		V_1 (x,t) 
		&\sim-\frac{R}{4}\frac{1}{\delta^2}+\frac{R}{2}\frac{1}{\delta^2} =\frac{\pi}{4}\abs*{t}\asymp  d(x,t)^2.
		\end{split}
		\]
		
		{\bf  III.} {Let} $\delta\to 0^+$ and {$\kappa \in [1/C,C]$} for some $C>1$. Then $\delta\asymp R$. Elementary computations yield
		\[
		I'_{\nu}(\zeta )=\frac{I_{\nu-1}(\zeta )+I_{\nu+1}(\zeta )}{2},\]
		{so that}
		\[
		(2 I_{\nu-1}I_{\nu+1}-I_\nu^2)'(\zeta )=I_{\nu-2}(\zeta )I_{\nu+1}(\zeta )+I_{\nu-1}(\zeta )I_{\nu+2}(\zeta )
		\]	
		for all $\nu\in \Z$ and for all $\zeta \in \C$. Thus, $2 I_{n-1}I_{n+1}-I_n^2$ is strictly increasing on $[0,\infty)$, hence strictly positive on $(0,\infty)$. Therefore, by Theorem~\ref{HstimeIIIeIV}
		\[
		\begin{split}
		V_1 (x,t)	&\sim-\frac{R}{4}\frac{\frac{1}{\delta^2}I_{n}(\kappa)^2
		}{I_{n-1}(\kappa)^2}+\frac{R}{2}\frac{\frac{1}{\delta^2}I_{n+1}(\kappa)
		+\frac{n}{R\delta} I_{n}(\kappa) 	
	}{I_{n-1}(\kappa)		}\\
	&=\frac{\pi\abs*{t}}{4}\frac{\frac{2 n}{\kappa}I_n(\kappa)I_{n-1}(\kappa)+  2 I_{n-1}(\kappa)I_{n+1}(\kappa)-I_n(\kappa)^2}{I_{n-1}(\kappa)^2}\asymp  d(x,t)^2.
	\end{split}
	\]
	
	{\bf IV.} Finally, {let} $\kappa\to 0^+$ and $ \abs{t}\to +\infty$. Then $\abs{t}=o\left(\frac{1}{R}\right)$, so that 
	\[
	\begin{split}
	V_1(x,t)	&\sim-\frac{R}{4} \frac{\frac{\pi^2}{(n!) ^2}\abs*{t}^{2}+\frac{\pi^2}{[(n-1)!] ^2}
	}{\frac{1}{[(n-1)!]^2}}+ \frac{R}{2}\frac{\frac{\pi^2}{(n+1)!}\abs*{t}^{2} +\frac{\pi^2}{(n-1)!}+ \frac{n\pi}{R\, n!} \abs*{t}+ \frac{(m-1)\pi}{(n-1)!\abs{t}}}{\frac{1}{(n-1)!}}\\
	&\sim\frac{\pi}{2}\abs*{t} \asymp d(x,t)^2,
	\end{split}
	\]		
	thanks to Theorem~\ref{HstimeIIIeIV} again.
 \end{proof}

\begin{remark}\label{remarknotenough}
	The estimates provided by Eldredge~\cite{Eldredge} are not sufficient to prove Proposition~\ref{stime:teo:2}, not even with some precise estimates of $\mathcal{L}p_1/p_1$. Indeed, as the proof above shows, in cases {\bf I}, {\bf II} and {\bf III} one has $\mathcal{L} p_1/p_1\asymp \abs{\nabla_\Hc p_1}^2/p^2_1$, so that no lower control of $V_1$ can be inferred. On the other hand, the upper bounds of the derivatives of $p_s$ explicitly provided by Li~\cite{Li3} are not enough to describe the behaviour at infinity of $V_s$.
\end{remark}

\begin{proof}[Proof of Theorem~\ref{teo:discrspectr}]
	Since $V_s$ is continuous and diverges at infinity by Proposition~\ref{stime:teo:2}, {the assumptions of Theorem~\ref{Simon1} are fulfilled and this ensures} the existence of a self-adjoint extension $(T_s,\mathscr{D}_s)$ of $(\mathcal{L}+V_s,C_c^\infty)$ with purely discrete spectrum.  Since the multiplication by the square root of $p_s$, which we called $U_s$, preserves $C_c^\infty$, $U_s^{-1}\mathscr{D}_s \supseteq C_c^\infty$; therefore, $(U_s^{-1} T_s U_s, U_s^{-1}\mathscr{D}_s)$ is a self-adjoint extension -- with purely discrete spectrum -- of $(\mathcal{L}^{p_s}, C_c^\infty)$, which is essentially self-adjoint. The result follows.
 \end{proof}

\section*{Acknowledgements}
We would like to thank Giancarlo Mauceri for his support and several helpful suggestions and encouraging discussions. We also thank the anonymous referee for his/her valuable comments on a preliminary version of this paper, which led us to state Theorem~\ref{HstimeI} in its present form.

\end{document}